\pdfoutput=1
\documentclass{amsart}
\usepackage{latexsym}
\usepackage{amsfonts}
\usepackage{graphicx}
\newcommand{\C}{\mathbb{C}}
\newcommand{\R}{\mathbb{R}}
\newcommand{\Q}{\mathbb{Q}}

\newcommand{\To}{\rightarrow}
\newcommand{\vp}{\varphi}
\newcommand{\comps}{{f_1,\ldots,f_n}}
\newcommand{\vars}{{x_1,\ldots,x_n}}

\theoremstyle{plain}
\newtheorem{Thm}{Theorem}
\numberwithin{Thm}{section}
\newtheorem{Cor}{Corollary}
\newtheorem{Lemma}[Cor]{Lemma}
\newtheorem{Prop}[Cor]{Proposition}

\newtheorem{Conj*}{Conjecture}

\theoremstyle{remark}
\newtheorem{Ex}{Example}

\newcommand{\J}{{J}acobian\ }
\newcommand{\p}{polynomial\ }
\newcommand{\pk}{{P}inchuk\ }
\newcommand{\nz}{nonzero\ }
\newcommand{\xyp}{$(x,y)$-plane\ }
\newcommand{\sa}{semi-algebraic\ }
\newcommand{\fe}{field extension\ }
\newcommand{\Druz}{Dru{\.z}kowski\ }

\begin{document}
\title{{P}inchuk maps, function fields, and
real {J}acobian conjectures}
\author{L. Andrew Campbell}
\address{908 Fire Dance Lane \\
Palm Desert CA 92211 \\ USA}
\email{landrewcampbell@earthlink.net}
\keywords{real rational map, {J}acobian conjecture, {G}alois case}
\thanks{2010 {\it Mathematics Subject Classification.}  
Primary 14R15; Secondary 14P10 14P15 14Q05}

\begin{abstract}

Jacobian conjectures (that nonsingular implies invertible) 
for rational everywhere defined maps of $\R^n$ to itself
are considered, with no requirement for a constant Jacobian determinant or a rational inverse. The associated extension of 
rational function fields must be of odd degree and must have
no nontrivial automorphisms. 
The extensions for the Pinchuk counterexamples 
to the strong real Jacobian conjecture
 have no nontrivial automorphisms, but are of degree six. 
The birational case is proved, the Galois case is clarified
but the general case of odd degree remains open. However, certain topological conditions are shown to be sufficient.  Reduction theorems
to specialized forms are proved. 

\end{abstract}

\maketitle

\tableofcontents

\section{Introduction}\label{intro}

The \J Conjecture (JC) asserts that a polynomial map $F: k^n \To k^n$,
where $k$ is a field of characteristic zero, has a polynomial inverse if its \J determinant, $j(F)$, is a \nz element of $k$.
The JC is still not settled for any $n > 1$ and any specific field $k$ of characteristic zero.
It is known, however, that if it is true for $k = \C$ and all $n > 0$, then it is true in every case.
As $j(F)$ is the determinant of the \J matrix of partial 
derivatives of $F$, it is polynomial, and so for $k = \C$ it is a \nz constant if, and only if, it vanishes nowhere on $\C^n$.
That suggested the Strong Real \J Conjecture (SRJC), which
asserts that a polynomial map $F: \R^n \To \R^n$,
has a real analytic  inverse if $j(F)$ vanishes nowhere on $\R^n$. 
However, Sergey {P}inchuk exhibited a family 
of counterexamples for $n=2$, 
now usually called {P}inchuk maps.

Say $F = (\comps)$, with each component a polynomial in $\vars$.
If $j(F)$ is not identically zero, the components of $F$ are algebraically independent over $k$,
so $k[F]=k[\comps]\subseteq k[X]=k[\vars]$ is an inclusion of polynomial algebras.
The extension of function fields $k(F)\subseteq k(X)$ is algebraic,
since both fields have transcendence degree $n$ over $k$,
and is finitely generated, hence
 of finite degree.
In the JC context with $j(F)$ a \nz constant, $F$ has a polynomial inverse,
and hence $k[F]=k[X]$, if
the extension $k(F)\subseteq k(X)$ is Galois,
in particular in the birational case $k(F)=k(X)$.
For $k=\R$ and the SRJC context, that yields polynomial
invertibility in both the birational and {G}alois cases if
$j(F)$ is a \nz constant; but apparently there are no published
invertibility results in either case
if $j(F)$ just vanishes nowhere on $\R^n$.

In section \ref{pkcx} the extension of function fields is investigated for a previously well studied {P}inchuk map. 
A primitive element is found, its minimal polynomial 
is calculated, and the degree ($6$) and automorphism group
(trivial) of the extension are determined. 
That generalizes to  any {P}inchuk map $F$ defined over
any subfield $k$ of $\R$. 
Although $F$ is generically two to one as a polynomial map of
$\R^2$ to $\R^2$, the degree of the associated 
extension of function fields  $k(F) \subset k(X)$ 
is $6$ and $k(X)$ admits no nontrivial automorphism 
that fixes all the elements of $k(F)$ (Theorem \ref{extension}). 
In particular, the extension is not {G}alois

Section \ref{conj} treats the more general case of 
real rational everywhere defined maps $F:\R^n \To \R^n$ with 
nowhere vanishing \J determinant and their associated function 
field extensions. 
If the extension is birational, then $F$ has an inverse of 
the same character as $F$ (Theorem \ref{bicase}). 
If it is {G}alois, then it is birational if $F$ has a real analytic 
inverse (Theorem \ref{galcase}). 
In addition, some knowm special  cases of the SRJC , involving topological conditions, 
are generalized to the rational context. 
Two necessary conditions for invertibility are found to apply to 
the extension: trivial automorphism group and odd degree. 
The degree parity restriction produces modified conjectures, 
in particular a new variant of the SRJC, with hypotheses that 
exclude the {P}inchuk counterexamples. 
Theorems \ref{reduce-1} and \ref{reduce-2} prove 
some reductions to special cases of the sort familiar in the 
ordinary JC context.

Section \ref{sup} is an appendix. It contains additional detailed 
information on the specific \pk map of section \ref{pkcx} 
that is not needed for the proofs there, but can be used to 
verify assertions about the map.

\section{Pinchuk maps}\label{pkcx}

Pinchuk maps are certain polynomial maps $F=(P,Q): \R^2 \to \R^2$
that have an everywhere positive Jacobian determinant $j(P,Q)$,
and are not injective \cite{Pinchuk}. The polynomial $P(x,y)$
is constructed by defining
$t=xy-1,h=t(xt+1),f=(xt+1)^2(t^2+y),P=f+h$.
The polynomial $Q$ varies for different Pinchuk maps,
but always has the form
$Q =q - u(f,h)$, where $q= -t^2 -6th(h+1)$ and
$u$ is an auxiliary polynomial in $f$ and $h$,
chosen so that
$j(P,Q) = t^2 + (t+f(13+15h))^2 + f^2$.

\subsection{A specific Pinchuk map}\label{specific}{\ \newline}

The specific \pk map used in this paper to investigate the
associated extension of function fields is one introduced by Arno van den Essen via an email to colleagues in June 1994. It is defined
\cite{ArnoBook} by choosing
\begin{equation}\label{ueq}
u =
170fh + 91h^2 + 195fh^2 + 69h^3 + 75fh^3+
\frac{75}{4}h^4.
\end{equation}
The total degree in $x$ and $y$ of $P$ is $10$ and that of $Q$ is $25$. 
The image, multiplicity  and asymptotic behavior of $F$
were studied in
\cite{PPR,Picturing,PPRErr,aspc}. Its asymptotic variety, $A(F)$,
is the set of points in the image plane that are finite limits of the value of $F$ along curves that tend to infinity  in the \xyp \cite{asymptotics,asympvals}. It may alternatively be defined as the set of points in the image plane that have no neighborhood with
a compact inverse image under $F$
\cite{notproper,realtrans,geometry}.
It is a topologically closed curve in the
image $(P,Q)$-plane and is the image of a real line under a bijective \p parametrization; Its Zariski closure has one additional point not on the curve, so it is a \sa variety, but not an actual real algebraic variety.
It is depicted below using
differently scaled $P$ and $Q$ axes. It intersects the vertical axis at $(0,0)$ and $(0,208)$. Its leftmost point is $(-1,-163/4)$,
and that is the only singular point of the curve.

\begin{figure}[ht]
\centerline{
\includegraphics[height=2in,width=6in]{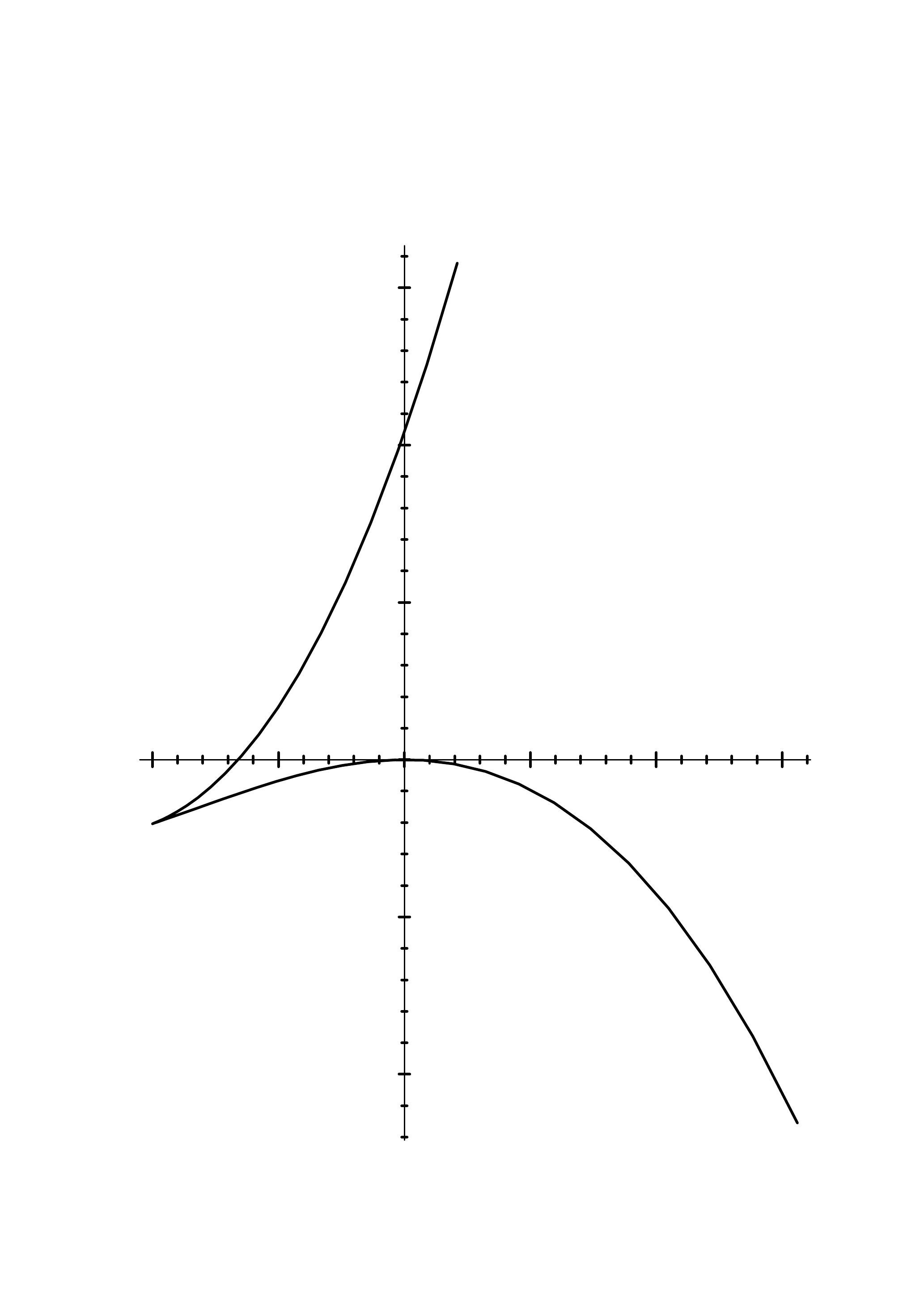}}
\caption{The asymptotic variety of the Pinchuk map $F$.}
\end{figure}

The points $(-1,-163/4)$ and $(0,0)$ of $A(F)$ have no inverse image under $F$, all other points of $A(F)$ have one inverse
image, and all points of the image plane not on $A(F)$ have two.

The inverse image of $A(F)$ under $F$ is the disjoint union
of three smooth curves, each of which is a topological
line that extends to infinity at both of its ends.
 The curves partition their complement in the \xyp
into four simply connected unbounded open sets.
Those regions are mapped homeomorphically to their images, two each to the regions on either side of $A(F)$.

\subsection{Minimal polynomial calculation}\label{ext}{\ \newline}

This paragraph is a summary of some key facts from previously cited work on $F$.
 A general level set $P=c$
in the $(x,y)$-plane has a rational parametrization.
Specifically, for any real $c$ that is not $-1$ or $0$,
the equations 
$$x(h) = \frac{ (c-h)(h+1) }{  (c-2h-h^2)^2 }$$
$$y(h) = \frac{ (c-2h-h^2)^2(c-h-h^2) }{ (c-h)^2 },$$
define a rational map pointwise on a real line with coordinate $h$, except where a pole occurs.
The use of $h$ as a parameter and the equality $P=c$ 
are consistent: on
 substitution into the
defining equations $t=xy-1,h=t(xt+1),f=(xt+1)^2(t^2+y),P=f+h$,
the expression  $h(x(h),y(h))$ simplifies to  $h$,
and $P(x(h),y(h))$ to $c$.
There is always a pole at $h=c$ and
$Q(x(h),y(h))$ tends to $-\infty$ as the pole
is approached from either side. 
Also, $Q(x(h),y(h))$ tends to $+\infty$ as $h$ tends 
to $+\infty$ and 
as $h$ tends to $-\infty$. 
If $c > -1$, there are two additional poles
at $h = -1 \pm \sqrt{1+c}$ and
$Q(x(h),y(h))$ tends to a finite asymptotic value at
each of these poles  as the pole
is approached from either side.
In that case
the asymptotic values are distinct and are the values of $Q$ at the two points of intersection of the vertical line $P=c$ and $A(F)$ in the $(P,Q)$-plane. 
The level sets $P=c$ are disjoint unions of 
their connected components, which are 
curves that are smooth (because of the \J condition) and tend 
to $\infty$ in the $(x,y)$-plane at both ends. The number of 
curves is two if $c < -1$, and four if $-1 < c \ne 0$. Even 
the two exceptional values fit this pattern, although they require 
different rational parametrizations, with $P=-1$ consisting of 
four curves and $P=0$ of five.

As a concrete illustration, consider the case $P=3$.
Detailed justifications are omitted.
The points of intersection of the vertical line $P=3$ and $A(F)$
are $a = (3,14965/4)$ and $b =(3, -4235/4)$.
The poles are at $h=-3$, $h=1$, and $h=3$.
As $h$ varies from $-\infty$ to $3$ 
the image point $F(x(h),y(h))$  moves down
the vertical line $P=3$
from infinity to $a$, skips $a$ because of the first pole,
continues down to $b$, skips $b$ at  the second pole, then
traces out the rest of the line to negative infinity
as the third pole is approached from below.
On the other side of the third pole the
entire line is retraced from
 negative infinity  to positive infinity without any skips.
That makes it obvious why $a$ and $b$ each have
exactly one inverse image in the $(x,y)$-plane.

$F$ is not birational, because it is generically two to one.
Throughout the remainder of this section, let $k=\R$.
To begin the exploration of the \fe $k(P,Q)\subset k(x,y)$,
rewrite the parametrization above in terms of $f$ and $h$,
using the relations  $P=c$ and $P=f+h$ to obtain
$$x =  f(h+1)(f-h-h^2)^{-2}$$
$$y = (f-h-h^2)^2(f-h^2)f^{-2},$$
which are identities in $k(x,y)$
(and so  $k(x,y) =  k(f,h)$).
It follows that
$xy = (h+1)(f-h^2)/f,
t  = xy -1 = [(h+1)(f-h^2)-f]/f
    = [fh-h^2-h^3]/f 
   = (h/f)[f-h(h+1)],
 q = -t^2-6th(h+1)
   = -h^2f^{-2}\{[f-h(h+1)]^2  + 6(h+1)[f-h(h+1)]f\}.$
In fact,
\begin{align*}
q &= -h^4(h+1)^2/f^2 + [2h^3(h+1)+6h^3(h+1)^2]/f
     + [-h^2 -6h^2(h+1)] \\
  &= -h^4(h+1)^2/f^2
   + h^3(h+1)(6h+8)/f
   -h^2(6h+7).
\end{align*}
Using that equation,  the definition $Q=q-u(f,h)$, and 
equation \ref{ueq} for $u$ one can express $Q$ in terms 
of $f$ and $h$ alone. 
Clearing denominators $f^2Q = f^2q - f^2u$, or,
arranged by powers of $f$, 
\begin{align*}
f^2Q &=  -h^4(h+1)^2 \\
      &+f [h^3(h+1)(6h+8)] \\
      &+f^2[-h^2(6h+7)-91h^2 -69h^3-(75/4)h^4] \\
      &+ f^3[-170h-195h^2-75h^3].
\end{align*}
Now substitute $P-h$ for $f$ and collect in powers of $h$
to obtain a \p relation
\begin{equation}\label{Rmin}
(197/4) h^6 + \cdots +(2PQ - 170P^3)h -P^2Q = 0.
\end{equation}
Let $R(T)$  be the corresponding polynomial in $T$ with root $h$.  
$R(T)$ is explicitly written out in full 
in the Appendix (section \ref{sup}). 
It is clear, even without
an explicit formula,  that the coefficient of each power of $T$ is a \p in $P$ and $Q$ with rational  coefficients,
and has  total degree in $P$ and $Q$ at most $3$.
Since the leading coefficient of $R(T)$ is a real constant,
the fact that $R(h) = 0$  shows that $h$ is
integral over $k[P,Q]$.

Let $m(T)$ be the polynomial in $k[P,Q][T]$, $ T$ an indeterminate,
which has leading coefficient $1$ and satisfies $ m(h) = 0$  in $k[x,y]$,
and which is of minimal degree. 
 Clearly $m$ is  irreducible in
$k[P,Q][T] = k[P,Q,T]$ and hence by the Gauss Lemma, in
$k(P,Q)[T]$. That implies that $m$ is also of minimal degree
over $k(P,Q)$, that $m$ divides any polynomial in $k[P,Q][T]$
with $h$ as a root, and that $m$ is unique.

Note the following $k$-linear field inclusions
$$k(P,Q)\subset k(P,Q)(h) = k(f,h) = k(x,y).$$
Next consider the k-algebra homomorphisms
$$k[P,Q]\subset k[P,Q][h] \subseteq k[x,y]$$
and the corresponding regular maps of affine real
algebraic varieties
$$k^2 \To\rm{Zeroset}(m) \To k^2$$
with the first map sending $(x,y)$ to $(P(x,y),Q(x,y),h(x,y))$
and the second the projection onto the first two components. 
The first map is birational ($k(P,Q)(h) = k(x,y)$) and the second
is finite (topologically proper with an overall bound on the
number of inverse images of points in the codomain) by
integrality. Incidentally, that shows that the inclusion
$k[P,Q][h] \subseteq k[x,y]$ is actually strict, since $F$
is not topologically proper.

If we fix any point $w$ in the$(P,Q)$-plane, we may consider $m$
as a polynomial in $T$ with real coefficients and real roots that
determine the points projecting onto $w$ under the second map.
So we call the algebraic surface $m=0$ the variety of real roots
of $m$ and if $m(w,r) =0$ we say that $r$ is a root of $m$
over $w$.
Note that for any point of the $(x,y)$-plane, $h(x,y)$ is a real
root of $m$ over $w=F(x,y)$.

\begin{Lemma}\label{gl}
For generic $w$, $m$ has exactly two real roots over $w$, they are simple and distinct, $w$ has exactly two inverse images $v$ and $v'$ under $F$, and the real roots of $m$ over $w$ are $r=h(v)$ and $r'=h(v')$.
\end{Lemma}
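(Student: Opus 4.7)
The plan is to combine two facts recalled above: that $F$ is generically two-to-one as a polynomial map of $\R^2$, and that the map $\phi\colon(x,y)\mapsto(P(x,y),Q(x,y),h(x,y))$ from $\R^2$ to $\mathrm{Zeroset}(m)\subseteq\R^3$ is birational. For $w$ in a Zariski open dense subset $W_0$ of the $(P,Q)$-plane, $F^{-1}(w)=\{v,v'\}$ with $v\neq v'$, and for each such preimage $h(v)$ is automatically a real root of $m(T,w)$, since $m(h)=0$ holds identically in $k[x,y]$. It therefore suffices to show that on a possibly smaller but still generic set these two real roots are distinct, are simple, and exhaust the real roots of $m(T,w)$.

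Distinctness is immediate from birationality: $\phi$ is injective on a Zariski open dense $V\subseteq\R^2$, so after shrinking $W_0$ so that $F^{-1}(W_0)\subseteq V$, the equality $h(v)=h(v')$ would give $\phi(v)=\phi(v')$ with $v\neq v'$, a contradiction. Simplicity follows from separability: $m$ is irreducible in $k(P,Q)[T]$ over a field of characteristic zero, so $m$ is separable and its discriminant $\Delta\in k[P,Q]$ is a nonzero polynomial; removing the zero set of $\Delta$ from $W_0$ guarantees that $m(T,w)$ has six distinct roots in $\C$, so in particular all of its real roots are simple.

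The delicate step, and the main obstacle, is proving that no additional real roots exist. For this, I use the birational inverse of $\phi$ explicitly: it is given by a pair of rational functions $X,Y\in k(P,Q,h)=k(\mathrm{Zeroset}(m))$ with real coefficients, defined off a proper Zariski closed subvariety $E\subseteq\mathrm{Zeroset}(m)$. Since $\dim E\leq 1$, the image of $E$ under the projection $\mathrm{Zeroset}(m)\to\R^2$ is contained in a proper algebraic subset $B$ of the $(P,Q)$-plane; shrink $W_0$ once more to exclude $B$. For $w$ in the resulting generic set and any real root $r$ of $m(T,w)$, the point $(w,r)$ is a real point of $\mathrm{Zeroset}(m)$ outside $E$, so $v'':=(X(w,r),Y(w,r))$ is a well-defined real point of $\R^2$, because $X,Y$ have real coefficients and their denominators do not vanish at $(w,r)$. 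Since $X,Y$ invert $\phi$, one has $F(v'')=w$ and $h(v'')=r$, so $v''\in F^{-1}(w)=\{v,v'\}$ and $r\in\{h(v),h(v')\}$. The heart of the argument is precisely this last reduction; it succeeds because the birational inverse has real coefficients and because the image of $E$ is of strictly smaller dimension than $\R^2$.
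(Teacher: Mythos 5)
Your argument is correct and follows the same strategy as the paper's proof: both rest on the birationality of $(x,y)\mapsto(P(x,y),Q(x,y),h(x,y))$ and on choosing $w$ outside the Zariski closure of the low-dimensional semi-algebraic bad loci of $F$ and of that birational correspondence, so that the real roots of $m$ over $w$ correspond bijectively to the two inverse images $v,v'$. The only noteworthy difference is the simplicity step: you invoke the nonvanishing of the discriminant of $m$ in $k[P,Q]$, which is the cleanest and most standard way to conclude the roots are simple, whereas the paper argues somewhat less directly via nonsingularity of the surface $m=0$ at the points $(w,r)$ and $(w,r')$.
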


\begin{proof}
Take a bi-regular isomorphism from a Zariski open subset $O$
of the $(x,y)$-plane to a Zariski open subset of the variety of real
roots of $m$. The image is a nonsingular surface $S$.
In the usual (strong) topology it has
a finite number of connected components that are open subsets of $S$ and of the variety of real roots.
Take the union of i) the image of the complement of $O$ under $F$, ii) the projection of the complement of $S$, and iii)   $A(F)$, the asymptotic variety of $F$.
From the Tarski-Seidenberg projection property  and other basic tools of real \sa geometry, the union is \sa  of maximum dimension $1$. Take $w$ in the complement of the Zariski closure of that union. Any root that lies over $w$ is a nonsingular point of the variety of real roots (by construction), and so is a simple, not multiple, real root.
There are exactly two points, say $v$ and $v'$, that map to $w$ under $F$. Their images under $h$, $r$ and $r'$, lie over $w$.
By construction $(w,r)$ and $(w,r')$ lie in $S$, and $v$ and $v'$ lie in $O$. Hence $r$ and $r'$ are distinct. No point in the complement of $O$ can map to $(w,r)$ or $(w,r')$
(by construction), and $v$ and $v'$ are the only points
in $O$ that do so.
\end{proof}

\begin{Cor}
The $T$-degree of $m$ is even.
\end{Cor}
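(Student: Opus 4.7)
The plan is to exploit Lemma \ref{gl} by counting the complex roots (not just the real ones) of the specialization $m(w,T)$ at a generic point $w$. Let $d$ denote the $T$-degree of $m$. Since $m \in k[P,Q][T]$ is monic in $T$, for every $w \in k^2$ the specialization $m(w,T)$ is a polynomial of degree exactly $d$ with real coefficients.

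The first step is to show that for generic $w$, $m(w,T)$ has $d$ distinct complex roots. Because $m$ is irreducible in $k(P,Q)[T]$ and the base field has characteristic zero, $m$ is separable, so its $T$-discriminant $\Delta(P,Q)$ is a nonzero element of $k[P,Q]$. Any $w$ at which $\Delta$ does not vanish makes $m(w,T)$ squarefree, hence possessing $d$ distinct complex roots. Intersecting the open set $\{\Delta\neq 0\}$ with the generic locus from Lemma \ref{gl} still leaves a Zariski dense subset of $k^2$.

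For any such doubly generic $w$, Lemma \ref{gl} supplies exactly two real roots of $m(w,T)$, so the remaining $d-2$ roots are non-real. Since $m(w,T)$ has real coefficients, those non-real roots are partitioned into complex conjugate pairs, forcing $d-2$ to be even; hence $d$ itself is even.

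I do not anticipate a serious obstacle. The only point meriting care is combining the two genericity conditions (nonvanishing of $\Delta$ and the conclusions of Lemma \ref{gl}), but each cuts out a proper closed \sa subset of $k^2$, so their union is still proper and the required generic $w$ exists.
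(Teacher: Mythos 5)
Your proof is correct and takes essentially the same approach as the paper: invoke Lemma~\ref{gl} to get exactly two real roots at a generic $w$, then observe that the non-real roots of the real polynomial $m(w,T)$ pair off under complex conjugation, forcing the degree to be even. The separability/discriminant detour you include is harmless but unnecessary, since conjugation preserves multiplicity, so the non-real roots come in conjugate pairs even when counted with multiplicity.
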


\begin{proof}
The complex roots over $w$ that are not real occur in
complex conjugate pairs.
\end{proof}

Let $m_0$ be the term of $m(T)$ of degree $0$ in $T$. Clearly, 
$m_0 \in k[P,Q]$is not the zero polynomial. Since $m(T)$  
divides $R(T)$, $m_0$ divides $P^2Q$. Since the $T$-degree 
of m is even, $m_0(w)$ is the product of all the roots, real and complex, of $m$ over $w$, for any $w$ in the $(P,Q)$-plane. 

\begin{Prop}
$m_0$ is a positive constant multiple of$ -P^2Q$.
\end{Prop}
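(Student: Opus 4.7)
The plan is to combine the divisibility $m_0 \mid P^2 Q$ noted above with divisibility from below, obtained from the vanishing locus of $m_0$, and then pin down the overall sign at one explicit test point. Since $m$ has even degree, $m_0(w)$ equals the product of all roots of $m$ over $w$, so $m_0(w) = 0$ precisely when some complex $F$-preimage of $w$ lies on $\{h = 0\} = \{t = 0\} \cup \{xt + 1 = 0\}$. On $\{xy = 1\}$ one has $t = h = 0$, $f = y$, $q = 0$, and $u(f,0) = 0$ (every monomial of $u$ contains $h$), so $F(\{xy = 1\}) = \{(y, 0) : y \in \R^{\times}\}$ is Zariski-dense in $\{Q = 0\}$. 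On $\{x^2 y = x - 1\}$ one has $h = 0$, $f = 0$, $q = -t^2$, $u = 0$, so $F(\{x^2 y = x - 1\}) = \{(0, -1/x^2) : x \in \R^{\times}\}$ is Zariski-dense in $\{P = 0\}$. Hence $m_0$ vanishes on $\{PQ = 0\}$, so $PQ \mid m_0$; combined with $m_0 \mid P^2 Q$ this leaves only $m_0 = cPQ$ or $m_0 = cP^2 Q$ for some nonzero $c \in k$.

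To rule out the first alternative I would analyze the order of vanishing along $\{P = 0\}$. The map $x \mapsto (x, (x-1)/x^2) \mapsto (0, -1/x^2)$ is two-to-one, since $x$ and $-x$ yield the same target but distinct preimages in $\{xt+1 = 0\}$. Hence over a generic $(0, Q_0)$ with $Q_0 < 0$, both real $F$-preimages lie on $\{xt+1 = 0\}$ and both have $h = 0$, making $0$ a double real root of $m(0, Q_0, T)$. Because $j(F) \neq 0$, each real root $r_i(P)$ of $m(P, Q_0, T)$ is smooth in $P$ near $P = 0$, with $r_i'(0) = j(h,Q)/j(F)$ evaluated at the corresponding preimage. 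A short gradient computation on $\{xt + 1 = 0\}$, using $xt = -1$ and $xy = 1 + t$ to simplify $\nabla h$ and $\nabla Q$, yields $j(h,Q) = -2 t^3 x$, which is nonzero for generic $x$. Each real root therefore vanishes to exactly first order in $P$, so $m_0(P, Q_0) = r_1 r_2 \prod |z_j|^2$ vanishes to order $2$ in $P$; this forces $P^2 \mid m_0$, and hence $m_0 = c P^2 Q$.

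Finally, the sign of $c$ is fixed at the test point $w = (3, Q_0)$ with $Q_0 > 14965/4$. By the level-set analysis of section \ref{specific}, the two real preimages' $h$-values lie one in $(-\infty, -3)$ and one in $(3, +\infty)$, so their product is negative; together with the positive contribution from complex-conjugate pairs of roots, this gives $m_0(w) < 0$. Since $P^2 Q = 9 Q_0 > 0$ there, we must have $c < 0$, proving $m_0$ is a positive multiple of $-P^2 Q$. The main obstacle is the multiplicity step, specifically confirming by direct gradient computation that $j(h, Q)$ is nonzero along $\{xt + 1 = 0\}$, since that is exactly what distinguishes $m_0 = cPQ$ from $m_0 = cP^2 Q$.
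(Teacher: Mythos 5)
Your proof is correct, but it takes a genuinely different route from the paper's in two of the three steps. For the divisibility $PQ \mid m_0$, the paper simply evaluates $m_0$ at the two explicit points $F(1,0)=(0,-1)$ and $F(1,1)=(1,0)$, where $h=0$; your Zariski-density argument via the curves $\{xy=1\}$ and $\{xt+1=0\}$ reaches the same conclusion but at the cost of identifying the full $h=0$ locus, which is heavier machinery for the same payoff (though it does give a cleaner conceptual picture of why $m_0$ vanishes where it does). For the crucial promotion to $P^2 \mid m_0$, the paper uses a pure sign argument: taking $w=(c,d)$ on two vertical lines, one with $2<c<4$ and one with $-4<c<-2$, with $d\to -\infty$, the two real roots approach $c$ from opposite sides, so the product of all roots is positive; the first line pins the sign of the numerical constant and the second forces the exponent of $P$ to be even. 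You instead carry out a multiplicity analysis along $\{P=0\}$, showing the two real roots $r_1(P), r_2(P)$ each vanish to exactly first order via the Jacobian $j(h,Q)$ on $\{xt+1=0\}$. That Jacobian computation (the step you flagged as the main obstacle) does come out to $j(h,Q)=-2t^3x=2/x^2 \neq 0$, so your argument goes through; the other four roots are bounded since $m$ is monic, so $m_0(P,Q_0)$ vanishes to order at least $2$ and hence exactly $2$ given $m_0 \mid P^2Q$. For the sign of the constant, your choice $w=(3,Q_0)$ with $Q_0>14965/4$ gives $h$-values in $(-\infty,-3)$ and $(3,+\infty)$, hence a negative product, whereas the paper takes $Q_0 \to -\infty$ with a positive product; both are consistent applications of the same product-of-roots idea. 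The paper's approach is more elementary (no derivative computation needed, and it handles the $P^2$-divisibility and the sign in one unified sweep over two test lines), while yours is more local and explains the exponent $2$ directly as an order of vanishing tied to the two-to-one cover of the negative $Q$-axis by $\{xt+1=0\}$.
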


\begin{proof}
$F(1,0)=(0,-1)$ and $h(1,0)=0$, so $m_0(0,-1)=0$. 
This  shows that $P$ must divide $m_0$, for otherwise
$m_0(0,-1)$ would be nonzero. Next, $F(1,1)=(1,0)$ and $h(1,1)=0$, so $m_0(1,0)=0$. So $Q$ divides $m_0$.
Next, consider the union of
the vertical lines $P=c$ in the $(P,Q)$-plane, for $2 < c< 4$.
At least one such line must contain a point $w$ that is generic
in the sense of Lemma \ref{gl}, for otherwise there would be an
open set of nongeneric points. Choose such a $c$, and note that
all but finitely many points of the line $P=c$ are generic.
The level set $P=c$ in the $(x,y)$-plane has the rational parametrization by $h$ already described, with a pole at $h=c$, at which $Q$ tends to $-\infty$. 
Take a point $w = (c,d)$ with $d$ negative and sufficiently large.
Then $w$ will be generic and its two inverse images under $F$ will
have values of $h$ that approach $c$ as $d$ tends to $-\infty$,
one value of $h$ less than $c$ and the other greater,
The product of all the
roots of $m$ over $w$ will be positive, since the nonreal roots
occur as conjugate pairs. Since $P$ is positive and $Q$ negative,
the numerical coefficient of $m_0$ must be negative, regardless of
whether $m_0$ is exactly divisible by $P$ or by $P^2$.
Finally, make a
similar argument for a suitable line $P=c$, with $-4 < c < -2$.
Consideration of signs shows that $m_0$ must be divisible by $P^2$,
which yields the desired conclusion.
\end{proof}

\begin{Cor}\label{not2}
The $T$-degree of $m$ is not $2$.
\end{Cor}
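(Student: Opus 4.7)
The plan is to rule out $\deg_T m = 2$ by contradicting the formula $m_0 = -c P^2 Q$ (with $c > 0$) just proved in the preceding proposition. If $\deg_T m = 2$, then for generic $w$ the monic quadratic $m(w,T)$ has only two roots over $w$, and by Lemma \ref{gl} these roots are exactly $h(v)$ and $h(v')$ for the two inverse images $v, v'$ of $w$ under $F$. Their product therefore equals $m_0(w) = -c\, P(w)^2 Q(w)$.

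I would next specialize to the vertical line $P = 3$ in the image plane and let $d \to -\infty$ along $w_d = (3,d)$. All but finitely many $w_d$ on this line are generic in the sense of Lemma \ref{gl}, so the identity $h(v)\,h(v') = m_0(w_d) = -9c\,d$ holds along a sequence $d_n \to -\infty$, and the right-hand side tends to $+\infty$.

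On the other hand, the rational parametrization of the level set $P=3$ has poles at $h = -3,\,1,\,3$; only the pole $h = 3$ yields $Q(x(h),y(h)) \to -\infty$, while the other two poles produce finite limits, namely the $Q$-coordinates of $a$ and $b$ where $P = 3$ meets $A(F)$. Using the explicit sweep-out description already given in section \ref{ext}, as $d \to -\infty$ one preimage has parameter $h$ approaching $3$ from below and the other from above, so $h(v)\,h(v') \to 9$. This bounded limit contradicts the unbounded $-9c\,d$, so $\deg_T m \neq 2$.

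The only step that calls for care is the identification of the limiting $h$-values of both preimages as $d \to -\infty$; I expect this to be the main obstacle \emph{if} one works abstractly, but it becomes routine once one invokes the concrete branch-by-branch traversal of the line $P = 3$ recorded in the preceding subsection and observes that, among the three poles, only $h = 3$ drives $Q$ to $-\infty$.
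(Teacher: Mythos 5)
Your proposal matches the paper's proof in substance: both exploit the factorization $m_0$ proportional to $-P^2Q$ from the preceding Proposition and the fact that, as $d \to -\infty$ along a vertical line $P=c$, the two real roots $h(v), h(v')$ both approach $c$ (one from each side), so their product stays bounded near $c^2$ while $m_0(w)$ is unbounded. The only minor point to be careful about is that the paper chooses a $c$ in $(2,4)$ guaranteed to have a line $P=c$ containing generic points (via the open-set argument in the Proposition's proof), rather than asserting this outright for $c=3$; your argument is otherwise identical.
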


\begin{proof}
If $m$ has degree $2$ and $w$ is generic, then $m_0(w)$ is exactly the product of the two real roots of $m$ over $w$.  But for the last two examples considered in the previous proof, the product tends to $c^2$ as $h$ tends to $c$, whereas $m_0(w)$ is unbounded.
\end{proof}

\begin{Prop}\label{R=cm}
$R(T) = (197/4)m(T)$.
\end{Prop}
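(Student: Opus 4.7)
The plan is to reduce the claim to showing $\deg_T m = 6$: then $m \mid R$ together with $\deg_T m = \deg_T R = 6$ forces $R/m$ to be a constant in $k(P,Q)$, and matching leading $T$-coefficients gives $R = (197/4) m$. The preceding two corollaries give $\deg_T m \in \{4, 6\}$; moreover $\deg_T m = [k(x,y) : k(P,Q)]$ equals the generic complex fiber size of the complexified rational map $F = (P, Q): \C^2 \to \C^2$ (which need not agree with the real fiber size $2$).

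I would produce at least $6$ distinct complex preimages of a generic $(c, d)$ from the rational parametrization of a $P$-level set used earlier in this section. For generic $c$, the map $h \mapsto (x(h), y(h))$ is birational onto its image (its left inverse is $h = h(x, y)$, as verified in the body), so distinct complex values of $h$ yield distinct $(x, y)$ with $P = c$. Hence the complex preimages of $(c, d)$ supplied by this parametrization are in bijection with the $h \in \C$ satisfying $Q(x(h), y(h)) = d$, and their number is the degree of $Q(x(h), y(h))$ as a rational function of $h$.

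Using the identity $f^2 Q = -h^4(h+1)^2 + f \cdot h^3(h+1)(6h+8) + f^2[\,\cdots\,] + f^3[\,\cdots\,]$ from the body of the section with $f = c - h$, I would check that $Q(x(h), y(h))$ equals a polynomial in $h$ of $h$-degree $6$ divided by $(c - h)^2$, with numerator coprime to $c - h$ generically in $c$ (evaluating the numerator at $h = c$ kills the last three summands and leaves $-c^4(c+1)^2 \neq 0$). Thus $Q(x(h), y(h))$ has degree exactly $6$ as a rational function of $h$, producing $6$ distinct complex preimages of generic $(c, d)$; combined with the upper bound $\deg_T m \leq 6$ coming from $m \mid R$, this gives $\deg_T m = 6$ and hence $R = (197/4) m$. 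The only substantive computation is checking that the $h^6$-coefficients of the four displayed summands do not cancel, i.e., $-1 - 6 - 75/4 + 75 = 197/4 \neq 0$; pleasingly, this value coincides with the leading $T$-coefficient of $R$, as it must.
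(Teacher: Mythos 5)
Your argument is correct, but it takes a genuinely different route from the paper's. The paper proceeds algebraically: having ruled out $\deg_T m = 2$, it supposes $\deg_T m = 4$, writes $R(T) = m(T)D(T)$ with $D$ quadratic, uses total-degree bookkeeping on the coefficients to force $D$ constant, and then derives a contradiction by setting $P = 0$ (where $R$ becomes $T^4\bigl((197/4)T^2 + 104T + 63\bigr) - QT^2$, so a constant quadratic $D$ could not divide the $-QT^2$ part). Your approach instead pins down the extension degree geometrically: you compute the degree of $Q(x(h),y(h))$ as a rational function of $h$ on a generic complex level curve $P = c$, using the identity $f^2 Q = \cdots$ already in the text with $f = c - h$, and find $h$-degree exactly $6$ in the numerator (with the numerator coprime to $c - h$ generically, via evaluation at $h = c$), giving a degree-$6$ map $\mathbb{P}^1 \to \mathbb{P}^1$. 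Since the parametrization is injective (left inverse $h$), this produces $\geq 6$ distinct complex preimages of a generic $(c,d)$; combined with the upper bound $\deg_T m \leq 6$ from $m \mid R$, and the identification of the generic complex fiber size with the degree of the extension, you get $\deg_T m = 6$ without any case analysis. The leading-coefficient check $-1 - 6 - 75/4 + 75 = 197/4$ is a nice consistency confirmation. The upside of your approach is that it computes the degree directly, exploits the level-set parametrization already assembled in the section, and makes transparent why the degree is $6$ rather than just ruling out $4$; the upside of the paper's argument is that it is purely polynomial-algebraic, never leaving $k[P,Q][T]$, and does not require invoking the equality of field-extension degree and generic complex fiber count (a standard fact, but one the paper never cites). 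One small point worth making explicit in your write-up: the $6$ distinct values of $h$ must also avoid the finitely many poles of the parametrization (zeros of $c - 2h - h^2$, where cancellation in the $f,h$-expression for $Q$ might hide the undefinedness of $x(h)$); genericity of $d$ handles this, but it deserves a word.
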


\begin{proof}
As $m(T)$ is a nonconstant divisor of $R(T)$ of even $T$-degree not
equal to $2$, it remains only to show that the degree of
$m$ in $T$ is not $4$.
Suppose to the contrary that
$(m_4T^4+m_3T^3+m_2T^2+m_1T+m_0)
(d_2T^2+d_1T+d_0) = R(T)$,
where the first factor is $m(T)$, the second is a polynomial $D(T)$
 of degree $2$ in $T$,  and all the coefficients shown are in
$k[P,Q]$.
Equating leading and constant terms on both sides, one
finds that $m_4=1$, $d_2=197/4$ and $d_0$ is a positive constant.
The coefficient $d_1$ must also be constant. For if not,
$j = \deg^t(d_1) > 0$, where $\deg^t$ temporarily denotes the total degree in $P$ and $Q$.
As noted earlier, that $\deg^t$ is at most $3$
for every coefficient of $R(T)$.
Starting with $\deg^t(m_0)=3$ and equating in turn terms of
$T$-degree $1$ through $4$ on both sides of the equation
assumed for $R(T)$, one
readily finds that $\deg^t(m_4)=3+4j$. But $m_4=1$, a contradiction.
Thus $D(T)$ has constant coefficients. Next, set $P=0$ in $R(T)$,
obtaining $(197/4)T^6 + 104T^5 + 63T^4 - QT^2$. That result
can be found easily by setting $f=-h$ in the rational equation
for $Q$ in terms of $f$ and $h$. Further setting $Q=0$, one finds that the resulting \p in $T$ alone 
factors as $T^4((197/4)T^2 + 104T+ 63)$. Clearly $D$ must
be exactly the quadratic factor shown. But if $D(T)$  divides $R(T)$,
setting $P=0$ implies it must also divide $-QT^2$, which is absurd.
That contradiction shows that the original assumption to the contrary, that $m$ has $T$-degree $4$, is false.
\end{proof}

\begin{Cor}\label{six}
The \fe $k(P,Q) \subset k(x,y)$ is of degree $6$.
\end{Cor}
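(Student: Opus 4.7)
The plan is to deduce this immediately from Proposition \ref{R=cm} together with the identification of a primitive element already established in the preceding discussion. The key observation to record is that $h$ serves as a primitive element for the extension: the excerpt already notes the chain of inclusions
$$k(P,Q) \subset k(P,Q)(h) = k(f,h) = k(x,y),$$
where the middle equality follows from the rational expressions $x=f(h+1)(f-h-h^2)^{-2}$ and $y=(f-h-h^2)^2(f-h^2)f^{-2}$, and the first from $P=f+h$. So the degree of the extension $k(P,Q) \subset k(x,y)$ equals the degree of the minimal polynomial of $h$ over $k(P,Q)$.

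First I would recall that $m(T) \in k(P,Q)[T]$ is by construction the (monic) minimal polynomial of $h$ over $k(P,Q)$; this was explicitly stated when $m$ was introduced. Its $T$-degree is therefore the degree $[k(x,y):k(P,Q)]$. Next, by Proposition \ref{R=cm}, $R(T) = (197/4)m(T)$, so $m$ and $R$ have the same $T$-degree. Finally, inspection of the relation \ref{Rmin} (or of the explicit expansion of $R$) shows that $R(T)$ has $T$-degree exactly $6$. Combining these three facts yields $[k(x,y):k(P,Q)] = 6$.

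There is essentially no obstacle here; the work has already been done in Proposition \ref{R=cm} and in the computation producing $R(T)$. The only thing to emphasize in the write-up is the role of $h$ as a primitive element, so that the degree of the minimal polynomial is actually the degree of the field extension rather than merely an upper bound.
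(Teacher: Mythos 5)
Your proposal is correct and matches the paper's intent: the paper's proof is simply "Clear," relying on exactly the facts you assemble — that $h$ is a primitive element via $k(P,Q)(h)=k(f,h)=k(x,y)$, that $m$ is its minimal polynomial, and that Proposition \ref{R=cm} pins down $\deg_T m = \deg_T R = 6$. You have just made explicit the reasoning the paper leaves implicit.
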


\begin{proof}
Clear.
\end{proof}

\subsection{Automorphisms of the extension}\label{aut}{\ \newline}

This section examines automorphisms of the extension 
$k(P,Q)\subset k(x,y)$; that is, field automorphisms of 
$k(x,y)$ that fix every element of $k(P,Q)$. Again, assume 
throughout that $k=\R$. 

First, consider the geometry over $\R$. 
Let $Z=F^{-1}(A(F))$. 
For any $(x,y)\notin Z$ there is a unique different point 
$(x',y')\notin Z$ with the same image under $F$. 
 There is an obvious Klein four group 
$\{e,\sigma,\sigma',\tau\}$ 
of $F$-preserving 
transformations of the complement of $Z$, where 
$e$ is the identity map, 
$\sigma$ interchanges inverse images over points that lie 
on one side of $A(F)$, 
$\sigma'$ is defined similarly for the other side of $A(F)$, 
and $\tau=\sigma\sigma'=\sigma'\sigma$ 
leaves no point invariant, always interchanging $(x,y)$ 
and $(x',y')$. These geometric involutions are Nash 
diffeomorphisms, 
that is, they are real analytic and semi-algebraic. Except for the identity map 
$e$, these transformations cannot be even locally extended 
analytically to any point $z \in Z$. For otherwise, $z$ would be 
a fixed point, and since $F$ is a local diffeomorphism the map 
would be the identity map on a neighborhood of $z$, thus over 
both sides of $A(F)$.

Next, the algebra. Suppose $\varphi$ is an automorphism of 
$k(x,y)$ that is not the identity but 
fixes every element of $k(P,Q)$. 
If $\vp$ preserves $h$, then it also preserves $x$ and $y$, since 
they are rational functions of $P$ and $h$, namely 
\begin{equation}\label{ratexp}
x= \frac{ (P-h)(h+1) }{  (P-2h-h^2)^2 }
\text{  and  }
y= \frac{ (P-2h-h^2)^2(P-h-h^2) }{ (P-h)^2 }.  
\end{equation}
So $\vp(h)=h'\ne h$ and, furthermore, by the identity
principle for rational functions over $\R$, they cannot be equal on any nonempty open subset of the $(x,y)$-plane on which $h'$ is defined. 
Since
$\vp$ preserves both components of $F$, the fact that its geometric realization cannot be the identity even locally means that it must be $\tau$ (see above) wherever both are defined. 
That implies that there can be at most one such a nonidentity 
automorphism $\vp$. If it exists, then
the rational function $h'$ is analytic at any point 
$(x,y) \notin Z$, since $h'(x,y)=h(x',y')=h(\tau(x,y))$.

That reduces the question of the existence of $\vp$ 
to the following one. 
Is $h'$, a well defined 	real analytic function on the complement of $Z$ in the $(x,y)$-plane, in fact a real rational function?

\begin{Lemma}\label{existence}
There are three component curves of the level set $P=0$ on 
which $h$ is nonconstant and vanishes nowhere. 
On those curves 
$Q=Q(h)=h^2((197/4)h^2+104h+63)$ and is
everywhere positive. 
There is one point of $Z=F^{-1}(A(F))$ on the three curves. 
At all of their other points, $h'$ satisfies both 
$h' \ne h$ and $Q(h')=Q(h)$. 
\end{Lemma}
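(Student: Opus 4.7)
The plan is to exploit the factorization $P = (xt+1)\bigl[(xt+1)(t^2+y)+t\bigr]$ to split the level set $P = 0$ into two pieces. The factor $xt+1 = 0$ is equivalent to $y = (x-1)/x^2$ (which forces $x \ne 0$) and so contributes two components, one for $x < 0$ and one for $x > 0$; on both, $h = t(xt+1) = 0$ identically, and a direct computation with $t = -1/x$ gives $Q = -t^2 - u(0,0) = -1/x^2 < 0$. All remaining components of $P = 0$ arise from the second factor, which, since $f = (xt+1)^2(t^2+y)$, is equivalent to $f = -h$. The paper records that $P = 0$ has exactly five components, so the remaining three are precisely those on which $h$ does not vanish; on each, $h$ is nonconstant because it serves as a parameter in the specialization at $c = 0$ of the parametrization from section \ref{ext}, whose poles at $h = -1 \pm \sqrt{1+c}$ collapse onto $h = 0$ together with a remaining pole at $h = -2$, yielding the three parameter intervals $(-\infty, -2)$, $(-2, 0)$, $(0, \infty)$.

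For the explicit form of $Q$ on the three curves, I would observe that the polynomial identity $R(h) = 0$ from section \ref{ext} specializes at $P = 0$ (using the formula recorded in the proof of Proposition \ref{R=cm}) to $h^2\bigl[(197/4)h^4 + 104h^3 + 63h^2 - Q\bigr] = 0$. Since $h \ne 0$ on the three curves, this gives $Q = h^2\bigl((197/4)h^2 + 104h + 63\bigr)$, and the quadratic factor has discriminant $104^2 - 4(197/4)(63) = -1595 < 0$, so it is everywhere positive and $Q > 0$ throughout. For the $Z$-intersection claim I would recall from section \ref{specific} that $A(F) \cap \{P=0\} = \{(0,0),(0,208)\}$, that $(0,0)$ has no preimage, and that $(0,208)$ has exactly one preimage; since $Q < 0$ on the two $h = 0$ components, that preimage cannot lie there and so must lie on the three nonconstant-$h$ curves, providing the single $Z$-point asserted.

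Finally, at any other point $(x,y)$ on the three curves, the partner $(x',y') = \tau(x,y)$ satisfies $P(x',y') = 0$ and $Q(x',y') = Q(x,y) > 0$, so by the same sign dichotomy $(x',y')$ must again lie on one of the three curves. Thus $h' = h(x',y')$ is nonzero, and the polynomial formula just derived yields $Q(h') = Q(h)$. To rule out $h' = h$ I would invoke the rational expressions in equation \ref{ratexp} for $x$ and $y$ in terms of $P$ and $h$: on the three curves $h \notin \{0, -2\}$, so at $P = 0$ the denominators $(P-2h-h^2)^2$ and $(P-h)^2$ do not vanish, hence $(P,h)$ determines $(x,y)$, and $h' = h$ would force $(x',y') = (x,y)$, contradicting that they are distinct partners. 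The main obstacle I anticipate is the component bookkeeping at $c = 0$ --- reconciling the degenerate parametrization with the five-component fact so as to match the three $f = -h$ components to the three parameter intervals --- rather than any of the subsequent algebraic or sign computations, which are routine.
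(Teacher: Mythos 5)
Your proof is correct, and the core of the argument coincides with the paper's: specialize equation \ref{ratexp} at $P=0$ to parametrize the three curves by $h\in\R\setminus\{0,-2\}$, set $P=0$ in $R(h)=0$ to obtain the formula for $Q$, check that the quadratic factor has negative discriminant, and use the facts from section \ref{specific} about $A(F)$ on the $Q$-axis to locate the single $Z$-point. Where you differ is the opening: you factor $P=(xt+1)\bigl[(xt+1)(t^2+y)+t\bigr]$ to peel off the two $h\equiv 0$ components and then invoke the five-component count, whereas the paper sidesteps this entirely by observing that the three $h$-parametrized loci are already closed, smooth, unbounded curves, and every connected component of a level set is such a curve, so each parametrized locus must be a whole component. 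That observation dissolves the ``component bookkeeping at $c=0$'' worry you flag at the end, so your factorization, while correct and illuminating, is not actually needed. You are also more explicit than the paper on the final clause $h'\ne h$: you deduce it from the injectivity of $(P,h)\mapsto(x,y)$ away from $h\in\{0,-2\}$, while the paper leaves it implicit in the strict monotonicity of $Q(h)$ on each of $h<0$ and $h>0$, which forces the two $h$-values over any positive $Q$ to have opposite signs. Both treatments are sound; yours buys a cleaner justification of the last sentence at the cost of a detour through the factorization and the component count.
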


\begin{proof}
Set $P=0$ in equation \ref{ratexp}. The resulting
 rational functions $x(h),y(h)$ are defined everywhere
except at $h=-2$ and $h=0$. 
That yields three curves parametrized by $h$.
Since $h(x(P,h),y(P,h))$ simplifies to $h$ for the rational 
functions in equation \ref{ratexp}, it follows that 
$h(x(h),y(h))=h$, and hence $h$ assumes every real value 
exactly once on these curves, except that $-2$ and 
$0$ are never assumed. 
Since every level set $P=c$ is a finite disjoint union of 
closed connected smooth curves unbounded at both ends, each of 
the three curves is a connected component of $P=0$.

Next, set $P=0$ in equation \ref{Rmin} of section \ref{ext},
 which is 
the relation $R(h)=0$, where $R(T)$ is 
that section's minimal degree, 
but not monic, polynomial with root $h$. 
The result, which in essence already appeared in the proof 
of Proposition \ref{R=cm}, is 
$(197/4)h^6+104h^5+63h^4-Qh^2=0$. 
On the curves, $h \ne 0$, so the claimed formula for 
$Q$ follows. 
Furthermore, $Q$ is positive there, 
since $(197/4)h^2+104h+63$ has negative 
discriminant.

So $F$ maps points on the three curves to the positive $Q$-axis. 
Routine calculation of the derivative of $Q$ shows that $Q$ is 
monotonic decreasing for $h<0$ and monotonic 
increasing for $h>0$. 
Considering the graph of $Q$, one concludes that every positive 
real is the value of $Q$ exactly twice, for \nz values of 
$h$ of opposite signs. Those values of $h$ all correspond to 
unique points of the curves, except $h=-2$. As $Q(-2)=208$, 
the point $(0,208)$, which is the only point of $A(F)$ on 
the positive $Q$-axis, has as its unique inverse under $F$ the 
point $(x(h'),y(h'))$, where $h'$ is the positive real 
satisfying $Q(h')=208$. 
\end{proof}

Remark. To clarify, 
there are two additional component curves of the level set 
$P=0$. On them $h=0$ identically. They have a rational 
parametrization by $t$ with a pole at $t=0$ and $Q=-t^2$ is
everywhere negative on them. They contain no point of $Z$, 
$t'=-t$, and $h'=0$.

\begin{Lemma}\label{irrational}
Let $h'$ be any real rational function of $h$ that 
satisfies $Q(h')=Q(h)$ for infinitely many values 
of $h \in \R$. Then $h'=h$. 
\end{Lemma}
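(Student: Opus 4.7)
The plan is to first note that since $Q(h')=Q(h)$ holds on an infinite subset of $\R$ and both sides are rational functions of $h$, the equality is an identity in $\R(h)$. Writing $h'=u/v$ with $u,v\in\R[h]$ coprime, the formula
\[ Q(u/v) = \frac{u^2\bigl((197/4)u^2+104uv+63v^2\bigr)}{v^4} \]
must equal the polynomial $Q(h)\in\R[h]$, so $v^4$ must divide the numerator in $\R[h]$. Since $\gcd(u,v)=1$ and the quadratic $(197/4)u^2+104uv+63v^2$ reduces modulo $v$ to $(197/4)u^2$, both factors of the numerator are coprime to $v$; hence $v$ is a nonzero constant and $h'\in\R[h]$.

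Next, matching degrees in $Q(h')=Q(h)$ forces $\deg h'=1$, so $h'=ah+b$ for some $a,b\in\R$. Matching the leading coefficient of $h^4$ yields $a^4=1$, i.e.\ $a=\pm 1$. The case $a=1$ is immediate: the coefficient of $h^3$ in $Q(h+b)-Q(h)$ is $197b$, which must vanish, giving $b=0$ and hence $h'=h$.

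The main obstacle will be ruling out the case $a=-1$. Here I plan to argue as follows. The relation $Q(-h+b)=Q(h)$ would force the degree-four polynomial $Q$ to be symmetric about $h=b/2$, so that $r$ is a root of $Q$ with multiplicity $m$ if and only if $b-r$ is a root with the same multiplicity. Since $Q(h)=h^2\bigl((197/4)h^2+104h+63\bigr)$ and the quadratic factor has discriminant $104^2-4\cdot(197/4)\cdot 63=10816-12411<0$, the only real root of $Q$ is $h=0$, occurring with multiplicity two. Applying the symmetry to that double root forces $h=b$ to be another real double root of $Q$, hence $b=0$; but then $Q(-h)\ne Q(h)$ because of the odd-degree term $104h^3$. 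This contradicts the assumption $a=-1$, so $h'=h$ as claimed.
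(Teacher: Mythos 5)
Your proof is correct, and it takes a genuinely different (and arguably cleaner) route than the paper's.

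The paper writes $h'=a/b$ with $a,b$ coprime of degrees $r,s$, shows $r=s+1$ from degree considerations, then carries out a fairly intricate analysis of the real and complex roots on both sides of the cleared-denominators identity: it shows $h\mid a$, that $b$ has only complex roots so $s$ is even, and that counting roots forces $s\le 1$, hence $s=0$; this gives $h'=\lambda h$ with $\lambda\ne 0$ (using $h\mid a$ to kill the constant term), then $|\lambda|=1$ and $\lambda\ne -1$ via $Q(h)-Q(-h)=208h^3$.

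You instead observe that $Q(u/v)=u^2\bigl((197/4)u^2+104uv+63v^2\bigr)/v^4$ must equal a polynomial, and that the entire numerator is coprime to $v$ (since $\gcd(u,v)=1$ makes $u^2$ coprime to $v$, and the quadratic factor is $\equiv(197/4)u^2\pmod v$, also coprime to $v$). So $v^4$ divides something coprime to it, forcing $v$ constant and $h'$ polynomial immediately — this replaces the paper's root-counting with a one-line UFD argument. From there, degree and leading-coefficient matching give $h'=ah+b$ with $a=\pm1$; the coefficient of $h^3$ handles $a=1$, and the symmetry-of-roots argument (using that $0$ is the unique real root of $Q$ and a nonzero $b$ would produce another) handles $a=-1$, finishing with the $104h^3$ term just as the paper does. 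The two proofs establish the same intermediate structure ($h'$ affine linear, then equal to $h$), but your path to ``$h'$ is a polynomial'' is more direct, at the cost of then needing to rule out a nonzero constant term separately, which the paper sidesteps by having already shown $h\mid a$. Both are correct; yours is more elementary in its first half.
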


\begin{proof}
Suppose $h'=a/b$ for polynomials 
$a,b \in \R[h]$, of respective degrees $r,s$, with no common 
divisor. From $Q(h')=Q(h)$ one obtains 
\begin{equation}\label{abeq}
a^2((197/4)a^2+104ab+63b^2)=b^4h^2((197/4)h^2+104h+63),
\end{equation}
a polynomial equality that is true for all real $h$.  
Since $a/b$ tends to $\infty$ as $h$ does, 
$r>s$. Counting degrees $4r=4s+4$, so $r=s+1$. 
The factor in parentheses on the left is quadratic and 
homogeneous in 
$a$ and $b$ and has negative discriminant, so it is zero for real 
$a$ and $b$ only if both are zero. But that cannot occur for 
any real $h$, for then $a$ and $b$ would have a common root, 
hence a common divisor. 
Thus that factor has only complex roots. It follows that 
$h^2$ divides $a^2$, and so $h$ divides $a$. 
That means that $a$ and $bh$ share a root, each having 
$h$ as a factor. As the quadratic factor in parentheses on the 
right is not zero for any real $h$, any further real roots 
(at $h=0$ or not) in the two sides of equation \ref{abeq}
would be shared by $a$ and $b$. Again, that is not possible, 
and therefore $a$ has no further real roots and all the roots of 
$b$ are complex. In particular $s$, the degree of $b$, must 
be even. No complex root of $b$ can be a root of $a$, as that 
would imply a common real irreducible quadratic factor. 
So it must be a root of the parenthetical factor 
on the left. Counting roots with multiplicities, 
$2r=2s+2 \ge 4s$, so $s \le 1$. Since $s$ is even, it 
must be $0$, and so $h'=\lambda h$ for a 
\nz $\lambda \in \R$. 
Then for any fixed $h \ne 0$, $Q(\lambda^ih)=Q(h)$ is 
independent of $i>0$, and so $\lambda$ has absolute value 
$1$. It cannot be $-1$, because $Q(h)-Q(-h)=208h^3$. 
\end{proof}

\begin{Prop}\label{notg}
The group of automorphisms 
of the  \fe $k(P,Q) \subset k(x,y)$ is trivial. 
In particular, the extension is not {G}alois. 
\end{Prop}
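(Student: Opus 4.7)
The plan is to combine Lemmas \ref{existence} and \ref{irrational} with the reduction already carried out earlier in this subsection, which showed that any nontrivial automorphism $\varphi$ of $k(x,y)/k(P,Q)$ is unique if it exists and is equivalent to the function $h' = h\circ\tau$, a priori only real analytic off the curve $Z$, being a genuine rational function of $x$ and $y$. I would argue by contradiction, assuming such a $\varphi$, hence such a rational $h'$, exists.

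The first key step is to restrict the hypothesized rational function $h'(x,y)$ to the three component curves of the level set $P=0$ identified in Lemma \ref{existence}. Each of these curves is rationally parametrized by $h$ via (\ref{ratexp}) with $P=0$, so composing the parametrization with $h'(x,y)$ yields $h'$ as a rational function of the single real variable $h$. The second step is to observe that Lemma \ref{existence} supplies precisely the input required by Lemma \ref{irrational}: on these curves $Q(h')=Q(h)$ holds identically, and $h'\ne h$ at every point except one, hence at infinitely many values of $h$. Applying Lemma \ref{irrational} forces $h'=h$ identically as a rational function of $h$, in direct contradiction with $h'\ne h$ off one point of the curves. That rules out a nontrivial $\varphi$ and gives triviality of the automorphism group.

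The ``in particular'' clause is then immediate from Corollary \ref{six}: the extension has degree $6>1$, so a trivial automorphism group precludes its being Galois. I do not anticipate a serious obstacle; the substantive work is packaged into the preceding two lemmas. The only point requiring a brief check is in the restriction step, namely verifying that pulling a rational function of $(x,y)$ back along a rationally parametrized curve (whose parametrization is nonconstant, in fact injective outside a finite set of parameter values) produces an honest rational function of $h$ and not an indeterminate $0/0$ form. This is straightforward, and once it is granted, the contradiction between Lemmas \ref{existence} and \ref{irrational} closes the argument.
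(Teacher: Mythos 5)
Your proposal takes essentially the paper's approach: reduce to the rationality of $h'$, restrict to the level set $P=0$, and combine Lemmas \ref{existence} and \ref{irrational} to reach a contradiction. The one point you flag but then dismiss as ``straightforward'' --- that restricting the rational function $h'$ to the three curves of $P=0$ produces a genuine rational function of $h$ --- is not automatic, and is exactly where the paper invokes Lemma \ref{existence} a second time. Since $\R(x,y)=\R(P,h)$ is a rational function field with transcendence basis $\{P,h\}$, writing $h'=a(P,h)/b(P,h)$ in reduced form, the restriction to $P=0$ is \emph{generically undefined} whenever $P$ divides $b$; this is a live possibility for an arbitrary element of $\R(x,y)$ (consider $1/P$), and is a different issue from a pointwise $0/0$ indeterminacy. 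To exclude it you need the fact, which Lemma \ref{existence} supplies, that $h'$ is defined at all but one point of the three curves (being equal there to the real analytic $h\circ\tau$), so $b$ cannot vanish identically along $P=0$. Once that observation is made explicit, your argument coincides with the paper's proof.
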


\begin{proof}
If the group contains a nontrivial automorphism $\vp$, 
then $h'=\vp(h)=h(x',y')$ (see above) belongs to
$\R(x,y)=\R(P,h)$. As $\R(P,h)$ is a rational function field 
in two algebraically independent elements over $\R$, the 
restriction of $h'$ to the level set $P=0$ must either be 
generically undefined (uncanceled $P$ in the denominator) or 
a rational function of $h$. 
The first case is ruled out by Lemma \ref{existence}, which 
also contradicts Lemma \ref{irrational} in the second case.
\end{proof}

\subsection{All {P}inchuk maps}\label{main}{\ \newline}

From a geometric point of view,
any two different \pk maps are very closely related.
More specifically, if $F_1=(P,Q_1)$ and $F_2=(P,Q_2)$
are \pk maps then they have the same first component, $P$,
and their second components satisfy $Q_2=Q_1+S(P)$
for a \p $S$ in one variable with real coefficients.
As maps of $\R^2$ to $\R^2$, therefore, they differ only by
a triangular \p automorphism of the image plane. In effect,
all \pk maps share the same geometry. Indeed, the set
$Z=F	^{-1}(A(F))$ and the real analytic function 
$h'$ of the previous section \ref{aut} are exactly the same
for all \pk maps. 
Moreover,  all \pk maps are generically two to one,
 and their asymptotic varieties
have algebraically isomorphic embeddings in the 
image plane.

Remark. 
In \cite{GeoPinMap} Janusz Gwo{\'z}dziewicz studied
a Pinchuk map of total degree $40$
and noted the single point in the Zariski closure of
the asymptotic variety not on the variety itself.
He first brought to my attention
the \p relation between different \pk maps
in an informal communication in 2009..
An algebraically oriented proof, along lines suggested by
Arno van den Essen, can be found in \cite{aspc}.

Let $F$ be the same \pk map as before.  It is defined over
$\Q$. In fact, not only do $P$ and $Q$ have rational coefficients,
but so do $h$ and all terms of the minimal \p $m$ for $h$.
Let $k$ be any  subfield of $\R$, including the possibilities
$k=\Q$ and $k=\R$. Then the powers $h^i$ for $i=0,\ldots,5$
form a basis for $k[P,Q][h]$ as a free module over $k[P,Q]$
and for $k(x,y)$ as a vector space over $k(P,Q)$.
So the field extension $k(F)=k(P,Q)\subset k(X)=k(x,y)$ is of degree $6$.

\begin{Prop}
Let $F_1=(P,Q_1)$ and $F_2=(P,Q_2)$ be \pk maps and let $Q_2=Q_1+S(P)$
for a \p $S$ in one variable with real coefficients.
Then $S$ is uniquely determined and its coefficients
belong to any  subfield of $\R$ that contains  the coefficients
of $Q_1$ and $Q_2$.
\end{Prop}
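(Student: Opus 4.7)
The plan is to exploit two facts about the first component $P$, which is the same for every \pk map: $(i)$ $P$ is built from $t, h, f$ via formulas with integer coefficients, so $P \in \Z[x,y]$; and $(ii)$ $P$ is a non-constant polynomial in $x$ and $y$, hence transcendental over every subfield of $\R$.

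Uniqueness will follow immediately from $(ii)$: the $\R$-algebra homomorphism $\R[T] \To \R[x,y]$ given by $T \mapsto P$ is injective because $P$ is transcendental over $\R$, so the equation $S(P) = Q_2 - Q_1$ determines $S$.

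For the statement about the field of coefficients, write $S(T) = s_0 + s_1 T + \cdots + s_d T^d$ with $s_i \in \R$ and set $g = Q_2 - Q_1 \in k[x,y]$. I would extract each $s_i$ by linear algebra over $\Q$. Since $1, P, P^2, \ldots, P^d$ are $\R$-linearly independent in $\R[x,y]$ (by $(ii)$), there exist $d+1$ monomials $x^{a_j} y^{b_j}$, $j = 0, \ldots, d$, such that the $(d+1) \times (d+1)$ matrix $M$ whose $(i,j)$ entry is the coefficient of $x^{a_j} y^{b_j}$ in $P^i$ has nonzero determinant. By $(i)$, $M$ has entries in $\Q$, so $\det M \in \Q^{\times}$ and $M^{-1}$ also has rational entries. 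Reading off the coefficient of $x^{a_j} y^{b_j}$ in the identity $\sum_i s_i P^i = g$ yields a linear system $M^T s = c$, where $s = (s_0,\ldots,s_d)^T$ and $c \in k^{d+1}$ is the vector of coefficients of the chosen monomials in $g$. Since $(M^T)^{-1}$ has rational entries, $s = (M^T)^{-1} c \in k^{d+1}$.

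I do not anticipate any genuine obstacle: the argument is pure linear algebra once one invokes the rationality of the coefficients of $P$ together with its transcendence over $\R$. The mildest technical point is merely the existence of the monomial indices $(a_j,b_j)$, which follows from the standard fact that a basis of any finite-dimensional subspace of $\R[x,y]$ spanned by polynomials with rational coefficients can be witnessed by a rational coordinate projection.
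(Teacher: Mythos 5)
Your uniqueness argument is identical to the paper's (transcendence of $P$ over $\R$). For the field-of-coefficients statement, however, you take a genuinely different route. The paper exploits the rational parametrization of level sets $P=c$ established in section \ref{ext}: for rational $c \ne 0,-1$ and rational $h$ away from the poles, the point $(x(h),y(h))$ has rational coordinates, so $S(c) = Q_2(x(h),y(h)) - Q_1(x(h),y(h))$ lies in $k$, and then interpolation through enough such rational $c$ recovers the coefficients of $S$ in $k$. Your approach is purely linear-algebraic: since $P \in \Z[x,y]$, the coordinates of $1, P, \ldots, P^d$ with respect to the monomial basis are rational, and their $\R$-linear independence guarantees a rational $(d+1)\times(d+1)$ nonsingular coordinate submatrix $M$; reading the same coordinates off $g = Q_2 - Q_1 \in k[x,y]$ gives a system $M^T s = c$ with $c \in k^{d+1}$, whence $s \in k^{d+1}$. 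Both are correct. Your version is more elementary and more general --- it uses nothing about Pinchuk maps beyond $P \in \Q[x,y]$ being transcendental, and would apply to any first component with rational coefficients. The paper's version is slightly more concrete and reuses machinery (the level-set parametrization) already developed for other purposes in that section, which is presumably why the author chose it.
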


\begin{proof}
$P$ is transcendental over $\R$, so $S$ is unique.
Let $k$ be the subfield of $\R$ in question.
Let $c \in \Q$ with $c \ne 0$ and $c \ne -1$.
Choose $h \in \Q$ that is not a pole of the  previously described rational parametrization $x(h),y(h)$ of the level set $P=c$.
Since both $x(h)$ and $y(h)$ have formulas 
in $\Q(h,c)$, the real number
$S(c)=Q_2(x(h),y(h))-Q_1(x(h),y(h))$ actually is in $k$.
The coefficients of $S$ can be reconstructed, using rational arithmetic, from its values at any $j > \deg S$ such points $c$,
and so are in $k$.
\end{proof}

\begin{Cor}
Let $F_1$ and $F_2$ be \pk maps and let $k$ be $\R$ or any subfield of $\R$ over which both maps are defined.
Then $k(F_1)\subset k(X)$ and $k(F_2)\subset k(X)$
are one and the same field extension.
\end{Cor}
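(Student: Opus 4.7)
The plan is essentially to read off the conclusion from the preceding proposition. Write $F_1=(P,Q_1)$ and $F_2=(P,Q_2)$, and invoke the preceding proposition to obtain a polynomial $S$ in one variable with coefficients in $k$ such that $Q_2 = Q_1 + S(P)$. This is the only place where hypothesis ``$k$ contains the coefficients of both $Q_1$ and $Q_2$'' is used, and it is the whole point of having just proved that $S$ descends to $k$.

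Once $S \in k[T]$ is in hand, the rest is a one-line containment argument in both directions. From $Q_2 = Q_1 + S(P)$ and $P, Q_1 \in k(F_1)$ we get $Q_2 \in k(F_1)$, hence $k(F_2) = k(P,Q_2) \subseteq k(P,Q_1) = k(F_1)$. Symmetrically, $Q_1 = Q_2 - S(P) \in k(F_2)$, giving the reverse inclusion. Therefore $k(F_1) = k(F_2)$ as subfields of $k(X)$, which is the same-extension statement.

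There is no genuine obstacle here; the substantive work was done in the preceding proposition (ensuring rationality of $S$ over $k$ rather than merely over $\R$). The proof of the corollary itself is a routine application of the fact that a field containing $P$ and a polynomial expression in $P$ added to $Q_1$ also contains $Q_2$, and vice versa.
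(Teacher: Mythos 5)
Your proof is correct and is essentially the same as the paper's: both invoke the preceding proposition to place $S$ in $k[T]$ and then conclude $k(P,Q_1)=k(P,Q_2)$, with your version merely spelling out the two inclusions that the paper states in one line.
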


\begin{proof}
Since $S$ has coefficients in $k$, the relation
$Q_2=Q_1+S(P)$ implies that
$k(F_1)=k(P,Q_1)=k(P,Q_2)=k(F_2)$. 
\end{proof}

\begin{Thm}\label{extension}
Let $F$ be any \pk map and let $k$
be $\R$ or any subfield of $\R$
containing  the coefficients of $F$.
Although $F$ is generically two to one as a \p map of
$\R^2$ to $\R^2$, the degree of the associated 
extension of function fields over $k$ is six. 
Furthermore, the extension has no automorphisms 
other than the identity, and so, in particular, it is 
not {G}alois. 
\end{Thm}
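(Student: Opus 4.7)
The plan is to reduce to the specific Pinchuk map $F_0$ of Section \ref{specific} and then transfer the results for $k=\R$, which were obtained in Sections \ref{ext} and \ref{aut}, to an arbitrary subfield $k$ of $\R$ containing the coefficients of $F$.

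For the degree statement, the Corollary just proved shows that any two Pinchuk maps defined over $k$ produce the same field extension $k(F) \subset k(X)$, so one may replace $F$ by $F_0$. The discussion preceding this theorem has already observed that the powers $h^0,\ldots,h^5$ form a $k(P,Q)$-basis of $k(x,y)$ for every subfield $k$ of $\R$ containing $\Q$; this gives the degree $6$ conclusion directly.

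For triviality of the automorphism group, again pass to $F_0$. Suppose $\vp$ is a non-identity $k(P,Q)$-automorphism of $k(x,y)$. Since $k(x,y)=k(P,Q)(h)$, the map $\vp$ is determined by $h':=\vp(h)$, which must be another root in $k(x,y)$ of the minimal polynomial $m(T) \in k(P,Q)[T]$ of $h$. If $h'=h$, then equations \ref{ratexp} force $\vp$ to fix $x$ and $y$ and hence to be the identity, so necessarily $h' \ne h$. By Proposition \ref{R=cm}, $m$ has coefficients in $\Q[P,Q]$, and applying the uniform basis argument with $k=\R$ shows that $m$ is also the minimal polynomial of $h$ over $\R(P,Q)$, irreducible of degree $6$. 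Since $h' \in k(x,y) \subseteq \R(x,y) = \R(P,Q)(h)$ is a root of this same irreducible polynomial, the assignment $h \mapsto h'$ extends uniquely to a nontrivial $\R(P,Q)$-automorphism of $\R(x,y)$, contradicting Proposition \ref{notg}. The non-Galois conclusion is then immediate, since the trivial group cannot be the Galois group of an extension of degree $6 > 1$.

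There is no substantive obstacle in carrying this out; the work consists only of assembling pieces already in place. The single point requiring care is that $m$ does not drop in degree when the base is enlarged from $k(P,Q)$ to $\R(P,Q)$, and this is transparent from the uniform degree-$6$ basis argument (equivalently, irreducibility descends from $\R(P,Q)$ to $k(P,Q)$ since any factorization of $m$ over the smaller field would persist over the larger one).
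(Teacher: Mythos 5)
Your proposal is correct and follows essentially the same approach as the paper: reduce to the specific map via the Corollary about field coincidence, read off degree six from the $k(P,Q)$-basis $h^0,\ldots,h^5$, and transfer a hypothetical nontrivial $k(P,Q)$-automorphism of $k(x,y)$ to an $\R(P,Q)$-automorphism of $\R(x,y)$, contradicting Proposition \ref{notg}. The only difference is presentational: the paper asserts the extension to $\R$ in one sentence via $\R$-linear extension, whereas you make the mechanism explicit by tracking the primitive element $h$ and its conjugate $h'$ and using that $m$ remains irreducible of degree six over $\R(P,Q)$ — a cleaner articulation of the same step.
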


\begin{proof}
The conclusions have already been drawn for the earlier  specific 
\pk map $F=(P,Q)$ and for 
$k=\R$ (Corollary \ref{six} and Proposition \ref{notg}). 
Both \pk maps have the same function \fe over $k$, so 
it has degree six. And any nontrivial automorphism 
defined over $k$ defines one over $\R$, 
since the $\R$-linearly extended 
automorphism  preserves $P$, $Q$, and $\R$. 
\end{proof}

\section{Real \J conjectures}\label{conj}

 The Strong Real \J Conjecture (SRJC), 
as formulated in the introduction,
asserted that a polynomial map $F: \R^n \To \R^n$,
has a real analytic  inverse if 
its \J determinant, $j(F)$,  vanishes nowhere on $\R^n$.
It was refuted by the {P}inchuk counterexamples, so only special cases are continuing subjects of inquiry. Both the hypothesis concerning $j(F)$ and the conclusion that a real analytic inverse exists can be restated in various equivalent ways. Principally, the former is equivalent to
the assertion that $F$ is locally diffeomorphic or locally real bianalytic, and the latter to the assertion that $F$ is injective (one-to-one) or bijective (one-to-one and onto) or a homeomorphism or a diffeomorphism. These are all obvious, except for the key result that injectivity, also called univalence, implies bijectivity
for maps of $\R^n$ to itself that are
polynomial or, more generally, rational and defined on
all of $\R^n$ \cite{InjectiveReal}.
That result does not generalize to semi-algebraic maps of $\R^n$ to itself \cite{SurjectiveSemialgebraic}.
To avoid any possible confusion,
an everywhere defined real rational map has
components that belong to $\R(\vars)$,
each of which can be written as a fraction
with a polynomial numerator and a
nowhere vanishing polynomial denominator. 
In addition to their global and local properties in the 
strong (Euclidean) topology, such maps are continuous in the Zariski topology. 
Let the (false) rational real \J conjecture (RRJC))) be the
extension of the SRJC to everywhere defined  rational maps.
Clearly any global univalence theorems \cite{GlobU}
that apply to locally diffeomorphic maps $\R^n \To \R^n$
yield (true) special cases of the conjecture. 
Properness suffices, and related topological considerations 
play a role below. 
But the focus of this article is on results or conjectures that require
the polynomial or rational character of a map
and involve properties of the associated algebraic field extension .

Remark.  The term 'rational real \J conjecture'
and the acronym RRJC are not standard terminology. 
Even if rational maps are allowed, 
the conjecture is usually called the real \J conjecture
or even just the \J conjecture, despite considerable ambiguity.
And the term 'rational' is used with various shades of meaning. 
In some work on the two dimensional complex JC 
(e.g., \cite{SimpleRationalPolys}), a rational polynomial
is a polynomial in two complex variables  whose generic fiber is 
a rational complex curve. 
And Vitushkin \cite{Vitushkin} has presented
$F= (x^2 y^6+2xy^2, xy^3+1/y)$
as a sort of rational counterexample to the JC.
$F$ has constant \J determinant $j(F)=-2$ and
$F(-3,-1) = F(1, 1) = (3, 2)$
So $F$ is not injective when considered as a rational map of $\R^2$ to itself. But $F$ is also not defined everywhere on $\R^2$.

In the RRJC context,
the distinction between \nz constant and nowhere vanishing \J 
determinants is not as critical as it may seem. 
If $F:\R^n \To \R^n$ satisfies the hypotheses,
let $x \in \R^n, z \in \R$ and define 
$F^+:\R^{n+1} \To \R^{n+1}$
by $F^+(x,z)=(F(x),z/(j(F)(x))))$.
Then $F^+$ also  satisfies the hypotheses, $j(F^+)=1$, and 
$F^+$ is injective if, and only if, $F$ is injective. 
As pointed out  in 
\cite{RealJC+SamuelsonMaps}, choosing {P}inchuk maps for $F$ yields 
counterexamples to the RRJC in dimension $n=3$ with
\nz constant \J determinant. 

Note that all three conjectures are true in the 
 dimension $n=1$ case $f: \R \To \R$. 
 In the JC case, $f$ is of degree $1$. In the SRJC 
case, $f$ is proper, since any nonconstant polynomial becomes infinite 
when its argument does. 
In the RRJC case, $f$ is monotone increasing or decreasing, 
hence injective, thus surjective, so unbounded above and 
below, and therefore proper.

Let $F=(\comps)$ be  a real rational map of $n$ real variables $\vars$ and $k$ a subfield of $\R$ such that each component 
$f_i$ belongs to $k(X)=k(\vars)$.
Whether defined on all of $\R^n$ or not, if $j(F)$ is not identically zero, then $k(F) = k(\comps)\subseteq k(X)$ is an algebraic field extension  of finite degree. 
The reasons are the same as in the polynomial case,
and the following standard lemma shows they are true in broader contexts having nothing to do with $\R$. 

\begin{Lemma}
Let $k$ be a field of characteristic zero and suppose that 
$\comps \in k(\vars)$ are algebraically dependent over $k$.
Let $j(F) \in k(\vars)$ be the \J determinant of $F=(\comps)$.
Then $j(F)=0$.
\end{Lemma}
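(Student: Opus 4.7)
The plan is to use a minimal algebraic relation and the chain rule to produce a linear system whose coefficient matrix is essentially the Jacobian, then extract vanishing of its determinant.

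First I would choose a nonzero polynomial $G(y_1,\ldots,y_n) \in k[y_1,\ldots,y_n]$ of minimal total degree such that $G(\comps) = 0$ in $k(\vars)$; such a $G$ exists by the hypothesis of algebraic dependence. Then I would differentiate the identity $G(\comps) = 0$ with respect to each $x_j$ via the chain rule in $k(\vars)$, obtaining for each $j = 1,\ldots,n$
\begin{equation*}
\sum_{i=1}^{n} \frac{\partial G}{\partial y_i}(\comps)\,\frac{\partial f_i}{\partial x_j} = 0.
\end{equation*}
This is a homogeneous linear system over $k(\vars)$ in the $n$ quantities $v_i := (\partial G/\partial y_i)(\comps)$, whose coefficient matrix is the transpose of the Jacobian matrix of $F$.

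Next I would argue that the vector $(v_1,\ldots,v_n)$ is not identically zero. Since $G$ is nonzero and $k$ has characteristic zero, at least one formal partial $\partial G/\partial y_i$ is a nonzero polynomial of strictly smaller total degree than $G$. By the minimality of the degree of $G$ as an algebraic relation for $\comps$, the evaluation $(\partial G/\partial y_i)(\comps)$ cannot vanish in $k(\vars)$ for that index $i$. Hence the linear system has a nontrivial solution, forcing the determinant of its coefficient matrix to vanish; since this determinant equals $\pm j(F)$, we get $j(F) = 0$.

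The only subtlety I expect is the minimality argument in the rational (not polynomial) setting: one must be sure that ``minimal total degree of a polynomial relation'' is well defined and that the conclusion $(\partial G/\partial y_i)(\comps)\neq 0$ really follows. But this is immediate once one observes that the polynomial $G$ lives in $k[y_1,\ldots,y_n]$, its total degree is an honest nonnegative integer, and any polynomial in $y_1,\ldots,y_n$ of strictly smaller total degree that evaluated to zero on $\comps$ would contradict the minimal choice of $G$. The characteristic zero hypothesis enters only to ensure that differentiation does not annihilate a nonzero polynomial of positive degree, which is exactly what is needed to produce such a smaller-degree nonvanishing polynomial relation on the $v_i$.
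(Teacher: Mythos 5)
Your proof is correct, and it differs from the paper's in a worthwhile way. Both arguments start from the same identity: differentiating $G(\comps)=0$ by the chain rule gives $v\cdot J(F)=0$, where $v_i=(\partial G/\partial y_i)(\comps)$. The paper argues by contradiction: assuming $j(F)\neq0$, so $J(F)$ is invertible over $k(\vars)$, one concludes $v=0$, hence every first-order partial of $G$ is either the zero polynomial or gives a new algebraic relation; iterating this through all orders of partials eventually produces a nonzero constant that must also vanish at $\comps$, a contradiction. You instead fix a nonzero relation $G$ of minimal total degree up front, observe (using characteristic zero, as the paper also does implicitly) that some formal partial $\partial G/\partial y_i$ is nonzero of strictly smaller degree, and invoke minimality to conclude $(\partial G/\partial y_i)(\comps)\neq0$. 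That gives a nontrivial kernel vector for $J(F)^T$ directly, and hence $j(F)=0$ without contradiction or iterated differentiation. Your version is a bit cleaner and avoids the informal "by induction, the same is true for partials of all orders" step; the paper's version has the minor advantage of not needing to single out a minimal-degree relation in advance. One small point to make fully explicit: since $G(\comps)=0$ and $G\neq0$, $G$ cannot be a nonzero constant, so it has positive degree and the nonzero partial you need really exists.
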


\begin{proof}
Suppose, to the contrary, that $h(\comps)=0$ for a \nz polynomial 
$h \in k[y_1,\ldots,y_n]$, but $j(F)\ne 0$. Put $K=k(\vars)$ and 
observe that $J(F)$, the \J matrix of $F$, is an invertible 
matrix with entries in the field $K$. From the chain rule 
$v \cdot J(F) =0$, where $v$ is the row vector with components $v_i = (\partial h/\partial y_i)(\comps) \in K$.
It follows that $v=0$, and so each first order
partial derivative of $h$ is either the zero polynomial or it defines a new 
relation of algebraic dependence. By induction, the same is 
true for partials of all orders. But since $h$ is a polynomial, at least one partial of some
order is a \nz constant, yielding a contradiction. 
\end{proof}

So both the {G}alois and birational cases of the RRJC are meaningful. 
The {G}alois case of the standard JC is settled and true 
over any field $k$ of characteristic zero. It states that
a polynomial map $k^n \To k^n$ with constant \nz \J determinant 
and a {G}alois field extension  
has a polynomial inverse. It was first proved for $k=\C$ only \cite
{GaloisCase}, using methods of the theory of several complex variables. The general case appears in \cite{Razar} and, independently,
\cite{AlgebraicGaloisCase} .
For a recent proof, see \cite[Thm. 2.2.16]{ArnoBook}. Of course, the existence of 
a polynomial inverse implies the triviality of the field extension , so the theorem has 
no concrete examples. 
In contrast, in the SRJC and RRJC contexts, the existence of 
an inverse does not imply the field extension  is {G}alois, much less birational. 
For instance, if $y=f(x)=x^3+x$, the field extension  $\R(y) \subset \R(x)$ 
is neither.

The following are basic properties of an everywhere defined 
real rational map
$F: \R^n \To \R^n$ with $j(F)$ vanishing nowhere.
It is a local diffeomorphism, hence an open map.
Let $x \in \R^n$ and $y= F(x) \in \R^n$ and define $m(x)$
to be the number of inverse images of $y$ under $F$, 
potentially allowing $+\infty$ as a possible value. 
Since $F$ is open, $m(x')\ge m(x)$ for $x'\in \R^n$ in a neighborhood 
of $x$. So if $A \subseteq \R^n$, the maximum value of $m$ 
on $A$ is also the maximum value of $m$ on its topological 
closure $\bar{A}$. 
Let $t \in \R(X)$ be a primitive element (generator) for the field extension  
$\R(F) \subseteq \R(X)$, and suppose it satisfies an equation of
minimal degree $d$ in $t$ over $\R[F]$.
Temporarily define a Zariski open subset $U$ of $\R^n$ by declaring $x \in U$ If $y=F(x)$ is not a zero of 
the leading coefficient of the equation for $t$ 
and $x$ is not a zero of a selected specific  common denominator for $t$ and the coefficients of the expressions for the components of $F$ as polynomials in $t$. 
If $x \in U$, then $t(x)$ can have at most $d$ values
and so $m(x)$ is also not greater than $d$. 
Let $N \le d$ be the maximum value of $m(x)$ on $U$.  
Because $\bar{U}=\R^n$,  $N$ is in fact the maximum 
value of $m(x)$ for all $x \in \R^n$. 
That shows not only that $F$ is quasifinite, meaning that
the inverse image of any point in the codomain is a finite set, 
but also that the degree of the field extension  $\R(F) \subseteq \R(X)$ is a global bound on the size of those sets. 
All subsets of $\R^n$ that can be described in the first order logic 
of ordered fields are semi-algebraic. 
The description can include real constant symbols 
(coefficients, values, etc.) and quantification over real 
variables (but not over subsets, functions or natural numbers); 
results for any dimension $n>0$ and involving 
polynomials of arbitrary degrees follow from 
schemas specifying first order descriptions for any fixed 
choice of the natural number parameters. 
As a first application of that principle,  the $N$ subsets of the 
domain $\R^n$ on which $m(x)$ has a 
specified numeric value in the range $1,\ldots,N$,
and the  $N+1$ subsets of the codomain $\R^n$ on which $y$ has a 
specified number of inverse images in the range $0,\ldots,N$, 
are all semi-algebraic. 
The set of points $y$ in the codomain at which $F$ is proper is 
readily verified to be the open set of points at which the number of 
inverse images of $y$ is locally constant. 
That set 
contains all points with $N$ inverse images and has an $\epsilon$-ball
first order description.
Its complement $A(F)$, the asymptotic variety of $F$, is therefore 
closed semi-algebraic and the inclusion $A(F)\subset \R^n$ is strict. 
$A(F)$ Is the union for $i=0,\ldots,N-1$ of the  semi-algebraic sets consisting of points $y$ in the codomain at which $F$ 
is not proper and for which $y$ has exactly $i$ inverse images. 
At an interior point $y$ of one of these sets $F$ would be proper, 
contradicting $y \in A(F)$. Thus each such set has empty interior, 
hence is of dimension less than $n$. 
Consequently $\dim A(F) < n$. 
It follows that the complement of $A(F)$ is a finite union of disjoint 
connected open semi-algebraic subsets of $\R^n$ on each of which the number of 
inverse images of points is a constant, with possibly differing 
constants for different connected components.
If $U$ is such a connected component, then $F^{-1}(U)$ is 
also open and semi-algebraic. If it is not empty, let $V$ be one of its 
finitely many connected components. Since $V$ is an open and closed subset of $F^{-1}(U)$, the map $V \To U$ induced 
by $F$ is a proper local homeomorphism of connected, locally compact, and locally arcwise connected spaces and hence it is a covering map. Such a map is surjective, so all of $U$ is 
contained in $F(\R^n)$. 
$V$ must be exactly one of the finitely many connected components 
of the open semi-algebraic set $\R^n \setminus F^{-1}(A(F))$, 
since it is closed in that subset as one element of a finite cover by 
total spaces of covering maps. 
Speaking informally, this presents a view of $F$ as a finite 
collection of $n$-dimensional covering maps, of possibly 
different degrees, glued together along semi-algebraic sets of 
positive codimension to form $\R^n$ at the total space level, 
whose base spaces, which may sometimes coincide for different 
total spaces, are similarly glued together to form $F(\R^n)$. 
While $F(\R^n)$ is open and connected, it
would not be dense in $\R^n$ if $F^{-1}(U)=\emptyset$ for some connected component $U$ of $\R^n \setminus A(F)$,
a possibility that has not been ruled out. 
$F(\R^n) \cap A(F)$ is in general neither empty nor all of $A(F)$, 
a behavior exhibited by any {P}inchuk map.

Remark. There is an extensive body of work by Zbigniew Jelonek 
defining, investigating, or related to the concept of the set of 
points at which a polynomial map is not proper. 
In \cite{geometry} he covers and sharpens the just described facts, at least for polynomial maps. As one result, he shows that 
for a nonconstant polynomial map $F: \R^n \To \R^m$, where $n$ and $m$ 
are any positive integers and no other conditions are imposed, 
 the set $A(F)$ of points at which $F$ is not proper 
is $\R$-uniruled. By that he means that for any $a \in A(F)$ there is 
a nonconstant polynomial map $g: \R \To \R^m$ (a polynomial curve) such that 
$g(0)=a$ and $g(t) \in A(F)$ for all $t \in \R$. That in turn 
implies that every connected component of $A(F)$ is unbounded 
and has positive dimension. 
In the same article, Jelonek explicitly considers the SRJC and 
shows, using topological methods, that $F$ has an inverse 
(and hence $A(F)=\emptyset$)  if 
$A(F)$ has codimension three or higher. 
That and related results will be reconsidered below in the RRJC context.

\begin{Ex}\label{uni}
Let $f:\R \To \R$ be the real rational map given by 
$y=1/(x^2+1)$. The image $f(\R)$ is the half open interval 
$0 < y \le 1$. The point $y=0$ is the only point at which $f$ 
is not proper. So $A(f)=\{0\}$ is of dimension $0$ and not 
uniruled. 
\end{Ex}

\subsection{Points of definition}\label{conj1}{\ \newline}

This section states  a number of standard definitions and 
assembles some associated technical results for later use. 

Let $k$ be any field. Characteristic zero is not assumed. 
It is not technically correct to speak of {\it the coefficients} 
of a rational function, since it is an equivalence  class of 
(numerator, denominator) pairs. A rational function is said to be 
{\it defined over $k$} if it has a representative pair
with coefficients in $k$. And a rational function  of $n$ variables defined over $k$ 
is said to be {\it defined at a point} $x \in k^n$ if the denominator 
can also be chosen so that it is not zero at $x$.
Eliminating common factors of the numerator and denominator 
yields a {\it reduced fraction}, and unique factorization 
shows that all reduced fraction representations of a given 
rational function have the same numerator and the same 
denominator, up to multiplication by nonzero elements of $k$; 
such a denominator  is zero exactly at the 
points at which the function is not defined. 
Any reduced fraction for $f \in k(X)$ 
can be used to determine if $f$ is defined at a point $x \in k^n$, 
and if so, to compute its value $f(x)$; this also applies if $x$ is 
allowed to have coordinates in a commutative $k$-algebra.

Let $K$ be any field containing $k$ as a subfield.

\begin{Lemma}\label{a=bc}
Let $a,b,c \in K[X]$ satisfy $a=bc \ne 0$. If any two of them 
belong to $k[X]$, so does the third.
\end{Lemma}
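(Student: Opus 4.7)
The plan is to split into cases depending on which two of $a,b,c$ are assumed to lie in $k[X]$. If both $b$ and $c$ are in $k[X]$, the conclusion $a = bc \in k[X]$ is immediate. The two remaining cases are symmetric in $b$ and $c$ (since $a = bc = cb$), so it suffices to treat: given $a,b \in k[X]$ and $c \in K[X]$ with $a = bc$, show $c \in k[X]$.

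The key idea is to regard $K[X]$ as the free $k[X]$-module $K \otimes_k k[X]$. Concretely, fix a basis $\mathcal{B}$ of $K$ as a $k$-vector space, chosen so that $1 \in \mathcal{B}$. Every element of $K[X]$ then has a unique expansion as a finite $k[X]$-linear combination of elements of $\mathcal{B}$. Write
\[
c = \sum_{e \in \mathcal{B}} c_e\, e, \qquad c_e \in k[X],
\]
with only finitely many $c_e$ nonzero. Multiplying by $b \in k[X]$ preserves this form, so
\[
bc = \sum_{e \in \mathcal{B}} (b c_e)\, e.
\]
On the other hand $a = bc$ and $a \in k[X]$ has the unique expansion $a = a \cdot 1$. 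Comparing coefficients gives $b c_e = 0$ for every $e \in \mathcal{B} \setminus \{1\}$ and $b c_1 = a$. Since $b \neq 0$ and $k[X]$ is an integral domain, the first set of equalities forces $c_e = 0$ for $e \neq 1$, so $c = c_1 \in k[X]$, as required.

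There is no real obstacle here: the argument amounts to the fact that $k[X] \to K[X]$ is faithfully flat (indeed free), so divisibility descends from $K[X]$ to $k[X]$. The only point worth flagging is that $X$ stands for several indeterminates, so $k[X]$ is not a PID and one should not appeal to B\'ezout or to a Euclidean algorithm; the basis argument above sidesteps that issue and uses only that $k[X]$ is a domain.
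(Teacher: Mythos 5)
Your proof is correct, but it takes a different route from the paper's. The paper argues via a division algorithm: it fixes a term order on monomials in $\vars$, compares leading terms in $a=bc$ to see that the leading term of the unknown factor is a monomial times a scalar in $k$, subtracts it off, and finishes by descending induction on the highest term remaining. Your proof instead exploits the structure of $K[X]$ as a free $k[X]$-module: pick a $k$-basis $\mathcal{B}$ of $K$ containing $1$, expand $c$ as $\sum_e c_e\,e$ with $c_e \in k[X]$, multiply by $b$, and compare with the expansion $a = a\cdot 1$; uniqueness of coordinates plus the domain property of $k[X]$ kills all coefficients $c_e$ with $e \ne 1$. Both arguments are sound and elementary. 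The paper's is closer in spirit to a Gr\"obner-style division and needs nothing beyond a monomial order on $k[X]$; yours is more structural, making the "divisibility descends under a free (hence faithfully flat) base change" phenomenon explicit, which is also the cleanest way to see why the hypothesis $a\ne 0$ (so $b\ne 0$) is genuinely used. Your final remark that the PID/B\'ezout route is unavailable in several variables is a good observation, and the basis argument does sidestep it as you say.
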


\begin{proof}
Clear if $b,c \in k[X]$. Suppose, without loss of 
generality, that $a,c \in k[X]$.
Choose a term order for $\vars$. That is, choose a total order 
on monomials in $\vars$ compatible with multiplication. 
Comparison of leading terms reveals that $b$ has as leading term 
$t$ the product of a monomial and a coefficient in $k$. 
Let $a'=a-tc=(b-t)c$. 
If $a'=0$, then $b=t \in k[X]$.
If not, conclude by descending induction on 
the maximum order of terms in $b$. 
\end{proof}

\begin{Lemma}\label{k(t)}
If $K/k$ is purely transcendental and $p \in k[X]$ is
 irreducible, , then $p$ remains irreducible in $K[X]$.
\end{Lemma}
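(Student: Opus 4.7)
The plan is to reduce to the case of a single transcendental and then apply Gauss's lemma.  Since any purported factorization $p=ab$ in $K[X]$ involves only finitely many coefficients, the elements of $K$ appearing in $a$ and $b$ lie in a subfield $k(t_1,\ldots,t_m) \subseteq K$ that is still purely transcendental over $k$.  An induction on $m$ then reduces matters to the case $K = k(t)$ with $t$ a single transcendental.  So I would state and prove the single-transcendental case, and invoke the finite reduction as a remark.

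For $K=k(t)$, suppose $p=ab$ with $a,b\in k(t)[X]$.  First I would clear denominators: choose $r,s\in k[t]\setminus\{0\}$ with $A := ra,\ B := sb \in k[t][X]$, so that $rs\,p = AB$ in $k[t][X] = k[X,t]$.  The next (and crucial) observation is that $p$ is irreducible in $k[X,t]$.  Indeed, any factorization $p = fg$ in $k[X][t]$ forces both $f$ and $g$ to have degree $0$ in $t$ (because $p$ does), so $f,g \in k[X]$; the hypothesis that $p$ is irreducible in $k[X]$ then finishes it.

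Now $k[X,t]$ is a UFD, so from $rs\,p = AB$ and the irreducibility of $p$ there, $p$ divides $A$ or $B$ in $k[X,t]$.  Without loss of generality $A = pC$ for some $C \in k[X,t]$.  Substituting back, $ra = pC$, so in $k(t)[X]$ we have $a = pC/r$, whence $p = ab = p(C/r)b$ and therefore $(C/r)b = 1$ in $k(t)[X]$.  Since $k(t)[X]$ is a domain, this forces $b$ to be a unit in $k(t)[X]$, i.e.\ a nonzero element of $k(t)$.  Hence $p$ admits no nontrivial factorization in $k(t)[X]$, and the lemma follows.

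The only place that requires a bit of care is the step that $p$ stays irreducible when moving from $k[X]$ to $k[X,t]$; everything else is bookkeeping.  That step succeeds simply because $p$ has degree zero in the new variable $t$, which is what makes the purely transcendental hypothesis work; no further tool beyond unique factorization in $k[X,t]$ is needed.
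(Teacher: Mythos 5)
Your proof is correct and follows essentially the same route as the paper: reduce to finitely many transcendentals, then by induction to a single transcendental $t$, and in the base case argue via degree in $t$. Your version is more careful than the paper's terse sketch, since you explicitly clear denominators to work in $k[X,t]$ and then invoke unique factorization there, whereas the paper speaks loosely of the factors having ``degree zero in $t$'' even though a priori they live in $k(t)[X]$.
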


\begin{proof}
If $p=ab$, then the factorization actually takes place in $k[X]$.
To prove that, first, assume a simple transcendental extension $K=k(t)$. Then both factors must have degree zero in $t$.
Next, induction handles finite transcendence degree.
Finally, a counterexample could only involve finitely many 
elements of a transcendence base.
\end{proof}

\begin{Lemma}\label{Bass}
If $K/k$ is algebraic and $p,q \in k[X]$ are relatively prime, 
then they remain  relatively prime in $K[X]$.
\end{Lemma}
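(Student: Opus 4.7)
The plan is to exploit the fact that $K[X]$ is a free $k[X]$-module (via any $k$-basis of $K$) and to recast relative primality in ideal-theoretic terms that behave well under this extension. Because $k[X]$ is a UFD, $p$ and $q$ relatively prime in $k[X]$ is equivalent to the equality $(p) \cap (q) = (pq)$: in a UFD, the intersection of two principal ideals is generated by their least common multiple, which collapses to $pq$ precisely when the gcd is a unit. The same equivalence holds in the UFD $K[X]$, so it suffices to show that the equality $(p)\cap(q) = (pq)$ transfers from $k[X]$ to $K[X]$.

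To do this I would first record the following general observation. Fix a $k$-basis $\{e_\lambda\}$ of $K$, so that $K[X] = \bigoplus_\lambda k[X]\cdot e_\lambda$ as $k[X]$-modules. For any ideal $I \subseteq k[X]$, the extended ideal then admits the direct-sum description $IK[X] = \bigoplus_\lambda I\cdot e_\lambda$, obtained by expanding each element of $K[X]$ in the basis and reading off the coefficient of $e_\lambda$, which lies in $I$ iff the whole element lies in $IK[X]$ (this is exactly where freeness enters, through uniqueness of basis representations). Consequently, for two ideals $I$ and $J$ of $k[X]$, comparison of direct summands yields
$IK[X] \cap JK[X] = \bigoplus_\lambda (I\cap J)\,e_\lambda = (I\cap J)K[X].$

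Applying this with $I=(p)$ and $J=(q)$ transfers the equality $(p)\cap(q)=(pq)$ from $k[X]$ to $K[X]$, which by the UFD characterization forces the gcd of $p$ and $q$ in $K[X]$ to be a unit, as desired. I do not anticipate any serious obstacle; the argument is a short bookkeeping exercise once the direct-sum decomposition is recorded. I note in passing that the hypothesis that $K/k$ is algebraic is not actually used by this approach, so the same proof covers arbitrary field extensions $K/k$ (and, combined with Lemma \ref{k(t)}, gives a uniform treatment).
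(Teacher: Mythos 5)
Your proof is correct, and it takes a genuinely different route from the paper's. The paper (crediting the argument to Hyman Bass) works through commutative algebra: since $K/k$ is algebraic, $B=K[X]$ is integral over $A=k[X]$; an irreducible common factor of $p,q$ in $B$ would generate a height-one prime of $B$, contraction would produce a height-one prime of $A$ (here the argument quietly relies on $A$ being normal so that going-down applies), and a height-one prime in $k[X]$ is principal, yielding a common irreducible factor in $k[X]$ and a contradiction. Your argument instead decomposes $K[X]$ as a free $k[X]$-module $\bigoplus_\lambda k[X]e_\lambda$, checks $IK[X]=\bigoplus_\lambda I e_\lambda$ coefficientwise, concludes $IK[X]\cap JK[X]=(I\cap J)K[X]$, and then transfers the UFD criterion ``$p,q$ coprime $\iff (p)\cap(q)=(pq)$'' from $k[X]$ to $K[X]$. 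Both are sound; what yours buys is elementarity (no Krull dimension, no going-down, just free modules and UFD facts) and generality, since the algebraicity of $K/k$ is not used at all. You are also right that this would let Lemma~\ref{redfrac} be deduced directly from a single statement covering arbitrary extensions, making Lemma~\ref{k(t)} superfluous for that purpose. One small point worth recording for completeness: the equivalence $(p)\cap(q)=(pq)\iff\gcd(p,q)$ a unit tacitly assumes $p,q\neq 0$, which is the case in the paper's applications (numerators and denominators of reduced fractions with nonzero denominator).
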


Remark. The following proof was privately communicated by Hyman Bass.

\begin{proof}
Let $A=k[X]$ and $B=K[X]$. 
As an extension of commutative rings, $B/A$ is integral, 
because $B$ is generated over $A$ by 
the elements of $K$, which are algebraic 
over $k$ and hence integral over $A$. 
Let $J_k$ ($J_K$) be the ideal generated by $p$ and $q$ 
in $A$ (respectively, $B$). If $p$ and $q$ have an  
irreducible common divisor in $B$, it generates a 
height $1$ prime ideal $I$, such that $J_K \subseteq I$. 
Contraction preserves height for integral extensions, 
so $I \cap A$ is a height $1$ prime ideal in $A$. 
As $A$ is the polynomial algebra $k[X]$, a height $1$ prime ideal 
is a principal ideal generated by an irreducible polynomial. 
Since $J_k \subseteq I \cap A$, that polynomial divides both 
$p$ and $q$ in $A=k[X]$, contradicting the 
hypothesis that they are relatively prime.  
\end{proof}

\begin{Lemma}\label{redfrac}
Reduced fractions remain reduced for any \fe $K/k$. 
\end{Lemma}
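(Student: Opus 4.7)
The plan is to reformulate the lemma as follows: if $p, q \in k[X]$ satisfy $\gcd(p,q) = 1$ in the UFD $k[X]$, then $\gcd(p,q) = 1$ in $K[X]$ as well. This is what "a reduced fraction stays reduced" amounts to, since passing from $k(X)$ to $K(X)$ does not alter the numerator and denominator polynomials themselves, only the ambient coefficient ring. My strategy is to split the extension $K/k$ into a purely transcendental part followed by an algebraic part, and to handle each piece with one of the two preceding lemmas.

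Specifically, I would pick a transcendence basis $T \subseteq K$ of $K/k$, giving the tower $k \subseteq k(T) \subseteq K$ in which $k(T)/k$ is purely transcendental and $K/k(T)$ is algebraic. The algebraic step from $k(T)[X]$ to $K[X]$ is handled immediately by Lemma \ref{Bass} (applied with $k(T)$ in place of $k$ and $K$ in place of $K$), so the real work lies in the step from $k[X]$ to $k(T)[X]$.

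For that step, assume $K/k$ is purely transcendental and factor $p = \prod p_i$ and $q = \prod q_j$ into irreducibles of the UFD $k[X]$. By Lemma \ref{k(t)}, each $p_i$ and each $q_j$ remains irreducible in the UFD $K[X]$. The only way $p$ and $q$ could then acquire a common factor is for some $p_i$ to become associate to some $q_j$ in $K[X]$. If $p_i = c\, q_j$ with $c \in K^\times$, I would pick any monomial whose coefficient $\alpha \in k$ in $q_j$ is nonzero and compare it to the corresponding coefficient $\alpha' \in k$ of $p_i$; this forces $c = \alpha'/\alpha \in k$, so $p_i$ and $q_j$ would already be associate in $k[X]$, contradicting coprimality of $p$ and $q$ there. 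Hence no common factor appears and coprimality is preserved.

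I do not anticipate a serious obstacle here: Lemmas \ref{k(t)} and \ref{Bass} do essentially all of the heavy lifting, and the only additional ingredient is the routine coefficient-comparison observation that distinct irreducibles in $k[X]$ cannot become associate after a purely transcendental extension. The only mildly delicate point is remembering to split $K/k$ through a transcendence basis rather than attempting to treat a general extension in one stroke.
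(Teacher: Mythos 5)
Your proof is correct and follows essentially the same route as the paper: decompose $K/k$ through a transcendence basis into a purely transcendental step followed by an algebraic step, dispatch the algebraic step with Lemma~\ref{Bass}, and use Lemma~\ref{k(t)} to preserve irreducibility across the purely transcendental step. The only cosmetic difference is that where the paper invokes Lemma~\ref{a=bc} to conclude that a potential cancellation has quotient in $k$, you observe directly (and correctly) that the quotient is a nonzero scalar and read it off by comparing a single nonzero coefficient; this is just the constant-factor special case of Lemma~\ref{a=bc}, so the two arguments are interchangeable here.
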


\begin{proof}
If $K/k$ is algebraic, this is just a restatement of 
Lemma \ref{Bass}. Suppose now that $K/k$ is pure 
transcendental. Factor numerator and denominator into 
irreducible polynomials in $k[X]$. In $K[X]$ the polynomials 
are irreducible by Lemma \ref{k(t)} and any potential 
cancellation has quotient in $k$ by Lemma \ref{a=bc}, thus 
contradicting the hypothesis that the original fraction 
is reduced. 
The general case follows immediately, since any \fe  is an 
algebraic extension of a pure transcendental extension. 
\end{proof}

\begin{Ex}
The real polynomial fraction $1/(x^2 + 1)$ is defined 
over $\Q$ and reduced over $\C$. 
\end{Ex}

\begin{Thm}\label{Consistency} (Consistency)
Let $f \in K(X)$ be a rational function in $n>0$ 
variables defined over a field $K$. Suppose that $f$ is 
defined over a subfield $k \subset K$. 
Then
\begin{enumerate}
\item $f=p/q$, where $p$ and $q$ are relatively prime
polynomials in $K[X]$ that have coefficients in $k$. 
\item $f$ is defined at $x \in K^n$ if, and only if, $q(x) \ne 0$. 
\item $f(x)$ is defined if, and only if, it is defined viewing $k$ 
as the coefficient field and $K$ as a commutative $k$-algebra; 
if so, the value is the same. 
\item if defined, $y=f(x) \in K$ belongs to the subfield 
generated by $k$ and the coordinates of $x$. 
\end{enumerate}
\end{Thm}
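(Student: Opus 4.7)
The plan is to build up all four parts from part (1), for which Lemma \ref{redfrac} does the real work; once a reduced representative with coefficients in $k$ is secured, the remaining parts amount to bookkeeping about evaluation.

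For (1), I would start from the hypothesis that $f$ is defined over $k$: this gives $f=a/b$ with $a,b \in k[X]$ and $b\ne 0$. Since $k[X]$ is a unique factorization domain, I can divide out $\gcd(a,b)$ in $k[X]$ to obtain a representation $f=p/q$ with $p,q \in k[X]$ coprime in $k[X]$. Lemma \ref{redfrac} then asserts that this fraction remains reduced when viewed inside $K(X)$, which is exactly the conclusion of (1).

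For (2), I would invoke the general statement recalled immediately before Lemma \ref{a=bc}: over any coefficient field, any reduced fraction representation of a rational function has a denominator (unique up to a nonzero scalar from that field) whose zeros are exactly the points of non-definition. Applied with coefficient field $K$ to the $p,q$ produced in (1), this gives the biconditional in (2). For (3), the key observation is that $p$ and $q$ have coefficients in $k$, so evaluating them at a point $x \in K^n$ yields the same element of $K$ whether one regards $p$ and $q$ as elements of $K[X]$ or as elements of $k[X]$ evaluated via the commutative $k$-algebra structure of $K$. Combined with (2), this shows that the defined-ness and value of $f(x)$ are the same under both viewpoints. For (4), when $f(x)$ is defined we have $f(x)=p(x)/q(x)$ with $p(x),q(x)$ already in the subring of $K$ generated by $k$ and the coordinates of $x$, so their ratio lies in the subfield generated by that data.

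The only step that is not essentially a matter of unwinding definitions is the appeal to Lemma \ref{redfrac} in part (1); if I did not already have that lemma in hand, I would need to rule out the possibility that a fraction which is reduced in $k[X]$ acquires a nontrivial common factor after enlarging scalars to $K$, which is precisely the content of Lemmas \ref{a=bc}, \ref{k(t)} and \ref{Bass}. Since those are already proved, the present theorem follows with no further obstacle.
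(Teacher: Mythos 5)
Your proposal is correct and follows essentially the same route as the paper: express $f$ as a reduced fraction over $k$, invoke Lemma \ref{redfrac} to see it stays reduced over $K$, and then derive (2)--(4) from the uniqueness of reduced denominators and the fact that evaluating $p,q\in k[X]$ at $x\in K^n$ is the same computation under either viewpoint. No gaps.
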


\begin{proof}
To prove the first conclusion, express $f$ as a reduced fraction 
$p/q$ with $p,q \in k[X]$, then apply Lemma \ref{redfrac}. 
The second is an earlier noted property of reduced fractions,
easily proved by unique factorization. 
For the third, use $p/q$ in both cases and observe that the 
test and conditional value computation are identical. 
Fourth, $p(x)$ and $q(x)$ obviously belong to that subfield. 
\end{proof}

Remark. Although more cumbersome to state, the results 
for $x \in K^n$ extend to points with coordinates in 
a commutative $K$-algebra.

The most important idea in the above theorem is that, 
although polynomials can acquire new factorizations when the 
coefficient field is extended, rational functions cannot 
acquire new value definitions. 
That is, a rational function 
either remains undefined at a point  or retains its prior value, 
no matter how it is expressed or simplified in the new context.

Over $\R$, there is an entirely different  way, involving analytic 
functions, to determine if a rational function is defined at a point.
Let $U$ be an open subset of $\R^n$ and $f$ a real analytic
function defined on $U$. If $x \in \R^n$ is a point on the 
boundary of $U$, then $f$ is said to {\it extend analytically} to $x$ 
if there exists a real analytic function $g$, defined on an open 
neighborhood $V$ of $x$, such that $f=g$ on $U \cap V$.

\begin{Lemma}\label{Analytic}
Let $f \in \R(X)$ be a real rational function and write it as 
$f=p/q$, where $p$ and $q$ are real polynomials with no 
common nonconstant real polynomial divisor. 
As a real valued function, $f$ is well defined and real analytic 
on the open set $U$ where $q \ne 0$. 
If $x \in \R^n$ and $q(x)=0$, 
then $x$ is a boundary point of $U$, 
but $f$ cannot be analytically extended to $x$.
\end{Lemma}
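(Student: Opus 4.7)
The first assertions are routine: the set $U = \{q \neq 0\}$ is open as the preimage under the continuous polynomial $q$ of $\R \setminus \{0\}$, and on $U$ the function $f = p/q$ is a quotient of polynomials with nonvanishing denominator, hence real analytic. The boundary-point claim follows from the standard fact that the real zero set of a nonzero polynomial in $\R^n$ has empty interior, which I would prove by an easy induction on $n$ (any fiber of a univariate polynomial is finite or, if the polynomial is identically zero on that fiber, is a lower-dimensional condition on the parameters).

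For the non-extension claim, the plan is to argue by contradiction. Suppose $g$ is a real analytic extension defined on a neighborhood $V$ of $x$. Then $qg = p$ holds on the dense open subset $U \cap V \subset V$, so by continuity $qg = p$ throughout $V$. The next move is to complexify: every real analytic function extends to a holomorphic function on some complex neighborhood, so pick a holomorphic extension $g_\C$ of $g$ on an open set $V_\C \subset \C^n$ containing $x$. Because $p$ and $q$ are (real) polynomials, the identity $q g_\C = p$ holds on the totally real slice $V \subset V_\C$, hence on all of $V_\C$ by the identity theorem for holomorphic functions. Consequently every complex zero of $q$ in $V_\C$ is also a zero of $p$.

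The contradiction will come from dimension theory on the complex side. Since $q \in \C[X]$ is nonzero and $q(x) = 0$, the point $x$ lies on some irreducible component $C$ of $V_\C(q)$, and Krull's principal ideal theorem forces $\dim_\C C = n-1$. By Lemma \ref{Bass}, $p$ and $q$ remain coprime in the UFD $\C[X]$, so the ideal $(p,q)$ is contained in no height-one prime; every minimal prime above it therefore has height at least two, and the complex Nullstellensatz gives $\dim_\C V_\C(p,q) \le n-2$. The previous paragraph shows $C \cap V_\C \subseteq V_\C(p,q)$, and since $C \cap V_\C$ is a nonempty Euclidean open subset of the irreducible variety $C$ it is Zariski dense in $C$, forcing $C \subseteq V_\C(p,q)$. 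That yields $n-1 = \dim_\C C \le n-2$, the desired contradiction.

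The main obstacle is conceptual rather than calculational: a purely real argument cannot succeed, because the real zero set of $q$ can be arbitrarily small (e.g., only the origin when $q = x^2 + y^2$ in $\R^2$), so one cannot in general produce real nearby points where $q = 0$ but $p \neq 0$ in order to force a blow-up of $f$ directly. Complexification, combined with Lemma \ref{Bass} to transport coprimality from $\R[X]$ to $\C[X]$, is exactly what makes the standard dimension count available.
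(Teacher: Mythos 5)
Your proof is correct and takes essentially the same approach as the paper: complexify the hypothetical analytic extension, use Lemma \ref{Bass} to get that $p$ and $q$ remain coprime over $\C$, and then observe that the complex zero set of $q$ near $x$ has codimension one while $\{p=q=0\}$ has codimension at least two, yielding the contradiction. The paper phrases the final step a bit more informally (``complex hypersurfaces cannot have isolated points, so there are nearby points with $q=0$ but $p\ne 0$''), whereas you run the dimension count through Krull's principal ideal theorem and Zariski density, but the underlying idea is identical.
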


\begin{proof}
All clear, except the issue of extending $f$ to $x$. 
Let $g$ be a real analytic extension to $x$. Let $B$ be a small 
Euclidean ball around $x$ in $\C^n$ on which the power series 
expansion for $g$ at $x$ converges absolutely and uniformly, 
defining a complex analytic function $\tilde{g}$ on $B$ that 
restricts to $g$ on $B'=B \cap \R^n$. Since $p=gq$ on 
$B' \cap U$, the same relation holds for the power series 
expansions at $x$. So $p=\tilde{g}q$ on $B$. 
By Lemma \ref{Bass}, $p$ and $q$ are relatively prime as complex
polynomials. That can also be proved, somewhat more simply, 
by using complex conjugation. 
So the complex hypersurfaces $p=0$ and $q=0$ 
have no common irreducible components. Since complex hypersurfaces, 
unlike real hypersurfaces, cannot have isolated points, 
there exist points $x'\in B$ arbitrarily close to $x$ 
that satisfy $q(x')=0$ and $p(x')\ne 0$. That contradicts 
$p=\tilde{g}q$. Even if attention is restricted to points at 
which $q$ is nonzero, the values of $\tilde{g}=p/q$ 
on such points would 
not be bounded in any neighborhood of $x$. That contradicts 
the analyticity of $\tilde{g}$, proving that the presumed analytic 
function $g$ cannot exist. 
\end{proof}

\begin{Ex}
The real rational function $f=(x^4 + y^4)/(x^2 + y^2)$ 
is not defined at the origin $(0,0)$. 
At every other point of the \xyp  it is defined and satisfies 
$0 < f(x,y) \le x^2 + y^2$, So setting $f(0,0)=0$ yields 
a unique continuous extension of $f$ to the entire plane. 
Although that extension is locally bounded at the origin, 
it is not real analytic there. 
\end{Ex}

\subsection{The birational case}\label{conj2}{\ \newline}

In this section a prefix, such as $k$-, will signal the specific field 
(or ring) of coefficients under consideration. The prefix will be 
omitted if clear from the context or irrelevant, and the terms 
'real' and 'complex' will usually be used instead of $\R$- and $\C$-.

\begin{Prop}\label{anyK}
Let $K$ be a field and $F: K^n \To K^n$ a rational map.
Suppose $F$ is defined over a subfield $k \subset K$. 
Then $F$ is $k$-birational ($k(F)=k(X)$) if, and only if, 
it is $K$-birational ($K(F)=K(X)$). 
\end{Prop}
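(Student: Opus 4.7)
The plan is as follows. The direction $k(F)=k(X) \Rightarrow K(F)=K(X)$ is immediate, since $x_1,\ldots,x_n \in k(F) \subseteq K(F)$ gives $K(X) \subseteq K(F)$ and the reverse inclusion is automatic. The substantive content is the converse.

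Before tackling it, I would dispose of the case in which $f_1,\ldots,f_n$ are algebraically dependent over $k$: any $k$-relation is a fortiori a $K$-relation, and conversely a $K$-relation expanded in a $k$-basis of $K$ becomes a $k$-relation by the linear-disjointness argument below. Thus algebraic dependence is invariant under base change, and in this case both claimed equalities fail (the transcendence degrees don't match) and the statement is vacuous. So I may assume $f_1,\ldots,f_n$ are algebraically independent over $k$, hence both $k[F]$ and $K[F]$ are polynomial algebras in the $f_i$.

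Now assume $K(F)=K(X)$, so each $x_i = P_i(f)/Q_i(f)$ with $P_i, Q_i \in K[Y_1,\ldots,Y_n]$ and $Q_i \ne 0$. Fix a $k$-basis $\{e_\lambda\}$ of $K$ and expand $P_i = \sum_\lambda e_\lambda P_i^{(\lambda)}$, $Q_i = \sum_\lambda e_\lambda Q_i^{(\lambda)}$ with $P_i^{(\lambda)}, Q_i^{(\lambda)} \in k[Y]$ (finitely supported). The identity $x_i Q_i(f) - P_i(f) = 0$ in $K(X)$ then rearranges to
\[\sum_\lambda e_\lambda\bigl(x_i Q_i^{(\lambda)}(f) - P_i^{(\lambda)}(f)\bigr) = 0,\]
with every parenthesized term in $k(X)$. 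Invoking the linear disjointness of $K$ and $k(X)$ over $k$ inside $K(X)$, each term must vanish individually, so $x_i = P_i^{(\lambda)}(f)/Q_i^{(\lambda)}(f)$ for any $\lambda$ with $Q_i^{(\lambda)} \ne 0$; such $\lambda$ exists since $Q_i \ne 0$, and $Q_i^{(\lambda)}(f) \ne 0$ by algebraic independence of the $f_j$. Thus each $x_i \in k(F)$, giving $k(F)=k(X)$.

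The only real obstacle is the linear-disjointness input, which I would verify directly rather than cite: any relation $\sum_\lambda e_\lambda b_\lambda = 0$ with $b_\lambda \in k(X)$ can, by choosing a common denominator in $k[X]$, be rewritten as $\sum_\lambda e_\lambda p_\lambda = 0$ in $K[X]$ with $p_\lambda \in k[X]$. Matching coefficients of each monomial in $x_1,\ldots,x_n$ (which form a $K$-basis of $K[X]$) produces finite $k$-linear combinations of the $e_\lambda$ that must vanish, and the $k$-independence of $\{e_\lambda\}$ forces every $p_\lambda$, hence every $b_\lambda$, to be zero. Everything else reduces to bookkeeping.
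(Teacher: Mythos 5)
Your proof is correct, and it takes a genuinely different route from the paper's. The paper disposes of this in one line: the condition $k(F)=k(X)$ means $k(X)$ is free of rank one over $k(F)$, and ``that dimension is invariant under faithfully flat extension, as when tensoring over $k$ with any field extension $K$.'' That is, the paper appeals to faithfully flat descent of module rank, implicitly identifying the relevant base change with the passage from $k(F)\subseteq k(X)$ to $K(F)\subseteq K(X)$. Your argument instead works entirely by hand: you expand the $K$-coefficient relation $x_iQ_i(f)=P_i(f)$ in a $k$-basis of $K$, invoke linear disjointness of $K$ and $k(X)$ over $k$ inside $K(X)$ to split it into $k$-coefficient relations, and then extract $x_i\in k(F)$ from any one of them. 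What the paper's route buys is brevity and a conceptual framing (descent), at the cost of suppressing the identification of $K\otimes_k k(X)$ (a localization of $K[X]$ at $k[X]\setminus\{0\}$) with $K(X)$ and of $K\otimes_k k(F)$ with $K(F)$, which is exactly the content you make explicit. What your route buys is a completely elementary, self-contained verification, including the linear-disjointness lemma proved from scratch via coefficient comparison in $K[X]$; it also yields as a byproduct the stronger fact that $[K(X):K(F)]=[k(X):k(F)]$, not merely the equivalence of the two degrees being one. One small remark: your preliminary reduction to the algebraically independent case is harmless but not strictly needed for the forward implications you prove -- once you assume $K(F)=K(X)$, the $f_i$ are automatically a transcendence basis of $K(X)/K$, hence algebraically independent over $K$ and a fortiori over $k$, so $Q_i^{(\lambda)}(f)\ne 0$ comes for free.
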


\begin{proof}
The birationality condition $k(F)=k(X)$ is equivalent to 
the assertion that $k(X)$ has dimension $1$ as a vector space
over $k(F)$.
But that dimension is invariant under faithfully flat extension, 
as when tensoring over $k$ with any \fe $K$. 
\end{proof}

So if $k \subseteq \R$, then $k$-birationality is also equivalent to $\R(F)=\R(X)$ or  $\C(F)=\C(X)$. That is, 
if $F$ has a purely algebraic rational inverse 
with complex coefficients
(ignoring any issues about where the inverse is defined in 
$\C^n$ or in $\R^n$), then it has one
with real coefficients that belong to $k$.

\begin{Lemma}\label{restrict}
Let $K$ be a field and $f: K^n \To K$ an everywhere defined  rational function. Suppose $f$ is defined over a subfield $k \subset K$. 
Then $f$ restricts to an everywhere defined  $k$-rational function from $k^n$ to $k$. 
\end{Lemma}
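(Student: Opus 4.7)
The plan is to invoke Theorem \ref{Consistency} and essentially read off the conclusion. First I would apply part (1) of the Consistency theorem to choose a single reduced representation $f = p/q$ with $p, q \in k[X]$ that remain relatively prime in $K[X]$. This is the canonical representative we will use on both sides.

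Next, I would use part (2) of Consistency to translate the hypothesis that $f$ is everywhere defined on $K^n$ into the purely algebraic statement that $q(x) \neq 0$ for every $x \in K^n$. In particular, since $k^n \subseteq K^n$, the polynomial $q$ has no zeros on $k^n$ either. So, viewing $p/q$ now as a $k$-rational function, the denominator $q \in k[X]$ is nowhere vanishing on $k^n$, and part (2) of Consistency (applied this time with $k$ as the coefficient field) says that the $k$-rational function $f$ is defined at every point of $k^n$.

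Finally, to verify the restriction lands in $k$ and is consistent with the original $f$, I would invoke parts (3) and (4): for $x \in k^n$, the value $p(x)/q(x)$ is computed identically whether we regard the coefficient field as $k$ or $K$, and it lies in the subfield of $K$ generated by $k$ and the coordinates of $x$, which is just $k$.

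There is no real obstacle here; the lemma is essentially a packaging of the Consistency theorem, with the only subtlety being to make sure we use one and the same reduced representative $p/q$ throughout so that "defined at $x$" means the same thing on both sides. Once that choice is fixed, everywhere-definedness on $K^n$ automatically forces everywhere-definedness on $k^n$, and the values automatically lie in $k$.
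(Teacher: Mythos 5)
Your proof is correct and takes essentially the same approach as the paper: both argue by invoking Theorem \ref{Consistency}, with yours simply spelling out the role of each part of that theorem in more detail than the paper's terse one-sentence version.
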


\begin{proof}
Considering $f$ as a  $k$-rational function, the 
consistency assertions of Theorem \ref{Consistency} imply 
that its domain of definition is $K^n \cap k^n = k^n$ and that its 
values there (necessarily in $k$) are those it had as a $K$-rational function. 
\end{proof} 

A similar conclusion applies to an everywhere defined map with any 
finite number of rational function components. While a rational 
bijection $K^n \To K^n$ defined over a subfield $k \subset K$ will 
restrict to an injection $k^n \To k^n$, that map does not have to 
be surjective. 

\begin{Ex}\label{x+x^3}
Let $f:\R \To \R$ be given by $y=f(x)=x+x^3$. The 
restriction of $f$ to $\Q$ is a map into, but not onto, $\Q$. 
For instance, $y=1$ is not a value by the rational root test. 
\end{Ex}

\begin{Lemma}\label{injective}
Let $F: \R^n \To \R^n$ be a continuous  
open semi-algebraic map. 
If $F$ is injective on a Zariski open subset of $\R^n$,
 then it is injective on all of $\R^n$. 
\end{Lemma}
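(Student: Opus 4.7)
The plan is to argue by contradiction. Suppose that $F$ is injective on a nonempty Zariski open set $U \subseteq \R^n$ but that $F(p) = F(q)$ for some pair of distinct points $p, q \in \R^n$. Choose disjoint Euclidean open neighborhoods $V_p$ of $p$ and $V_q$ of $q$. Because $F$ is an open map, both $F(V_p)$ and $F(V_q)$ are open subsets of $\R^n$, and they share the point $F(p) = F(q)$, so $W = F(V_p) \cap F(V_q)$ is a nonempty open set. The aim is to locate a point of $W$ whose preimages in $V_p$ and $V_q$ both lie in $U$, which will contradict injectivity there.

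Next I would bring in the two standard facts from semi-algebraic geometry that the paper is already using freely: a proper Zariski closed subset of $\R^n$ is a semi-algebraic set of dimension at most $n-1$, and the image of a semi-algebraic set under a semi-algebraic map has dimension no larger than the source. Applied to $V_p \setminus U$ and $V_q \setminus U$ (which are contained in the proper Zariski closed set $\R^n \setminus U$), this shows that $F(V_p \setminus U)$ and $F(V_q \setminus U)$ are semi-algebraic subsets of $\R^n$ of dimension strictly less than $n$. A standard consequence is that such sets have empty Euclidean interior in $\R^n$, hence are nowhere dense.

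A finite union of nowhere dense sets is nowhere dense, so $F(V_p \setminus U) \cup F(V_q \setminus U)$ cannot cover the nonempty open set $W$. Pick any $y \in W \setminus \bigl(F(V_p \setminus U) \cup F(V_q \setminus U)\bigr)$. Since $y \in F(V_p)$ there is $p' \in V_p$ with $F(p') = y$, and since $y \notin F(V_p \setminus U)$ we must have $p' \in U$. Symmetrically there is $q' \in V_q \cap U$ with $F(q') = y$. The sets $V_p$ and $V_q$ are disjoint, so $p' \ne q'$, yet $p', q' \in U$ and $F(p') = F(q')$, contradicting the hypothesis that $F|_U$ is injective.

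The main obstacle, if any, is the appeal to the two semi-algebraic facts. Both are entirely standard (see Bochnak–Coste–Roy), and the paper's narrative in this section already invokes the dimension of semi-algebraic sets and the Tarski–Seidenberg principle in the same spirit, so they can be cited rather than reproved. The rest of the argument is a clean open-map/density observation.
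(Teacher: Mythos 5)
Your proof is correct, and it relies on the same two ingredients as the paper's: the openness of $F$ and the semi-algebraic dimension bound on images (Tarski--Seidenberg). The organization differs a bit. The paper first builds a Zariski open set $U'\subseteq U$ on which every point has a \emph{globally} unique preimage: it takes the image of $\R^n\setminus U$, then the Zariski closure $Z$ of that image, then $F^{-1}(Z)$ (which has empty interior because $F$ is open, hence dimension $<n$), and removes its Zariski closure from $U$. Only then does it take two disjoint neighborhoods $U_a,U_b$ around a putative collision $a\neq b$, shrink them so $F(U_a)=F(U_b)$, and note $U_a\cap U'\ne\emptyset$ gives a contradiction. You skip the construction of $U'$ entirely: instead of cleaning up the domain, you work on the target side, observing that $F(V_p\setminus U)$ and $F(V_q\setminus U)$ are nowhere dense and so cannot cover the open set $W=F(V_p)\cap F(V_q)$, which yields a point $y\in W$ whose preimages in $V_p$ and $V_q$ both lie in $U$. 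This is a mild economy: you avoid the preimage step $F^{-1}(Z)$ (and the accompanying ``empty interior'' observation) at the cost of carrying two bad sets rather than one good $U'$. The paper's formulation does package a slightly stronger intermediate fact (on $U'$ preimages are globally unique), which is convenient for re-use, but for the purposes of this lemma your more direct route is equally valid.
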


\begin{proof}
Let $U$ be a  Zariski open subset of $\R^n$, such that $F$ 
is injective on $U$.  
The complement of $U$ in $\R^n$ is semi-algebraic (indeed algebraic), 
of maximum dimension at most $n-1$. By the general form of 
the Tarski-Seidenberg projection property, its image under $F$ 
is also semi-algebraic of maximum dimension at most $n-1$. 
So it is not Zariski dense. Let $Z$ be its Zariski closure. 
The set $F^{-1}(Z)$ is semi-algebraic. It also has empty interior, 
as otherwise $Z$ would contain an open set. So it has maximum 
dimension at most $n-1$, and hence is not Zariski dense. 
Let $Z'$ be its Zariski closure, and let $U'=U\setminus Z'$. 
Then $U'$ is a nonempty Zariski open subset of $\R^n$ and for 
every $x \in U'$, the point $y=F(x)$ has only one inverse 
image anywhere in $\R^n$. 
If  $F$ is not injective, let $a,b \in \R^n$ satisfy $a \ne b$ 
and $F(a)=F(b)$. 
Take disjoint open sets $U_a$ and $U_b$, such that 
$a \in U_a$ and $b \in U_b$. 
Since $F(U_a) \cap F(U_b)$ is open and $F$ is continuous,
one can shrink $U_a$ and $U_b$ so that they also satisfy
$F(U_a)=F(U_b)$. So for any $x \in U_a$, the point $y=F(x)$ 
has at least two inverse images. Since $U'$ is Zariski open, 
$U_a \cap U'\ne \emptyset$, and any point of intersection 
yields a contradiction. 
\end{proof}

Remark. The final part of the above proof is a specific case of arguments about the counting function $m(x)$ in section \ref{conj}.

\begin{Lemma}\label{inj}
Let $F: \R^n \To \R^n$ be an everywhere defined real rational  
map. If $F$ is both birational and an open map, then 
$F$ is injective. 
\end{Lemma}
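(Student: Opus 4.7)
The plan is to extract a nonempty Zariski open subset of $\R^n$ on which $F$ is pointwise invertible, and then invoke Lemma \ref{injective} to spread injectivity to all of $\R^n$.

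First, since $F$ is birational over $\R$, there is a rational map $G\colon \R^n \dashrightarrow \R^n$ with $G \circ F = \mathrm{id}$ as an equality in $\R(X)^n$. Write each component of $G$ as a reduced fraction and let $V \subseteq \R^n$ be the Zariski open set on which every component of $G$ is defined (the complement of the union of the denominator zero sets). Since $F$ is rational and Zariski continuous, $F^{-1}(V)$ is Zariski open in $\R^n$; I need it to be nonempty. Here is where openness enters: because $F$ is an open map and its domain $\R^n$ is nonempty, $F(\R^n)$ contains a nonempty Euclidean open set, hence is not contained in any proper algebraic subset of $\R^n$, so $F(\R^n)$ is Zariski dense. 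The intersection of the nonempty Zariski open set $V$ with the Zariski dense set $F(\R^n)$ is therefore nonempty, which forces $F^{-1}(V) \neq \emptyset$.

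Next, on $F^{-1}(V)$ the composition $G \circ F$ is defined pointwise, and Theorem \ref{Consistency} guarantees that the pointwise value of a composition of rational maps agrees with the value of the composed rational map wherever both make sense. Since $G \circ F$ equals the identity as a rational map, we conclude $G(F(x)) = x$ for every $x \in F^{-1}(V)$. In particular, $F$ restricted to the Zariski open set $F^{-1}(V)$ is injective.

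Finally, everywhere defined rational maps are continuous and semi-algebraic, so the hypotheses of Lemma \ref{injective} are in force: $F$ is a continuous, open, semi-algebraic map that is injective on a Zariski open subset of $\R^n$. Lemma \ref{injective} immediately yields injectivity of $F$ on all of $\R^n$, which is what we wanted. The only subtle step is verifying $F^{-1}(V) \neq \emptyset$; without openness this could conceivably fail, so the openness hypothesis is used precisely to secure Zariski density of the image.
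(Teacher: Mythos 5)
Your proof is correct and follows essentially the same route as the paper's: use birationality to produce a Zariski open subset of $\R^n$ on which $F$ is injective, then invoke Lemma \ref{injective} to spread injectivity to all of $\R^n$. You spell out details the paper treats as immediate (the paper simply asserts the existence of Zariski open $U$, $V$ with $F$ mapping $U$ bijectively onto $V$, whereas you explicitly construct $V$ as the domain of the rational inverse $G$ and use openness of $F$ to secure $F^{-1}(V)\neq\emptyset$), which is a reasonable thing to check. One small misattribution: the step where $G(F(x))=x$ holds pointwise on $F^{-1}(V)$ is not really what Theorem \ref{Consistency} gives you (that theorem is about invariance of evaluation under change of coefficient field); the fact you need is the standard compatibility of pointwise evaluation with algebraic composition of rational maps wherever both are defined, which is true but should be cited as such rather than as a consequence of Theorem \ref{Consistency}.
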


\begin{proof}
By birationality, there exist Zariski open subsets $U$ and $V$ of 
$\R^n$, such that $F$ maps $U$ bijectively onto $V$. 
As $F$ is injective on $U$, it satisfies the hypotheses of 
Lemma \ref{injective}, so is injective on $\R^n$. 
\end{proof}

\begin{Thm}\label{bicase} ([Birational case of the RRJC)
Let $F: \R^n \To \R^n$ be an everywhere defined real rational  
map with nowhere vanishing \J determinant.
If $F$ is birational, then it has 
an everywhere defined real rational  inverse.
And in that case, if $F$ is defined over a subfield $k \subset \R$, 
then its restriction to $k^n$ is a $k$-rational everywhere 
defined bijection of $k^n$ onto $k^n$, and that also holds for 
its inverse.
\end{Thm}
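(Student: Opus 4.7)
The plan is to establish injectivity of $F$ first, then promote this to bijectivity using the cited real injective-implies-surjective result, then produce an everywhere defined rational inverse by matching the algebraic inverse with the analytic one, and finally descend to the subfield $k$.

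First I would observe that since $j(F)$ vanishes nowhere, $F$ is a local diffeomorphism and in particular an open continuous (hence semi-algebraic) map. Combined with birationality, Lemma \ref{inj} yields that $F$ is injective on all of $\R^n$. The result cited as \cite{InjectiveReal} in Section \ref{conj}, that an everywhere defined real rational injection of $\R^n$ to itself is automatically surjective, then upgrades this to a bijection. Since $F$ is a bijective local diffeomorphism of $\R^n$ onto $\R^n$, it is a global real analytic diffeomorphism and admits a globally defined real analytic inverse $F^{-1}: \R^n \To \R^n$.

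Next, to produce the rational inverse, I would use $\R$-birationality $\R(F)=\R(X)$ to fix a tuple $G=(g_1,\ldots,g_n)$ with $g_i \in \R(X)$ satisfying $G \circ F = \operatorname{id}$ as an identity of rational maps. On the nonempty Zariski open set where $G$ is defined, $G$ must coincide with the analytic inverse $F^{-1}$. Hence the analytic component $(F^{-1})_i$ provides a real analytic extension of $g_i$ to every point of $\R^n$. But by Lemma \ref{Analytic}, a reduced fraction representation of a rational function cannot fail to be defined at a point and yet admit a real analytic extension there. Therefore each $g_i$ is defined at every point of $\R^n$, so $G$ is an everywhere defined real rational inverse of $F$. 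I expect this to be the main obstacle: one has both an algebraic inverse defined only on a Zariski open subset and a globally defined analytic inverse, and the task is to force them to be literally the same everywhere defined rational map; Lemma \ref{Analytic} is exactly the tool that closes the gap.

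Finally, for the subfield statement, Proposition \ref{anyK} shows that $k$-birationality is equivalent to $\R$-birationality, so $G$ already admits a representation with coefficients in $k$. Applying Lemma \ref{restrict} componentwise, both $F$ and $G$ restrict to everywhere defined $k$-rational maps $k^n \To k^n$. The consistency Theorem \ref{Consistency} then guarantees that evaluation of a rational function over $\R$ at a point of $k^n$ agrees with its evaluation as a $k$-rational function, so the identities $G \circ F = \operatorname{id}$ and $F \circ G = \operatorname{id}$ already verified over $\R$ hold pointwise on $k^n$. Hence each restriction is a bijection of $k^n$ onto itself, completing the proof.
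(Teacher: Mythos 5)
Your proof is essentially the same as the paper's: open plus birational gives injectivity via Lemma \ref{inj}, the Bia{\l}ynicki-Birula and Rosenlicht theorem gives surjectivity, the global analytic inverse is shown to be rational via Lemma \ref{Analytic}, and the subfield statement follows from Proposition \ref{anyK} and Lemma \ref{restrict}. The only cosmetic difference is that you invoke Theorem \ref{Consistency} explicitly in the final step, which the paper folds into its ``argue as before.''
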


\begin{proof}
$F$ is an open map, so it is injective by Lemma \ref{inj}.
Hence it is surjective by the  Bia{\l}ynicki-Birula and Rosenlicht Theorem \cite{InjectiveReal}. 
Since it is locally bianalytic, $F$ has a global inverse, 
$F^{-1}$, that is real analytic on all of $\R^n$. 
$F^{-1}$ is a real analytic extension to $\R^n$ of the 
rational inverse of $F$. By Lemma \ref{Analytic}, each 
component of $F^{-1}$ is an everywhere defined real rational function 
and so $F^{-1}$ is an everywhere defined real rational map. 
If $F$ is defined over a subfield $k \subset \R$, 
start with a rational inverse with coefficients in $k$, as is possible 
by Proposition \ref{anyK}. 
Argue as before, then apply Lemma \ref{restrict} componentwise 
to both $F$ and $F^{-1}$. 
\end{proof}

Remark. In \cite{PolynomialRational}, \p maps $F:\R^n \To \R^n$ 
that map $\R^n$ bijectively onto $\R^n$ are considered, and the question is raised 
of when the inverse is rational. If so, the inverse is everywhere 
defined on $\R^n$ and $F$ is called a polynomial-rational bijection (PRB) of $\R^n$. 
A key technical result is that a \p bijection is a PRB if its 
natural extension to a \p map $\C^n \To \C^n$ maps only 
real points to real points. A PRB $F$ has a nowhere vanishing \J determinant $j(F)$. Conversely, it is shown that a nowhere 
vanishing $j(F)$ alone suffices to establish that a \p map 
$F:\R^n \To \R^n$ of degree two is a bijection and a PRB. 
A related but stronger condition is defined and shown to be sufficient, but not necessary, for \p maps of degree greater than two.

\subsection{Promoted {SRJC} cases}\label{conj3}{\ \newline}

This section is primarily concerned with some known special  cases 
in which the SRJC holds on the basis of topological considerations 
implying injectivity, and which therefore generalize effortlessly to 
the RRJC context. The special cases appear in the previously cited 
paper \cite{geometry} by Zbigniev Jelonek and in a fairly recent 
note \cite{NewProperness} by Christopher I. Byrnes and Anders Lindquist. 

Let $F:X \To Y$ be a continuous map of topological  manifolds.  Let $A(F)$ be the set of points $y \in Y$ at which $F$ is not proper, and let 
$B(F) = F^{-1}(A(F))$. Recall that $A(F)$ is closed, that the 
restriction of $F$ to the induced map $X \setminus B(F) \To Y \setminus A(F)$ is proper, and that $A(F)$ is the smallest subset of $Y$ with those properties. If $F(X)$ is open, then its topological boundary $\partial F(X)$ is contains in $A(F)$. In nice enough cases, removing subsets of codimension $i$ does not affect homotopy groups in dimensions less than $i-1$.  Indeed, in  \cite[Lemma 8.1]{geometry}, Jelonek proves that if $A$ is a closed semi-algebraic subset of $\R^n$, then $\R^n \setminus A$ is connected if $A$ is of codimension greater than one and simply connected if it is of codimension greater than two. The usual conventions  apply, namely that the empty set has dimension $-\infty$ and 
codimension $+\infty$.

\begin{Thm}\label{promoted}
Let $F:\R^n \To \R^n$ be a real rational everywhere defined map 
with nowhere vanishing \J determinant. 
Let $A(F)$ be the set of points in the codomain at which $F$ is 
not proper. Then the following are equivalent:
\begin{enumerate}
\item $F$ has a global real analytic inverse, 
\item $A(F)=\emptyset$,
\item $\dim(A(F))  < n-2$,
\item  $A(F) \cap F(\R^n)=\emptyset$,
\item $A(F)=\partial F(\R^n)$.
\end{enumerate} 
\end{Thm}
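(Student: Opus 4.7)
The plan is to close the equivalence via the cycle $(1) \Rightarrow (2) \Rightarrow (3) \Rightarrow (2) \Rightarrow (1)$ together with the auxiliary cycle $(2) \Rightarrow (4) \Leftrightarrow (5) \Rightarrow (2)$. The easy directions are formal: a real analytic inverse makes $F$ a homeomorphism and hence proper, so $(1) \Rightarrow (2)$; the empty set has dimension $-\infty$, giving $(2) \Rightarrow (3)$; trivially $(2) \Rightarrow (4)$; and the topological boundary of an open set is disjoint from it, so $(5) \Rightarrow (4)$. For $(2) \Rightarrow (1)$, if $A(F)=\emptyset$ then $F$ is a proper local diffeomorphism of the simply connected manifold $\R^n$ onto itself, hence a covering map, which is one-sheeted since the base is simply connected, so $F$ is a global real analytic diffeomorphism.

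For $(3) \Rightarrow (2)$, the point is that $F$, being a local diffeomorphism, preserves the semi-algebraic dimension of $A(F)$, so $\dim F^{-1}(A(F)) = \dim A(F) < n-2$ as well. Jelonek's Lemma 8.1 (invoked just above the theorem) then makes both $\R^n \setminus A(F)$ and $\R^n \setminus F^{-1}(A(F))$ simply connected, and the restriction $F : \R^n \setminus F^{-1}(A(F)) \to \R^n \setminus A(F)$ is a proper local diffeomorphism between connected simply connected manifolds, hence a one-sheeted covering, hence a homeomorphism. Thus $F$ is injective on a Zariski open dense subset of $\R^n$, and Lemma \ref{injective} upgrades this to injectivity on all of $\R^n$. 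The Bia{\l}ynicki-Birula--Rosenlicht theorem then gives surjectivity, so $F$ becomes a global homeomorphism and $A(F)=\emptyset$.

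The equivalence $(4) \Leftrightarrow (5)$ is almost formal. The inclusion $\partial F(\R^n) \subseteq A(F)$ holds in general: at a boundary point $y \notin A(F)$, one may choose a small connected neighborhood $V$ of $y$ with $\overline{F^{-1}(V)}$ compact; then $F(F^{-1}(V))$ is simultaneously open (since $F$ is open) and closed in $V$ (as a compact image), hence equal to $V$, absurdly placing $y$ in $F(\R^n)$. Under (4) the reverse inclusion follows because every $y \in A(F)$ is the limit of some $F(x_k)$ with $x_k$ escaping to infinity, and by (4) that limit is not in $F(\R^n)$, so $y \in \overline{F(\R^n)} \setminus F(\R^n) = \partial F(\R^n)$.

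The substantive step, and the one I expect to be the main obstacle, is $(4) \Rightarrow (2)$. Under (4) any compact $K \subseteq F(\R^n)$ is disjoint from $A(F)$ and so $F^{-1}(K)$ is compact by the definition of $A(F)$; thus $F : \R^n \to F(\R^n)$ is a proper local diffeomorphism onto a connected open subset of $\R^n$, hence a covering map of some finite degree $d$ bounded by the degree of the field extension $\R(F) \subseteq \R(X)$. Since $\R^n$ is simply connected it is the universal cover of $F(\R^n)$, and $|\pi_1(F(\R^n))|=d$; the resulting deck group of order $d$ acts freely on the contractible manifold $\R^n$, which forces $d=1$ by a standard topological argument. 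For instance, multiplicativity of the compactly supported Euler characteristic over finite coverings gives $(-1)^n = \chi_c(\R^n) = d \cdot \chi_c(F(\R^n))$, and since the right-hand side must be an integer of absolute value one, $d=1$. So $F$ is injective, the Bia{\l}ynicki-Birula--Rosenlicht theorem again yields surjectivity, and $A(F) = \emptyset$. The delicate point is recognizing that (4) is exactly what converts properness over $\R^n \setminus A(F)$ into properness onto the image, which is what unlocks the covering-space machinery.
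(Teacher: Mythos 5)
Your proof is correct, and its overall architecture matches the paper's: the topological Hadamard theorem gives $(1) \Leftrightarrow (2)$, the codimension-$\geq 3$ hypothesis plus Jelonek's Lemma~8.1 drives $(3)\Rightarrow(2)$ via a trivial covering, the boundary bookkeeping handles $(4)\Leftrightarrow(5)$, and covering-space theory handles $(4)\Rightarrow(2)$. The one genuine departure is your argument that the covering degree $d$ equals~$1$ in the $(4)\Rightarrow(2)$ step. The paper, following Byrnes and Lindquist, invokes P.~A.~Smith theory: since $\pi_1(F(\R^n))$ has order $d$, it contains an element of prime order unless $d=1$, and a nontrivial cyclic group of prime order cannot act freely on the contractible manifold $\R^n$. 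You instead use multiplicativity of the compactly supported Euler characteristic over a finite covering: $(-1)^n = \chi_c(\R^n) = d\cdot\chi_c(F(\R^n))$, where $\chi_c(F(\R^n))$ is a well-defined integer because $F(\R^n)$ is semi-algebraic, forcing $d=1$. This is a real alternative, arguably more elementary in spirit (it trades the Smith fixed-point theorem for additivity of $\chi_c$ over semi-algebraic stratifications plus the fact that a rank-$d$ local system contributes a factor of $d$), though it does lean on the semi-algebraic machinery already present in the paper. Two small stylistic remarks: in $(3)\Rightarrow(2)$ it is cleaner to assert $\dim F^{-1}(A(F)) \leq \dim A(F)$ (which is what the local-homeomorphism argument literally gives, and is all you need) rather than equality; and you only need the base $\R^n\setminus A(F)$ simply connected together with the total space connected, not simple connectivity of both, to conclude the covering is trivial -- though both happen to hold here. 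Neither affects correctness.
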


\begin{proof}
The well known so-called topological Hadamard theorem 
states that a local homeomorphism $\R^n \To \R^n$ is a 
homeomorphism if, and only if, it is proper. That yields the 
equivalence of (1) and (2) in the current context, since the 
\J condition implies that $F$ is locally real bianalytic. 
The equivalence therefore does not require the
 surjectivity theorem (ST) of \cite{InjectiveReal} 
for injective rational maps  $\R^n \To \R^n$. The essential points are that a proper local homeomorphism 
of connected manifolds (necessarily of the same dimension) is a 
covering map (necessarily surjective) and that the base, $\R^n$, 
is simply connected and so has no nontrivial connected cover. 

Obviously, (2) implies (3) and (4). It implies (5) as well, 
because (1) implies that $\partial F(\R^n)=\emptyset$.

In case (3), let $B(F) = F^{-1}(A(F))$. The induced map 
$\R^n \setminus B(F) \To \R^n \setminus A(F)$ is a proper 
local homeomorphism. 
Since $\dim(A(F)) < n-2$, the base is simply connected. 
Because $F$ is a  local homeomorphism, $\dim(B(F)) < n-2$. 
So the total space is certainly connected. It follows that $F$ is 
injective on $\R^n \setminus B(F)$. $B(F)$ is not Zariski dense, 
so $F$ is injective on a Zariski open subset of $\R^n$. 
By Lemma \ref{injective} it is injective on $\R^n$. 
Finally, use the ST to conclude that $F$ is also surjective and 
therefore (1) holds. The result that (3) implies (1) in the SRJC context (that is, for polynomial maps with nowhere vanishing \J determinant) 
is precisely what is proved in \cite[Thm 8.2]{geometry}, and 
Jelonek's proof is the model for  the one presented here.

In case (4), since $A(F)$ is contained in   the closure of 
the image of $F$, the condition $A(F) \cap F(\R^n)=\emptyset$
amounts to saying that the map of  $\R^n$ onto its image 
is proper.
The main result of \cite{NewProperness} is that the standard complex 
JC holds for polynomial maps that are proper as maps onto their image. 
In a remark at the end of the note, (4) is proved to imply (1) 
in the SRJC context. Briefly, (4) implies that $\R^n$ is a 
universal covering space, of finite degree $d$, of $F(\R^n)$. 
By well known results of the branch of topology called P. A. Smith 
theory, there are no fixed point free homeomorphisms of $\R^n$ 
onto itself of prime period. But the fundamental group 
$\pi_1 (F(\R^n))$ is of order $d$, and contains an element of 
prime period unless $d=1$. So $d=1$, $F$ is injective, and (1) 
follows as in case (3), by using the ST. The assumption that $F$ 
is polynomial, rather than just real analytic, is used at only two points in 
the proof. First, it ensures that the degree of the covering map is 
finite. Second, it allows the ST to be applied. Rationality is 
sufficient in both situations, so (4) implies (1) in the RRJC 
context as well.

Case (5) is equivalent to case (4) because $F(\R^n)$ is open in 
$\R^n$, hence $\partial F(\R^n) \subseteq A(F)$ and $A(F)$ is 
contained in the disjoint union of $F(\R^n)$ and 
$\partial F(\R^n)$. 
\end{proof}

 Apart from the two special cases above, there are some closely related issues worth noting. Continue to assume that 
$F:\R^n \To \R^n$ is a real rational everywhere defined map 
with nowhere vanishing \J determinant, and, recall the general 
properties of such maps described in section \ref{conj}. 
$A(F)$ has 
positive codimension and $\R^n \setminus A(F)$ is the 
disjoint union of finitely many connected open subsets of $\R^n$, 
each of which is either entirely contained in the image of $F$ or has empty inverse image. 
Clearly $F(\R^n)$ is dense in $\R^n$ if, and only if, none of them 
has an empty inverse image. 
If $\dim(A(F))<n-1$ then $\R^n \setminus A(F)$ is connected, 
so it has only a single connected component and $F$ has dense 
image. 
The codimension at least two condition (CD2) is of particular interest for 
dimension $n=2$. In that case $A(F)$ is either empty or a finite 
set of points. If $F$ is polynomial, then $A(F)$ is $\R$-uniruled, only 
$A(F)=\emptyset$ is possible, and so the SRJC holds if CD2 and 
$n=2$ are true \cite[Section 8]{geometry}. That line of 
reasoning is not available for rational maps.

The condition that the image of $F$ is dense (DI) is strictly weaker 
than CD2, as shown by considering  {P}inchuk  maps. 
But even so,  it has important implications for the coimage,   $\R^n \setminus F(\R^n)$ of $F$. The coimage is always closed and semi-algebraic. If DI is true, then 
each connected component of, and hence all of, 
 the complement of $A(F)$ is contained in the image of $F$. 
Equivalently, the coimage of $F$ is contained in $A(F)$. 
Since $A(F)$ has codimensiom at least one, so does the coimage. 
No example is known for which DI is false, in which case the 
coimage would have codimension zero. Of course, if the \J 
condition is dropped, there are examples aplenty, such as 
$y=f(x)=x^2$. Combining several results yields

\begin{Thm}\label{DI}
Let $F:\R^n \To \R^n$ be a real rational everywhere defined map 
with nowhere vanishing \J determinant. 
Let $A(F)$ be the set of points in the codomain at which $F$ is 
not proper, and let 
$B(F) = F^{-1}(A(F))$.
Then $F$ has dense image if, and only if, the coimage of 
$F$ is contained in $A(F)$. And in that case, $A(F)$ is the 
disjoint union of the coimage and $F(B(F))$. 
If $F$  has dense image, but is not surjective, then 
the coimage and $F(B(F))$ are both nonempty, so the  coimage 
is a nonempty, closed, semi-algebraic, proper subset of $A(F)$. 
\end{Thm}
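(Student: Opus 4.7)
The plan is to leverage the structural description of $\R^n \setminus A(F)$ already developed in Section \ref{conj} together with Theorem \ref{promoted}. All three claims will follow from a careful case analysis of where points of $A(F)$ sit relative to $F(\R^n)$, with no new geometric input needed.

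First I would recall that $\R^n \setminus A(F)$ decomposes as a finite disjoint union of connected open semi-algebraic components, each of which is either entirely contained in $F(\R^n)$ (with the induced map from its preimage being a finite covering) or has empty inverse image under $F$. From this, $F$ has dense image if and only if every such component lies in $F(\R^n)$, if and only if $\R^n \setminus A(F) \subseteq F(\R^n)$; the latter is equivalent to the coimage $\R^n \setminus F(\R^n)$ being contained in $A(F)$. That establishes the first biconditional. Assuming density of the image, I would then obtain the decomposition $A(F) = (\text{coimage}) \sqcup F(B(F))$ by splitting $A(F)$ according to whether a point lies in $F(\R^n)$. If $y \in A(F) \cap F(\R^n)$, then any preimage $x$ of $y$ automatically lies in $F^{-1}(A(F)) = B(F)$, so $y \in F(B(F))$; the reverse inclusion $F(B(F)) \subseteq A(F)$ is immediate from $B(F) = F^{-1}(A(F))$. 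A point of $A(F)$ not in $F(\R^n)$ is in the coimage, which is contained in $A(F)$ by hypothesis. Disjointness of the two pieces is clear since $F(B(F)) \subseteq F(\R^n)$.

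For the last claim, I would first note that the coimage is closed (because $F$ is locally diffeomorphic, hence $F(\R^n)$ is open), semi-algebraic (by Tarski--Seidenberg applied to $F(\R^n)$), and nonempty precisely when $F$ fails to be surjective. What remains is to verify that the coimage is a \emph{proper} subset of $A(F)$, equivalently that $F(B(F))$ is nonempty. If $F(B(F)) = \emptyset$, then $B(F) = \emptyset$, hence $A(F) \cap F(\R^n) = \emptyset$; but condition (4) of Theorem \ref{promoted} would then yield a global real analytic inverse to $F$, contradicting non-surjectivity. So $F(B(F))$ is nonempty, and by the decomposition in the second claim the coimage is a proper closed semi-algebraic subset of $A(F)$. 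The only genuine obstacle is organizational, namely keeping the case analysis airtight; all substantive tools — the covering-map picture of $F$ over $\R^n \setminus A(F)$, basic semi-algebraic geometry, and the equivalences in Theorem \ref{promoted} — are already in hand.
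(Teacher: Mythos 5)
Your proof is correct and follows essentially the same route as the paper: both derive the biconditional from the component structure of $\R^n \setminus A(F)$ established earlier in the section, both get the disjoint decomposition $A(F) = (\text{coimage}) \sqcup F(B(F))$ by sorting points of $A(F)$ according to membership in $F(\R^n)$, and both obtain properness of the inclusion by appealing to condition (4) of Theorem \ref{promoted}. No substantive divergence.
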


\begin{proof}
If $F$ has dense image, then the preceding paragraph shows that the coimage, which is always closed and semi-algebraic, is contained in $A(F)$. 
The converse is clear. 
It is nonempty if $F$ is not surjective. If it were all of $A(F)$, then $A(F)$ would be disjoint from the image of $F$, and so $F$ would have an inverse by Theorem \ref{promoted}; and hence be 
surjective. If $y \in A(F)$, then it is either in the image of 
$F$, and so is in $F(B(F))$, or it is not, and so is in the 
coimage. 
\end{proof}

In the complex JC context, it is well known that the coimage 
has complex codimension at least two. Briefly, the reasoning is as follows.  Since the coimage is closed and constructible, if it has
codimension less than two it  
contains an irreducible hypersurface $h=0$, $h \circ F$ vanishes 
nowhere and so is constant, contradicting the algebraic 
independence of the components of $F$. 
In the SRJC and RRJC contexts, there are no parallel  results for 
the real codimension of the coimage, even if the map has dense 
image.

\subsection{Dense images}\label{conj4}{\ \newline}

Let $F:\R^n \To \R^n$ be an everywhere defined real rational map. 
There is a simple algebraic criterion that guarantees that $F$ has a dense image, and it works even with a weaker \J condition. In this section, drop the 
assumption that $j(F)$ vanishes nowhere on $\R^n$, but do 
require that $j(F)$ is not identically zero on $\R^n$. 
As shown in section \ref{conj}, that is enough to ensure that 
the components of $F$ are algebraically independent and hence
$\R(X)$ is an algebraic extension of $\R(F)$ of finite degree. 
To facilitate a geometric view of this situation , introduce 
coordinates $y_1,\ldots,y_n$ in the codomain of $F$ and think of 
the map as given by $y_i=f_i(\vars)$, identifying 
$\R(y_1,\ldots,y_n)=\R(Y)$ with $\R(F)$. 
Let $d$ be the degree of the field extension, and $t \in \R(X)$ a primitive 
element (generator) over $\R(Y)$. 
Then $t$ is a root in $\R(X)$ of a degree $d$ irreducible polynomial 
$R(T)$ with coefficients in the polynomial ring $\R[Y]$, such that 
no nonconstant polynomial in $\R[Y]$ is a common divisor of all the 
coefficients. 
$R(T)$ is unique up to a nonzero real constant factor, and is
obtained from the monic minimal 
polynomial of $t$ over $\R(Y)$ by
writing its coefficients as reduced fractions and then
multiplying the whole polynomial by a least common multiple 
of the denominators. 
Write $R(y)(T)$ for the polynomial with real coefficients obtained by 
evaluating the coefficients at a point $y \in \R^n$. 
$R$ is also irreducible in $\R[Y,T]$, and so 
its set of zeros is an affine irreducible variety $V$ in $\R^{n+1}$. 
Use $y_1,\ldots,y_n,z$ as coordinates in $\R^{n+1}$. 
Clearly, $F$ factors as the rational map $(F,t):\R^n \To V$ 
followed by the projection $p_n:V \To \R^n$ onto the first $n$ components. 
$(F,t)$ is actually birational (by construction), but not necessarily everywhere defined, because 
$z=t(x)$ may not be defined at all points $x \in \R^n$. 
The projection map $p_n:V \To \R^n$ is regular, 
as it corresponds to the algebra homomorphism  $\R[Y] \subseteq \R[V] = \R[Y,T]/(R(T))$. 
The discriminant $D$ of $R(T)$ lies in the 
coefficient ring $\R[Y]$. 
Up to a nonzero factor in $\R(Y)$, it is 
the product of the squares of the differences of the 
roots of $R(T)$ in a splitting field. 
By {G}alois theory all the roots are primitive elements. 
The derivative of $R(T)$ with respect to $T$, 
a polynomial of degree $d-1$ in $T$,  would 
be zero at a repeated root. So all the roots are simple, and 
so $D$ is nonzero. 
There is a universal formula for the discriminant of a 
polynomial in one variable of a given fixed degree 
in terms of its coefficients. 
However, the formula applies only if the degree is actual, 
not just formal; that is, the leading coefficient must be 
nonzero. 
So $D(y)$ is the discriminant of $R(y)(T)$, 
provided that $y$ is a point at which the leading 
coefficient of $R(T)$ does not vanish.

\begin{Lemma}\label{roots}
There is a nonempty Zariski open subset $U$ of $\R^n$, such that 
for each $y \in U$ all the following hold:
\begin{enumerate}
\item $R(y)(T)$ has degree $d$,
\item the roots, real or complex, of $R(y)(T)$ are distinct 
($D(y)\ne 0$),
\item the number of inverse images of $y$ under $F$ equals 
the number of real roots of $R(y)(T)$,
\item for each $x \in R^n$ with $y=F(x)$, $t(x)$ is defined, 
and it is a different real root of $R(y)(T)$ for each different $x$. 
\end{enumerate}
\end{Lemma}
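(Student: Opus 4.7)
The plan is to exhibit $U$ as the complement in $\R^n$ of a finite union of proper Zariski closed sets, each handling one failure mode for conditions (1)--(4). The key tools are birationality of $(F,t)\colon \R^n \to V$, separability of $\R(Y)\subseteq \R(X)$ in characteristic zero (so that the discriminant $D\in\R[Y]$ is a nonzero polynomial, as the excerpt already establishes), and Tarski--Seidenberg to push semi-algebraic sets of dimension less than $n$ down to proper Zariski closed subsets of $\R^n$. Conditions (1) and (2) are handled directly: let $L\in\R[Y]$ be the leading coefficient of $R(T)$ in $T$, and set $U_1=\{y:L(y)\ne 0\}$ and $U_2=\{y:D(y)\ne 0\}$. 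Both are nonempty and Zariski open, and the conditions $\deg_T R(y)(T)=d$ and ``all roots simple'' hold on $U_1\cap U_2$.

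For (3) and (4) I exploit that $t$ is a primitive element. Write $t=a/b$ in reduced form with $a,b\in\R[X]$. The image $F(\{x:b(x)=0\})$ is semi-algebraic of dimension less than $n$; let $Z_3$ be its Zariski closure, a proper algebraic subset of $\R^n$. For $y\notin Z_3$, every $x\in F^{-1}(y)$ satisfies $b(x)\ne 0$, so $t(x)$ is defined, and the identity $R(F,t)=0$ in $\R(X)$ descends pointwise to $R(y,t(x))=0$; hence each $x$ in the fiber contributes a real root of $R(y)(T)$. Since $\R(X)=\R(Y)(t)$, each coordinate admits a representation $x_i=P_i(F,t)/H(F)$ with $P_i\in\R[Y,T]$ of $T$-degree less than $d$ and $H\in\R[Y]$ nonzero. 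Setting $U_H=\{y:H(y)\ne 0\}$ and working inside $U_H$, two distinct preimages $x\ne x'$ of $y$ must yield distinct $t(x)\ne t(x')$, since otherwise the formulas $x_i=P_i(y,t(x))/H(y)$ would force $x=x'$.

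For the surjectivity direction of (3)--(4), consider the inverse birational map $\Phi\colon V \to \R^n$ given by $\Phi(y,z)=(P_1(y,z)/H(y),\ldots,P_n(y,z)/H(y))$. By construction $(F,t)\circ\Phi$ is the identity on a Zariski dense open subset $V_0\subseteq V\cap\{H\ne 0\}$; the complement $V\setminus V_0$ is a proper Zariski closed subset of the irreducible variety $V$, hence of dimension less than $n$, and its image under the projection $p_n\colon V\to\R^n$ is semi-algebraic of dimension less than $n$ by Tarski--Seidenberg. Let $Z_4$ denote the Zariski closure of that image, a proper algebraic subset of $\R^n$. For $y$ outside $Z_4\cup(\R^n\setminus U_H)$ and any real root $z$ of $R(y)(T)$, the point $x:=\Phi(y,z)$ is a well defined element of $\R^n$ satisfying $F(x)=y$ and $t(x)=z$, providing a preimage for every real root. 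Taking $U:=U_1\cap U_2\cap U_H\cap(\R^n\setminus Z_3)\cap(\R^n\setminus Z_4)$ yields a nonempty Zariski open set on which all four conclusions hold simultaneously.

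The main obstacle is the bookkeeping in the last step: the natural exceptional loci live on $V$ rather than on $\R^n$, and converting them into genuine Zariski closed subsets of the codomain requires verifying that their images under $p_n$ really have dimension less than $n$ and that taking real Zariski closures does not accidentally fill out all of $\R^n$. Once Tarski--Seidenberg is deployed to manage the real semi-algebraic dimensions, the rest follows routinely from the mutually inverse birational identities between $(F,t)$ and $\Phi$.
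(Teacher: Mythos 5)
Your argument is correct and follows essentially the same strategy as the paper's proof: use the birational correspondence between $\R^n$ and the root variety $V$, push the various exceptional loci forward via $F$ or the projection $p_n$, take Zariski closures, and remove them together with the zero loci of the leading coefficient and discriminant. The paper packages the bad set on the domain side and the bad set on $V$ as $\R^n\setminus A$ and $V\setminus B$ for a single biregular isomorphism $A\to B$ rather than splitting into your $Z_3$, $U_H$, $Z_4$, but the mechanism and the final intersection argument are the same.
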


\begin{proof}
The function field of the 
variety $V \subset \R^{n+1}$ is $\R(Y)[t]=\R(F)[t]$, so 
$(F,t)$ is birational. Let $A$ and $B$ be Zariski open subsets 
of $\R^n$ and $V$, respectively, such that $(F,t)$ is a 
biregular map of $A$ onto $B$. The image of $\R^n \setminus A$ 
under $F$ and of $V \setminus B$ under $p_n$ are both \sa 
subsets of $\R^n$ of maximum dimension at most $n-1$. 
The Zariski closure of their union is therefore an algebraic set of  maximum dimension at most $n-1$. 
Let $U$ be its complement. Then $U$ is nonempty, 
Zariski open, 
$F^{-1}(U) \subseteq A$, and $p_n^{-1}(U) \subseteq B$. 
$U$ will be modified in the course of this proof. First, 
further restrict $U$ by requiring that both the leading coefficient 
and discriminant of $R(T)$ not have any zeros on $U$. 
That takes care of (1) and (2). 
If $y \in U$ and $z$ is a real root of $R(y)(T)$, then $(y,z)$ 
is a point of $V$ that lies in $B$. So $b=(y,z)$ is the image of 
a point $x \in A$ under $(F,t)$. Since $(F,t)$ is regular on $A$, 
this implies that $t$ is defined at $x$ and $t(x)=z$. 
That shows that the number of inverse images $x$ is at least 
as large as the number of real roots $z$. 
As $F^{-1}(U)\subseteq A$, $t$ is defined at any inverse image 
$x \in \R^n$ and $z=t(x)$ is a surjective map of inverse 
images to real roots. 
The final step in the proof is to further restrict $U$ so as to 
ensure that the correspondence is bijective. The $n$ coordinate 
polynomials $x_i \in \R(X)=\R(Y)[t]$ are each (uniquely) 
polynomials of degree less than $d$ in $t$ with coefficients 
in $\R(Y)$. Restrict $U$ to contain only points at which all 
the coefficients of those polynomials are defined. Then for 
$y \in U$, not only are $y=F(x)$ and $z=t(x)$ functions 
of $x$, but also $x$ is  a function of $y$ and $z$. 
\end{proof}

This leads directly to the following theorem. 
Note that the hypotheses on $F$ imply not only that 
the function field extension is algebraic of finite degree, but 
also that $F$ is bianalytic at some point, ensuring that 
the image of $F$ contains an open set and is thus at least 
Zariski dense. The theorem is moderately practical in 
application, allowing one to deal with a single specific 
polynomial in one variable.

\begin{Thm}\label{Dense}
Let $F:\R^n \To \R^n$ be an everywhere defined real rational map. 
Assume that the \J determinant of $F$  is not identically zero on $\R^n$. 
Let $t$ be a primitive element for the associated function field extension, and $m$ its monic minimal polynomial. 
Then the following are equivalent:
\begin{enumerate}
\item $F(\R^n)$ is dense in $\R^n$,
\item $F(\R^n)$ contains a Zariski open subset of $\R^n$,
\item $m$ is defined and has at least one real root 
on a Zariski open subset of $\R^n$, 
\item $m$ has at least one real root everywhere it is defined. 
\end{enumerate}
\end{Thm}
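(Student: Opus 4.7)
The plan is to show the cyclic chain $(2) \Rightarrow (1) \Rightarrow (3) \Rightarrow (4) \Rightarrow (2)$. The first step is trivial: any nonempty Zariski open subset of $\R^n$ has complement contained in the zero set of a nonzero polynomial, and so is Euclidean dense. For the remainder I rely on Lemma \ref{roots}, which furnishes a Zariski open $U \subseteq \R^n$ on which the leading coefficient $\ell$ and the discriminant of $R(T)$ are nonzero, and on which the real roots of $R(y)(T)$ correspond bijectively to the preimages of $y$ under $F$. Since $R = \ell \cdot m$ and $\ell$ is nonvanishing on $U$, the polynomial $m(y)(T)$ is defined and has the same roots (real or complex) as $R(y)(T)$ for every $y \in U$.

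For $(4) \Rightarrow (2)$, condition (4) supplies a real root of $m(y)(T)$, hence of $R(y)(T)$, for every $y \in U$, so by Lemma \ref{roots} each such $y$ has a preimage under $F$; thus $U \subseteq F(\R^n)$, giving (2). For $(3) \Rightarrow (4)$, let $V$ be the Zariski open domain of definition of $m$ and $W \subseteq V$ a Zariski open set on which $m$ has at least one real root. The set $S = \{y \in V : m(y)(T) \text{ has at least one real root}\}$ is Euclidean closed in $V$: at any $y_0 \in V \setminus S$ the $d$ complex roots of $m(y_0)(T)$ all have imaginary parts bounded away from zero, and by standard continuity of roots as functions of the polynomial coefficients, the same holds in a Euclidean neighborhood of $y_0$. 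As $W$ is Zariski open in $\R^n$ it is Euclidean dense in $\R^n$ and hence in $V$, so $S$, being Euclidean closed in $V$ and containing $W$, must equal $V$, which is (4).

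For $(1) \Rightarrow (3)$, on $U$ the $d$ roots of $m(y)(T)$ are simple, so the implicit function theorem presents each root locally as a real analytic function of $y$; a real root of $m(y)(T)$ cannot become non-real without merging with its complex conjugate, which is forbidden by simplicity, and so the number of real roots is locally constant on $U$, hence constant on each Euclidean connected component $C$. Each such $C$ is a nonempty Euclidean open subset of $\R^n$, so density of $F(\R^n)$ provides some $y \in C \cap F(\R^n)$; by Lemma \ref{roots}, $R(y)(T)$ and hence $m(y)(T)$ has a real root, and by constancy on $C$ so does $m(y')(T)$ for every $y' \in C$. This holds on every component, hence on all of $U$, establishing (3) with $W = U$.

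The main obstacle is the careful treatment of continuity of roots: one needs the ``all roots non-real'' condition to be open on all of $V$ for $(3) \Rightarrow (4)$, and on the smaller locus $U$ where the discriminant is nonzero one needs individual real roots to vary continuously and stay real for $(1) \Rightarrow (3)$. The first is standard continuity of roots as functions of polynomial coefficients; the second uses the implicit function theorem at simple roots together with a complex-conjugation argument ruling out real roots becoming non-real. Everything else amounts to bookkeeping with Zariski open subsets of $\R^n$ and their Euclidean density.
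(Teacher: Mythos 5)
Your proof is correct, and it uses the same central ingredients as the paper (Lemma \ref{roots}, together with continuity of roots in the coefficients), but you run the cycle in the opposite direction: you prove $(2)\Rightarrow(1)\Rightarrow(3)\Rightarrow(4)\Rightarrow(2)$, while the paper proves $(4)\Rightarrow(3)\Rightarrow(2)\Rightarrow(1)\Rightarrow(4)$. The paper's ordering is chosen so that two of the four links are essentially free: $(4)\Rightarrow(3)$ is just restriction of a universally quantified statement to a subset, and $(2)\Rightarrow(1)$ is Zariski-open-implies-dense; the only real work is $(3)\Rightarrow(2)$ via the lemma, and $(1)\Rightarrow(4)$ via a short contrapositive --- if some $y_0$ in the domain of $m$ has no real root, continuity of roots gives an open neighborhood with no real roots, which meets $U$, and by the lemma that intersection avoids the image. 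Your reversed cycle makes you earn both $(3)\Rightarrow(4)$ and $(1)\Rightarrow(3)$. Your $(3)\Rightarrow(4)$ argument (the set where $m$ has a real root is Euclidean-closed in the domain of definition, and contains a Zariski-open hence dense set) is clean and correct. Your $(1)\Rightarrow(3)$ argument invokes the implicit function theorem and a complex-conjugation argument to prove local constancy of the number of real roots on $U$; this is correct but heavier than the paper's contrapositive, which only needs that ``no real roots'' is an open condition. Nothing is missing, but if you reorder the cycle as the paper does you can drop the implicit-function-theorem step entirely and replace the local-constancy machinery with a two-line contradiction.
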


\begin{proof}
The polynomial $R(T)\in \R(Y)[T]$ of the preceding lemma, 
divided by its leading coefficient, is the monic minimal polynomial of $t$.
The latter is defined exactly at the points $y \in \R^n$ at which 
$R(y)(T)$ has full degree, because there is no common divisor 
of all the  coefficients of $R(T)$.  
And at those points both polynomials have the same roots. 
Since $m$ is defined everywhere except at the zeros of the 
leading coefficient of $R(T)$, (4) implies (3). 
If (3) holds for a Zariski open $V \subseteq \R^n$, 
the preceding  lemma implies that  the 
Zariski open subset $U \cap V$ is contained in the image of $F$, 
proving (2). Obviously, (2) implies (1). The final step is to 
show that (1) implies (4). Assume (4) is false. Take $y_0 \in \R^n$ 
at which $m$ is defined, but none of the roots are real. 
As long as the degree remains constant, the roots of a 
polynomial, here $R(y_0)(T)$,
depend continuously on its coefficients. 
 So there is an open neighborhood of $y_0$ on which $m$ is 
defined and has only complex roots. 
That open set intersects $U$ in a nonempty open set. By the 
lemma, that intersection lies outside the image of $F$, 
contradicting (1). 
\end{proof}

\begin{Cor}\label{odd:dense}
For $F$ as above, if the field extension is of odd degree, 
then the image of $F$ is dense. 
\end{Cor}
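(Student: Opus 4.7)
The plan is to deduce this directly from Theorem \ref{Dense}, by verifying condition (4) in its statement, namely that the monic minimal polynomial $m$ of the primitive element $t$ has at least one real root wherever it is defined on $\R^n$.

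First I would note that $m \in \R(Y)[T]$ has $T$-degree equal to $d$, the degree of the \fe $\R(F) \subseteq \R(X)$, which is odd by hypothesis. At any point $y \in \R^n$ where $m$ is defined, all coefficients of $m$ (viewed as a polynomial in $T$) are defined and the leading coefficient is $1$, so the specialization $m(y)(T)$ is a polynomial in $\R[T]$ of degree exactly $d$.

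The key (trivial) observation is then that every real polynomial of odd degree has at least one real root: as $T$ tends to $+\infty$ and $-\infty$, a monic polynomial of odd degree takes arbitrarily large positive and arbitrarily large negative values, and the intermediate value theorem supplies a real zero. Hence condition (4) of Theorem \ref{Dense} holds, and by the equivalence in that theorem, $F(\R^n)$ is dense in $\R^n$. There is no real obstacle here; the entire content of the corollary is that the odd-degree hypothesis is precisely what is needed to guarantee a real root of the monic minimal polynomial at every point of its domain of definition, which is exactly the algebraic condition the density criterion of Theorem \ref{Dense} was phrased to exploit.
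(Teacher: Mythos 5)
Your proposal is correct and is essentially the paper's own argument, just spelled out in more detail: the paper's proof is the one-line observation that a polynomial of odd degree has a real root, which is precisely your verification of condition (4) of Theorem \ref{Dense}. Nothing differs in substance.
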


\begin{proof}
A polynomial of odd degree has a real root. 
\end{proof}

\begin{Ex}
For $y=f(x)=x^2$ and $x$ chosen for $t$, $R(T)=T^2-y$, which is irreducible over 
$\R(Y)$, but factors over $\R(X)$ as $(T-x)(T+x)$. Note 
that its specializations $R(y)(T)$ for $y < 0$ do not factor 
over $\R$.
\end{Ex}

\subsection{An injectivity criterion}\label{conj4A}{\ \newline}

Start this section with the same notations and assumptions 
as in the preceding one. In particular, $j(F)$ may have 
zeros, but does not vanish identically on $\R^n$. 
However, $t$ will no longer be a completely 
arbitrary primitive element, but instead will be selected using 
the following well known results. 

\begin{Lemma}
Some real linear combination
$c_1x_1 + c_2x_2 + \cdots + c_nx_n, c_i \in \R$ 
of the coordinate polynomials is primitive. 
\end{Lemma}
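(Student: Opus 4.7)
The plan is to apply the classical primitive element theorem inductively, while carefully tracking that the coefficients produced can always be chosen in $\R$ and not merely in the larger ground field $\R(F)$. As noted earlier in section \ref{conj}, the extension $\R(F) \subseteq \R(X)$ is finite algebraic, and since the characteristic is zero it is automatically separable, so primitive elements exist. The content of the lemma is to refine this and exhibit one of the special form $c_1 x_1 + \cdots + c_n x_n$ with $c_i \in \R$.

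I would proceed by induction on $n$. The base case $n = 1$ is trivial with $c_1 = 1$. For $n \geq 2$, set $L = \R(F)(x_1, \ldots, x_{n-1})$, which is a finite separable extension of $\R(F)$ generated by $n-1$ elements. By the inductive hypothesis, pick $c_1, \ldots, c_{n-1} \in \R$ so that $\gamma = c_1 x_1 + \cdots + c_{n-1} x_{n-1}$ generates $L$ over $\R(F)$. Since $\R(X) = L(x_n) = \R(F)(\gamma, x_n)$, it then suffices to find $c_n \in \R$ for which $\gamma + c_n x_n$ is primitive for $\R(X)/\R(F)$.

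For this last step I would invoke the standard two-generator primitive element argument. In an algebraic closure of $\R(F)$, let $\gamma = \gamma_1, \ldots, \gamma_s$ and $x_n = \xi_1, \ldots, \xi_t$ denote the conjugates over $\R(F)$. Then $\gamma + c x_n$ fails to be primitive precisely when $\gamma_i + c \xi_j = \gamma_1 + c \xi_1$ for some pair $(i,j) \neq (1,1)$ with $\xi_j \neq \xi_1$; each such equation pins $c$ to a single value, so the set of bad $c$ is finite.

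The main (and really only nontrivial) point is ensuring that the coefficients can be chosen in the prescribed subfield $\R$ rather than anywhere in the ground field $\R(F)$. This is not actually an obstacle: at every inductive step the set of forbidden coefficients is finite, while $\R$ is an infinite subfield of $\R(F)$, so a suitable $c_n \in \R$ always exists. Iterating this observation through the induction delivers the desired $\R$-linear combination.
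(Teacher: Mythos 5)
Your proof is correct. The difference from the paper's argument lies in which of the two classical proofs of the primitive element theorem you invoke at the inductive step. You use the conjugate-counting argument: write out the conjugates of $\gamma$ and of $x_n$ over $\R(F)$ in an algebraic closure, observe that the ``bad'' values of $c$ (those for which some distinct pair of conjugates could collide) form a finite set, and then use that $\R$ is infinite to choose a good $c_n \in \R$. The paper instead uses the Steinitz/Artin characterization: a finite separable extension has only finitely many intermediate fields, so by pigeonhole (again using that $\R$ is infinite) two distinct $c, c' \in \R$ would place $x_1 + cx_2$ and $x_1 + c'x_2$ in the same proper intermediate field, which would then contain both $x_1$ and $x_2$, a contradiction. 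Both are reductions to the two-generator case followed by iteration, and both exploit the infinitude of $\R$ in exactly the same spot to keep the coefficients real rather than merely in $\R(F)$; your version is more computational and self-contained, while the paper's is shorter once the ``finitely many intermediate fields'' fact is granted. One small imprecision in your write-up: ``$\gamma + cx_n$ fails to be primitive \emph{precisely when} $\gamma_i + c\xi_j = \gamma_1 + c\xi_1$ for some $(i,j) \neq (1,1)$'' overstates the equivalence; what the standard argument gives is that \emph{if no such coincidence occurs} then $\gamma + cx_n$ is primitive, which is the direction you actually need, and the set of $c$ producing a coincidence is clearly finite.
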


\begin{proof}
Choose $c_1=1$. 
For some $c_2 \in \R$, the subfield of 
$\R(X)$ generated over $\R(F)$ by the single element 
$x_1 + c_2x_2$ is the same as the subfield generated by 
the two elements $x_1$ and $x_2$. 
For otherwise, since there are only finitely many intermediate 
fields in characteristic zero, there would be combinations 
$x_1+ c_2x_2$ with different values of $c_2$ that lie in 
the same proper subfield, which must therefore contain 
$x_2$ and hence also $x_1$, a contradiction. 
The ultimate result follows by adding one summand at a time, 
to obtain a linear combination that generates 
$\R(X)=\R(\vars)$, and so is primitive by definition. 
\end{proof}

Remark. Almost all linear combinations are primitive. 
More specifically, the coefficients 
$(c_1,c_2,\ldots,c_n) \in \R^n$ of those that are not 
primitive belong to a finite union of proper vector subspaces 
of $\R^n$, one subspace for each proper subfield of 
$\R(X)$ containing $\R(F)$. 

Recall the identification of $y_i$ with $f_i$, and hence of 
$\R[Y]$ and $\R[F]$.

\begin{Lemma}
If $t$ is a primitive element, there is a multiple 
$at$ of it, for a polynomial $a \in \R[Y]$, such that 
$at$ is primitive and its  monic minimal polynomial 
has coefficients in $\R[Y]$. 
\end{Lemma}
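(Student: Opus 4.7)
The plan is to clear denominators in the minimal polynomial of $t$, absorbing them into $t$ itself via multiplication by a suitable $a\in\R[Y]$.

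First, let $m_t(T)=T^d+c_{d-1}T^{d-1}+\cdots+c_1T+c_0$ be the monic minimal polynomial of $t$ over $\R(Y)$, where each $c_i\in\R(Y)$. Write each coefficient as a reduced fraction $c_i=p_i/q_i$ with $p_i,q_i\in\R[Y]$ and $q_i\ne 0$, and let $a\in\R[Y]$ be any nonzero common multiple of $q_0,q_1,\ldots,q_{d-1}$ (for instance, their least common multiple, or even just the product $q_0q_1\cdots q_{d-1}$).

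Next, observe that $a\in\R[Y]\subset\R(F)$, so the $\R(F)$-subalgebra generated by $at$ coincides with that generated by $t$; in particular $\R(F)(at)=\R(F)(t)=\R(X)$, so $at$ is again a primitive element. Since $at$ generates the same extension of degree $d$, it has a monic minimal polynomial of the same degree $d$ over $\R(Y)$. To exhibit that polynomial, substitute $t=(at)/a$ into $m_t(t)=0$ and multiply through by $a^d$:
\begin{equation*}
(at)^d + a c_{d-1}(at)^{d-1} + a^2 c_{d-2}(at)^{d-2} + \cdots + a^{d-1} c_1 (at) + a^d c_0 = 0.
\end{equation*}
For each $i\in\{1,\ldots,d\}$ the coefficient of $(at)^{d-i}$ is $a^i c_{d-i}=a^i p_{d-i}/q_{d-i}$. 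Since $q_{d-i}\mid a$ by construction and $i\ge 1$, we have $q_{d-i}\mid a^i$, so each such coefficient lies in $\R[Y]$. Thus the above is a monic polynomial in $at$ of degree $d$ with coefficients in $\R[Y]$, and by degree considerations it must be the monic minimal polynomial of $at$ over $\R(Y)$.

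There is no genuine obstacle here; the only thing to be careful about is that the passage from $t$ to $at$ must not change the generated field (handled by $a\in\R(F)$) and must not drop the degree (automatic once one knows it is still primitive). The construction is constructive: given any explicit $t$ one can read off the required $a$ from the denominators of the coefficients of $m_t$.
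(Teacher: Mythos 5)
Your proof is correct and is essentially the same as the paper's: both clear denominators by multiplying $t$ by a suitable $a\in\R[Y]$ (the paper takes $a$ to be the leading coefficient of the denominator-cleared polynomial $R(T)$, which is a least common multiple of your $q_i$, and observes $(at)^d = a^{d-1}(at^d)$; your substitution $t=(at)/a$ followed by multiplication by $a^d$ is the same manipulation). The only difference is that you allow any common multiple of the denominators rather than specifically the lcm, which is a harmless generalization.
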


\begin{proof}
Let $R(T) \in \R[Y][T]$ be the minimal degree polynomial
with root $t$ of the previous section, 
and $a \in \R[Y]$ the 
coefficient of its leading term as a polynomial of degree $d$ in 
$T$. From $R(t)=0$, it follows that $at^d$ is a sum of 
terms of lower degree in $t$. But then $(at)^d=a^{d-1}(at^d)$ can 
be written as a sum of terms of lower degree in $(at)$. 
That yields a degree $d$ monic polynomial 
$S(T) \in \R[Y][T]$, such that $S(at)=0$. 
Since $0 \ne a \in \R(Y)$, the field generated by 
$at$ over $\R(Y)$ is also $\R(X)$. So $S$ is the monic 
minimal polynomial of $at$. 
\end{proof}

\begin{Thm}\label{better}
Let $F:\R^n \To \R^n$ be an everywhere defined real rational 
map with a \J determinant that is not identically zero
on $\R^n$. 
Then there exists a primitive element $t$ for the associated 
function field extension $\R(F) \subseteq \R(X)$, such that
\begin{enumerate}
\item$t$ is defined everywhere on $\R^n$, and 
\item $t$ is integral over $\R[F]$. 
\end{enumerate}
\end{Thm}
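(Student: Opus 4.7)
The plan is to combine the two preceding lemmas in sequence. First I would invoke the first lemma to produce a primitive element $t_0 = c_1 x_1 + \cdots + c_n x_n$ of $\R(F) \subseteq \R(X)$ that is a $\R$-linear combination of the coordinate functions $x_i$. Being a polynomial in $\vars$, this $t_0$ is automatically everywhere defined on $\R^n$, which secures condition (1) at the outset. However, $t_0$ need not yet be integral over $\R[F]$.

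Next I would apply the second lemma to $t_0$. That lemma produces an element $a \in \R[Y] = \R[F]$ (the leading coefficient of the minimal-degree polynomial with root $t_0$) such that $t := a \cdot t_0$ remains a primitive element of $\R(F) \subseteq \R(X)$ and satisfies a monic polynomial equation with coefficients in $\R[F]$. The latter is precisely the assertion that $t$ is integral over $\R[F]$, giving condition (2).

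It remains to verify that this new $t$ is still defined everywhere on $\R^n$. Since $t_0$ is a polynomial in $\vars$, it is defined at every point of $\R^n$. The factor $a$ is a polynomial in the components $f_1,\ldots,f_n$ of $F$, and by hypothesis each $f_i$ is an everywhere defined real rational function on $\R^n$. A polynomial expression in everywhere defined rational functions is again everywhere defined, so $a(F(x))$ is defined for every $x \in \R^n$. Hence the product $t = a \cdot t_0$ is defined everywhere, completing the proof.

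I do not anticipate any real obstacle here: the substance of the theorem is entirely contained in the two lemmas that immediately precede it, and my proof consists essentially of chaining them together. The one point that warrants a sentence of explanation is the observation that multiplying the polynomial primitive element $t_0$ by an element of $\R[F]$ preserves the "defined everywhere on $\R^n$" property, which follows from the standing hypothesis that $F$ itself is everywhere defined.
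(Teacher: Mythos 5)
Your proof is correct and takes essentially the same approach as the paper: apply the two preceding lemmas in succession, starting from a linear (hence polynomial, hence everywhere-defined) primitive element, then multiply by $a \in \R[F]$ to achieve integrality, and observe that $at_0$ remains everywhere defined because $a(F(x))$ is a polynomial in the everywhere-defined components of $F$. The only difference is that you spell out the last verification in more detail than the paper does.
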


\begin{proof}
Use the two lemmas in succession. 
After the first, one has a polynomial primitive element.
If $t$ is a polynomial  ($t \in \R[X]$), the same is 
not necessarily true of $at$, since $a$ is a polynomial in 
the components of $F$, which are not assumed to be polynomial. 
But $at$ will be an everywhere 
defined real rational function. 
Rename it $t$. 
\end{proof}

Remark.  From the proof, $t$ can be chosen more specifically 
as the product of a linear form in the $x_i$ 
and a polynomial in the $f_i$. 

Assume in the following that $t$ satisfies (1) and (2). 
$R(T)$ will denote some 
nonzero multiple, by a real constant, of the monic 
minimal polynomial of $t$. 
Condition (1) ensures that for all $x \in \R^n$ there is 
a corresponding real root $t(x)$ of $R(y)(T)$ at $y=F(x)$. 
Condition (2) implies that for all $y \in \R^n$ the roots of $R(y)(T)$ are 
continuous functions of $y$. 
One way of stating continuity of roots more  
precisely is that there exist $d$ continuous 
functions $r_i:\R^n \To \C$,
such that $R(y)(r_i(y))=0$  for any $y \in \R^n$, 
and all roots are obtained in this way. 
A canonical way to define the $r_i$ is to totally order 
$\C$ using a lexicographic ordering of the real and complex 
parts, and then let $r_i(y)$ be the $i^{th}$ element of the 
set of all $d$ roots of $R(y)(T)$, where the roots are 
repeated according to multiplicity and arranged in order 
from smallest to largest \cite{RootsDepend}. 
The set of all distinct roots for all $y \in \R^n$ is then the 
closed subset of $\R^n \times \C$ that is the 
union of the graphs of the continuous functions $r_i$; 
the individual function graphs will also be called sheets. 
Each sheet is closed in $\R^n \times \C$ and its projection to 
$\R^n$ is a homeomorphism. 
The projection of the set of roots onto $\R^n$ is 
clearly proper. 
A consequence is that roots over points sufficiently close to
a given point $y$ are each near a uniquely determined 
distinct root over $y$. 
It is obvious that repeated roots are those that lie on 
more than one sheet, and it follows from the above 
consequence of properness
that the multiplicity of a repeated root is exactly the number 
of sheets on which it lies. 

\begin{Ex}
Let $R(y)(T)=T^2-(y^2+y^4)T+y^6=(T-y^2)(T-y^4)$. 
Since all the roots are real, they are ordered in the usual 
way if the above method is used, but the slightly unnatural 
sheets $r_1=\min(y^2,y^4)$ and $r_2=\max(y^2,y^4)$ are 
produced, rather than the natural algebraic sheets 
$y^2$ and $Y^4$. 
\end{Ex}

Denote by $\#r$ ($\#c$) the number of real (respectively, 
complex) roots counted with multiplicities. 
Here, a complex root is understood to be a root with a 
nonzero complex part. 
For any integer $i$, the condition $\#c \ge i$ 
(equivalently, $\#c > i-1$) is open, 
meaning that the set of points $y \in \R^n$ at which it holds 
is an open set, simply because $\R$ is a closed subset of 
$\C$. And $\#r \ge i$ (equivalently, $\#r > i-1$) is closed, 
as it is the negation of $\#c > d-i$. 
Also, any real root with arbitrarily close complex roots is 
repeated, since complex conjugate roots lie on different 
sheets.

One can generalize Theorem \ref{Dense} as follows. 
Let $E_1$ be the closure of the image of $F$. 
Given that $j(F)$ does not vanish 
identically, the image of $F$ contains some interior points, 
and hence so does $E_1$. 
Let $O$ be the complement of $E_1$ in $\R^n$, and 
$E_2$ the closure of $O$. $E_2$ is empty if, and only if, 
$F$ has dense image, and otherwise contains some 
interior points. 
If $F$ does not have dense image, then $E_1$, $E_2$, and 
$E_1 \cap E_2$ are all closed and nonempty, since $\R^n$ is 
connected.

\begin{Prop}\label{nonDense}
If $y \in E_1$, then $R(y)(T)$ has at least one real root. 
If $y \in E_2$, then every real root of $R(y)(T)$ has 
multiplicity greater than one. 
\end{Prop}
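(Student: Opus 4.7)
The plan is to prove the two assertions by separate arguments, both exploiting the continuous-sheet picture established just before the proposition.

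For the first assertion, if $y \in E_1 = \overline{F(\R^n)}$, I would pick $x_k \in \R^n$ with $F(x_k) \to y$. By Theorem \ref{better}, $t$ is everywhere defined and integral over $\R[F]$, so one may take $R(T)$ to be monic with coefficients in $\R[Y]$. Then $r_k := t(x_k)$ is a real root of $R(F(x_k))(T)$, and monicity bounds the roots locally uniformly in $y$, so $\{r_k\}$ is eventually bounded. A convergent subsequence $r_k \to r \in \R$ then gives, by continuity of the coefficients of $R$, a real root $r$ of $R(y)(T)$. This is essentially a restatement of the properness of the projection of the root set to $\R^n$, so no real difficulty is expected here.

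For the second assertion, the idea is to approach $y \in E_2 = \overline{O}$ through points of $O$, where no real roots exist, and then exploit conjugation-invariance of the sheets clustering at a given real root. Let $r$ be a real root of $R(y)(T)$ of multiplicity $m$, and let $r_{i_1}, \ldots, r_{i_m}$ be the sheets through $(y,r)$. Since $O$ is open and the Zariski-open set $U$ of Lemma \ref{roots} intersects every nonempty open set, I can find a sequence $y_k \in O \cap U$ with $y_k \to y$. By Lemma \ref{roots}, $R(y_k)(T)$ has no real roots for such $y_k$, because $y_k$ has no $F$-preimage.

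So for each $j$, $r_{i_j}(y_k)$ is non-real, and its complex conjugate is also a root of $R(y_k)(T)$ tending to $\overline r = r$; by the sheet picture (every root of $R(y')(T)$ sufficiently close to $r$, for $y'$ sufficiently close to $y$, lies on one of the $m$ sheets through $(y,r)$), this conjugate must coincide with some $r_{i_\ell}(y_k)$ for $k$ large. Thus the family of sheets through $(y,r)$ is closed under conjugation locally. If $m=1$, the single sheet would be self-conjugate near $y$ and hence real-valued there, contradicting the absence of real roots at $y_k \in O \cap U$; therefore $m \ge 2$.

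The main obstacle I anticipate is justifying rigorously the local absorption claim, namely that all roots of $R(y')(T)$ near $r$ for $y'$ near $y$ lie on the sheets through $(y,r)$. This is the essential local content of the proper-projection-plus-continuity setup summarized just before the proposition, and any fine point in the argument will come down to that. Once it is in hand, conjugation invariance together with the zero-real-root condition on $O \cap U$ forces $m \ge 2$ without further work.
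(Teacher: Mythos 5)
Your proof is correct and follows essentially the same route as the paper's, which simply invokes two facts established in the surrounding text (that $\#r \ge 1$ is a closed condition, and that a real root with arbitrarily close complex roots is repeated since conjugate roots lie on distinct sheets) rather than re-deriving them via compactness of the root projection and conjugation-closure of sheets. The local-absorption point you flag as the main obstacle is precisely the consequence of properness the paper states explicitly in the paragraph preceding the proposition, so it is already in hand and needs no further work.
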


\begin{proof}
If $y=F(x)$, then $t(x)$ is a real root over $y$. 
Since $\#r \ge 1$ is a closed condition, it holds on 
$E_1$. By Lemma \ref{roots} of section \ref{conj4}, 
there is a Zariski open subset $U \cap O$ of $O$ 
consisting of points over which $R(T)$ has only 
complex roots. So every real root over a point of 
$E_2$ has arbitrarily close complex roots. 
\end{proof}

The notion of sheets can be used to prove a significant 
criterion for injectivity. Some prerequisite terminology follows. 
A map of topological spaces is said to be locally injective at a 
point if it is injective on some neighborhood of that point. 
By the celebrated Invariance of Domain Theorem of Brouwer, 
a continuous injective map of an open subset of $\R^n$ to 
$\R^n$ has an image that is an open subset of $\R^n$, and is 
a homeomorphism onto that image. Since $F$ is 
continuous, then if it is locally injective at a point, it is also an 
open map at that point. 

\begin{Lemma}\label{twos} 
Suppose $a \ne b$ are points in $\R^n$, 
and $F$ is locally injective at both $a$ and $b$.  
If $F(a)=F(b)=y$ and $t(a)=t(b)=r$, 
then $r$ is a repeated root of $R(y)(T)$.
\end{Lemma}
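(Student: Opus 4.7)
The plan is to argue by contradiction: assume $r$ is a simple root of $R(y)(T)$ and derive a violation of the generic injectivity of $(F,t)$ supplied by Lemma \ref{roots}.

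First, by local injectivity and the Invariance of Domain theorem, choose disjoint Euclidean open neighborhoods $N_a\ni a$ and $N_b\ni b$ such that the restrictions $F|_{N_a}$ and $F|_{N_b}$ are homeomorphisms onto open sets $V_a=F(N_a)$ and $V_b=F(N_b)$, with continuous local inverses $\vp_a\colon V_a\to N_a$ and $\vp_b\colon V_b\to N_b$. Both $V_a$ and $V_b$ contain $y$, so $W=V_a\cap V_b$ is a non-empty Euclidean open neighborhood of $y$, and hence is Zariski dense in $\R^n$ (a non-empty Euclidean open subset of $\R^n$ cannot lie on a proper algebraic set, since a polynomial vanishing on a Euclidean open set is identically zero). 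Consequently $W\cap U$ is non-empty, where $U$ is the Zariski open set produced by Lemma \ref{roots}.

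Next, use the sheet description of the roots. Since $r$ is assumed to be a simple root of $R(y)(T)$, exactly one sheet $r_i$ satisfies $r_i(y)=r$, and the remaining sheets take values bounded away from $r$ at $y$. By continuity of all the sheets, there exist a Euclidean neighborhood $W'\subseteq W$ of $y$ and an $\varepsilon>0$ such that for $y'\in W'$ the only root of $R(y')(T)$ within $\varepsilon$ of $r$ is $r_i(y')$. Since $t$ is defined and continuous everywhere (Theorem \ref{better}) and $a'=\vp_a(y')\to a$, $b'=\vp_b(y')\to b$ as $y'\to y$, we also have $t(a')\to r$ and $t(b')\to r$. Shrinking $W'$ if necessary, both $t(a')$ and $t(b')$ are within $\varepsilon$ of $r$, hence $t(a')=r_i(y')=t(b')$ for every $y'\in W'$.

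Finally, pick any $y'\in W'\cap U$, which is non-empty by the density argument above. Then $a'=\vp_a(y')\in N_a$ and $b'=\vp_b(y')\in N_b$ are distinct preimages of $y'$ under $F$ (the neighborhoods being disjoint), yet $t(a')=t(b')$. This directly contradicts condition (4) of Lemma \ref{roots}, which asserts that distinct preimages of a point of $U$ receive distinct values of $t$. Hence the assumption that $r$ is simple is untenable, and $r$ must be a repeated root of $R(y)(T)$.

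The only mildly delicate step is keeping the sheets under control; the issue is that the canonical lexicographic sheets of Theorem \ref{better} are merely continuous, not algebraic, so one cannot quote algebraic continuity of roots. The saving fact is that simplicity of $r$ at $y$ isolates exactly one sheet through $r$, and that together with the continuity of each individual $r_i$ is all that is needed. Everything else is an application of already-established ingredients: Invariance of Domain for the local inverses, Zariski density of Euclidean open sets, Theorem \ref{better} for the well-behaved primitive element $t$, and Lemma \ref{roots} for the final contradiction.
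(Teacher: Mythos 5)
Your proof is correct and follows essentially the same route as the paper's: assume $r$ is simple so that only one sheet passes through $(y,r)$, use local injectivity (via Invariance of Domain) to obtain disjoint preimage neighborhoods with local inverses, find a generic $y'$ near $y$ using Zariski density of Euclidean open sets, and derive a contradiction with part (4) of Lemma \ref{roots}. The only stylistic difference is that the paper packages the ``only one sheet near $(y,r)$'' fact as an open subset $O$ of the root variety and pulls it back under $(F,t)$, while you unwind the same idea into an explicit $\varepsilon$-neighborhood argument with the continuous sheets $r_i$; the underlying mechanism is identical.
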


\begin{proof}
Suppose, to the contrary, that $r$ is a simple root. 
Then there is an open subset $O$ of the set of roots (or of $\R^n \times \C$, for that matter) that contains $(y,r)$ and  no points on any other sheet than 
the sheet on which $(y,r)$ lies. 
The inverse image of $O$ under the map $(F(x),t(x))$ is 
an open subset of $\R^n$ that contains both $a$ and $b$. 
By the local injectivity hypothesis, it contains two disjoint 
open sets $U_a$ and $U_b$, containing
 $a$ and $b$, respectively, 
such that $F$ is injective, hence open,  on each of 
$U_a$ and $U_b$. 
$F(U_a) \cap F(U_b)$ is therefore an open neighborhood of 
$y$. By Lemma \ref{roots}, it contains a point $y'$, such 
that real roots over $y'$ correspond bijectively to inverse 
images of $y'$ under $F$. 
Take an inverse image of $y'$ in $U_a$ and one, 
necessarily different, in $U_b$. 
The corresponding roots are then distinct. 
That contradicts the fact that their images under 
$(F(x),t(x))$ both lie in $O$, and hence on a 
single sheet. 
\end{proof}

Remark. Local injectivity is used in the proof only to deduce 
that $F$ is open at $a$ and $b$. 
Still, it seems the appropriate hypothesis in attempts to 
prove injectivity on a larger scale.

Note that if $A \subseteq \R^n$, then, 
unless $A$ is open, the 
assertion that $F$ is locally injective at every point $a \in A$ is 
not a statement about the values of $F$ on $A$, but rather 
about its values on open neighborhoods of the points of 
$A$ in $\R^n$. 
The terminology for roots and sheets will be slightly simplified. 
At a point $x \in \R^n$, $t$ will be said to determine a simple  
(repeated) root if $(F(x),t(x))$ lies on only one 
(respectively, more than one) sheet, without any explicit 
reference to the polynomial $R(F(x)(T)$ of which $t(x)$ is 
a root.

\begin{Thm}\label{InjCrit} (Injectivity Criterion) 
Let $F:\R^n \To \R^n$ be an everywhere defined real rational 
map with a \J determinant that is not identically zero
on $R^n$. 
Let $t$ be an everywhere defined  primitive element for the associated function field extension $\R(F) \subseteq \R(X)$, such that $t$ is integral over $\R[F]$. 
Suppose that $A \subseteq \R^n$ is connected, that $F$ is 
locally injective on (some open neighborhood of) $A$, and 
that $t$ determines only simple roots on $A$. Then 
$F$ is injective on $A$. 
\end{Thm}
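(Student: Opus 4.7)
The plan is to assume for contradiction that there exist distinct points $a,b \in A$ with $F(a)=F(b)$, and to derive a contradiction by invoking Lemma \ref{twos}. The engine driving the contradiction is a continuity/connectedness argument showing that the sheet containing $(F(x),t(x))$ is independent of $x \in A$, which then forces $t(a)=t(b)$.

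First, because $t$ is integral over $\R[F]$, I would take $R(T)$ to be the monic minimal polynomial of $t$, so that $R(y)(T)$ has $T$-degree exactly $d$ for every $y \in \R^n$. The $d$ continuous sheets $r_1,\ldots,r_d:\R^n \to \C$ are then defined on all of $\R^n$, and for each $x \in A$ the real number $t(x)$ is by hypothesis a simple root of $R(F(x))(T)$. This lets me define a function $i:A \to \{1,\ldots,d\}$ by declaring $i(x)$ to be the unique index with $t(x)=r_{i(x)}(F(x))$; uniqueness is precisely the simple-root assumption on $A$.

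The key step is to show that $i$ is locally constant on $A$. Given $x_0 \in A$ with $i_0=i(x_0)$, the simple root $t(x_0)=r_{i_0}(F(x_0))$ is separated from the remaining roots of $R(F(x_0))(T)$, so there is an open neighborhood $W$ of $F(x_0)$ and an open disc $D$ around $t(x_0)$ in $\C$ such that the part of the root variety lying in $W\times D$ is exactly the graph of $r_{i_0}$. By continuity of $F$ and $t$, for $x \in A$ sufficiently close to $x_0$ the point $(F(x),t(x))$ lies in $W\times D$ and therefore on the sheet $r_{i_0}$, so $i(x)=i_0$. The subtlety I expect to be the main obstacle is that the canonical (lexicographic) labeling of sheets can jump as unrelated roots collide or permute elsewhere; this is circumvented by working only in the local region $W\times D$ around the simple root of interest, so that the global labeling shenanigans are irrelevant.

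Since $A$ is connected and $i$ is locally constant, $i$ is identically some $i_0$ on $A$, and so $t(x)=r_{i_0}(F(x))$ for every $x \in A$. If now $a,b \in A$ with $F(a)=F(b)=y$, this yields $t(a)=r_{i_0}(y)=t(b)$. Because $F$ is by hypothesis locally injective at both $a$ and $b$, Lemma \ref{twos} applies and forces $t(a)=t(b)$ to be a repeated root of $R(y)(T)$. That contradicts the assumption that $t$ determines only simple roots on $A$, unless $a=b$. Hence $F$ is injective on $A$.
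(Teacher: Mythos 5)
Your proof is correct and takes essentially the same approach as the paper. The paper also shows that the image of $A$ under $(F(x),t(x))$ lies on a single sheet by using connectedness (phrased as: the preimages of the sheets are pairwise disjoint closed sets covering $A$, so only one is nonempty) and then invokes Lemma \ref{twos} exactly as you do; your ``locally constant index function $i$'' is an equivalent packaging of that same connectedness argument.
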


\begin{proof}
Let $s$ be the restriction to $A$ of the 
continuous map $(F(x),t(x))$ to the set of roots. 
The inverse images of the sheets are closed subsets of $A$. 
If two of them intersect at a point $a$, then $s(a)$ lies on 
more than one sheet, and so $t(a)$ is a repeated root. 
But that contradicts the hypotheses, so there can be no such 
point of intersection. Since $A$ is connected, only one of 
the disjoint closed subsets is nonempty. 
Hence $s(A)$ is contained in a single sheet. 
Now suppose that $a,b \in A$, with $a \ne b$, but 
$F(a)=F(b)=y$. The points $(y,t(a))$ and $(y,t(b))$ lie 
on the same sheet and have the same projection to $\R^n$, and 
so are the same point. That is, $t(a)=t(b)=r$. 
But then $r$ is a repeated root by Lemma \ref{twos}, 
a contradiction. 
So $F(a)=F(b)$ is not possible. 
\end{proof}

\subsection{Applications}\label{conj4B}{\ \newline}

This section concerns some applications of 
previous results, 
primarily the injectivity 
criterion of Theorem \ref{InjCrit}, 
in the RRJC context. 
That is, the notations and assumptions of the preceding section 
apply, but it is also now assumed that $j(F)$ vanishes nowhere.

Briefly, $F \in \R(X)^n$ and $t \in \R(X)$ are defined everywhere, 
$j(F)$ vanishes nowhere, $\R(F)(t)=\R(X)$, and $t$ is integral over 
$\R[F]$, with $R(T) \in \R[Y][T]$ a degree $d$ nonzero 
real constant multiple of the monic minimal polynomial 
of $t$ over $\R[Y]=\R[F]$.

\begin{Lemma}\label{simple}
If $t$ determines only simple roots on a connected set $A$, 
then $F$ is injective on $A$. 
\end{Lemma}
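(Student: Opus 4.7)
The plan is to reduce this lemma to a direct application of the Injectivity Criterion (Theorem \ref{InjCrit}). That theorem requires five hypotheses: (i) $F$ is an everywhere defined real rational map with $j(F)$ not identically zero on $\R^n$; (ii) $t$ is an everywhere defined primitive element for $\R(F) \subseteq \R(X)$, integral over $\R[F]$; (iii) $A$ is connected; (iv) $F$ is locally injective on some open neighborhood of $A$; (v) $t$ determines only simple roots on $A$. Hypotheses (i), (ii), (iii), and (v) are all either built into the standing assumptions of this section (as summarized in the paragraph preceding the lemma) or are part of the hypothesis of the lemma itself, so I would simply invoke those.

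The only hypothesis needing explicit verification is (iv). Here the key observation is the strengthened standing assumption of this section: $j(F)$ vanishes \emph{nowhere} on $\R^n$, not merely nonidentically. Combined with the fact that $F$ is rational and everywhere defined, the real analytic inverse function theorem applied at each point of $\R^n$ shows that $F$ is a local real bianalytic diffeomorphism at every point of $\R^n$, and in particular locally injective at every point. Thus the open set $\R^n$ itself is an open neighborhood of $A$ on which $F$ is locally injective, which verifies (iv).

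With all five hypotheses in hand, Theorem \ref{InjCrit} applies directly and yields the injectivity of $F$ on $A$.

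There is essentially no obstacle here: the lemma is the promised specialization of the general criterion to the RRJC setting, where the nowhere-vanishing Jacobian assumption upgrades the local injectivity hypothesis from a nontrivial condition to an automatic consequence. The content of the result is entirely in the previous theorem; this lemma just records the cleaner statement available under the standing RRJC hypotheses.
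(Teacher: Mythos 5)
Your proof is correct and matches the paper's own argument: both reduce the lemma to Theorem \ref{InjCrit}, with the nowhere-vanishing Jacobian supplying local injectivity (local bianalyticity) everywhere on $\R^n$, so that the only remaining hypothesis is the one given in the lemma statement.
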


\begin{proof}
Follows immediately from the injectivity criterion, 
since $F$ is locally bijective everywhere. 
\end{proof}

Let $R'(T) \in \R[Y][T]$ be the 
partial derivative of $R(T)$ with respect to $T$. 
A root $r$ of $R(y)(T)$ is a repeated root if, and only if, 
it is also a root of $R'(y)(T)$. Suppose that $t(x)=r$. 
Differentiate the equation $R(F(x))(t(x))=0$ 
with respect to $\vars$, obtaining 
\begin{equation}\label{nabla}
(\nabla_R\cdot M + R'(r)v = 0,
\end{equation}
where $\nabla_YR$ is the row $n$-vector 
of partials of $R(T)$ with respect to the $Y$ coordinates, 
$M$ is the \J matrix of $F$ at $x$, 
$\cdot$ indicates a vector matrix product,
and $v$ is the row $n$-vector of partials of $t$ evaluated 
at $x$. 
The components of $\nabla_YR$ belong to $\R[Y][T]$, 
and in equation \ref{nabla} they 
must, of course, be evaluated not only at 
$y=F(x)$, but also at $T=r=t(x)$. 
Write $\nabla_YR(x)$ for $\nabla_YR(F(x))(t(x))$. 
Then $\nabla_YR$ is an everywhere 
defined real rational vector field on the domain of $F$. 

The case $d=1$ is exceptional, since it represents the only 
situation in which $t$ can be constant. 
More on this later. For the moment, just assume that $t$ is 
not constant.

\begin{Lemma}\label{vf}
$\nabla_YR$ is not the zero vector field, and if 
$\nabla_YR(x) \ne 0$, then $t(x)$ is a simple root 
of $R(y)(T)$ at $y=F(x)$. 
\end{Lemma}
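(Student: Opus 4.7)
The plan is to handle the two assertions separately; the second (simplicity) follows almost immediately from the chain-rule identity, while the first (nontriviality) is the substantive step.

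For the second assertion, suppose $\nabla_YR(x) \ne 0$. If $r = t(x)$ were a repeated root of $R(y)(T)$ at $y = F(x)$, then $R'(r) = 0$, and equation (\ref{nabla}) would collapse to $\nabla_YR(x)\cdot M = 0$. But $M = J(F)(x)$ is invertible, since $j(F)$ vanishes nowhere on $\R^n$, so $\nabla_YR(x) = 0$, a contradiction. Hence $t(x)$ is a simple root.

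For the first assertion I argue by contradiction: assume $\nabla_YR$ is the zero vector field on $\R^n$. Each of its components, viewed as an everywhere defined real rational function on $\R^n$ that vanishes identically, is zero in $\R(X)$. The evaluation map $\R[Y,T] \To \R(X)$ sending $Y \mapsto F$, $T \mapsto t$ has kernel exactly $(R)$, because $R$ is irreducible in $\R[Y,T]$ (noted earlier) and the image $\R[F,t]$ and the quotient $\R[Y,T]/(R)$ have the same transcendence degree $n$ over $\R$. Hence $R \mid \partial R/\partial y_i$ in $\R[Y,T]$ for every $i$. A degree comparison in the variable $y_i$ (valid in characteristic zero, since $\deg_{y_i}(\partial R/\partial y_i) < \deg_{y_i}(R)$ whenever $R$ actually depends on $y_i$) then forces $\partial R/\partial y_i = 0$ for every $i$, so $R \in \R[T]$. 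Then $R(t) = 0$ in $\R(X)$ exhibits $t$ as algebraic over $\R$; but $\R[X]$ is integrally closed, so $\R$ is algebraically closed in the purely transcendental extension $\R(X)/\R$, forcing $t \in \R$. This contradicts the standing assumption that $t$ is nonconstant.

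The only delicate point—and the main obstacle to a clean write-up—is the passage from pointwise vanishing of $\nabla_YR$ on $\R^n$ to polynomial divisibility of each $\partial R/\partial y_i$ by $R$ in $\R[Y,T]$. This relies on the birationality of $(F,t) \colon \R^n \To V$ (for identifying the kernel of the evaluation map) together with the irreducibility of $R$ in $\R[Y,T]$, both already in hand. Everything else—the chain-rule collapse, the $y_i$-degree argument, and the final relative-algebraic-closure remark—is routine.
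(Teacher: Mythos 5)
Your argument for the second assertion coincides with the paper's: if $\nabla_YR(x) \ne 0$, then since $M = J(F)(x)$ is invertible, equation~\ref{nabla} forces $R'(r) \ne 0$, so $r = t(x)$ is simple. For the first assertion, however, you take a genuinely different route. The paper's proof is geometric and reuses Lemma~\ref{roots}: it locates a point $x \in F^{-1}(U)$ at which the roots of $R(T)$ over $F(x)$ are all simple and at which, in addition, the gradient $v$ of the nonconstant rational function $t$ is nonzero; plugging into equation~\ref{nabla}, both $R'(r)$ and $v$ are nonzero there, so $\nabla_YR(x) \cdot M \ne 0$ and hence $\nabla_YR(x) \ne 0$. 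Your argument is instead purely algebraic: identify the kernel of the evaluation $\R[Y,T] \To \R(X)$ as the principal ideal $(R)$ via irreducibility and transcendence degree, deduce $R \mid \partial R/\partial y_i$ for all $i$ from the pointwise vanishing of $\nabla_YR$, kill each partial by a degree comparison, conclude $R \in \R[T]$, and then contradict the nonconstancy of $t$ since the only elements of the purely transcendental field $\R(X)$ that are algebraic over $\R$ are the constants. Both proofs are correct; the paper's is shorter and leans on machinery already in place, while yours avoids choosing a generic point and gives an intrinsic algebraic reason why $\nabla_YR$ cannot vanish identically. The one cosmetic blemish is the phrase about $\R[X]$ being integrally closed, which is a somewhat roundabout way to reach the standard fact just quoted; the substance is right, but the wording could be tightened.
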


\begin{proof}
By construction, $R(T)$ has only simple roots 
over the Zariski open set $U$ of Lemma \ref{roots}. 
$F^{-1}(U)$ is a nonempty open set, and it must contain 
a point at which the gradient vector of $t(x)$ is nonzero, 
since $t(x)$ is a nonconstant real rational function, 
and so is not locally constant anywhere. 
Evaluating equation \ref{nabla} at that $x$, the scalar 
$R'(r)$ is nonzero, because $r=t(x)$ is a simple
root, and the gradient vector $v$ is also nonzero. But that is 
clearly impossible if $\nabla_YR(x)=0$, and so 
$\nabla_YR$ cannot be the zero vector field. 
For the second conclusion, since $M$ is nonsingular 
at any $x \in \R^n$, if $\nabla_YR(x) \ne 0$, then 
$R'(r)$ cannot be zero, and hence $r=t(x)$ is a 
simple root. 
\end{proof}

\begin{Prop}\label{1:1}
Suppose that $\nabla_YR$ has no zeros on a connected set $A$. 
Then $F$ is injective on $A$. 
\end{Prop}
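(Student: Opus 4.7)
The plan is to derive Proposition \ref{1:1} as an immediate consequence of the two preceding lemmas. Under the standing assumption that $t$ is nonconstant, Lemma \ref{vf} asserts that nonvanishing of $\nabla_Y R$ at a point $x$ forces $t(x)$ to be a simple root of $R(F(x))(T)$. Lemma \ref{simple} says that if $t$ determines only simple roots on a connected set $A$, then $F$ is injective on $A$. So the proof should simply chain these two implications.

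First I would remark that the hypothesis $\nabla_Y R(x) \ne 0$ for all $x \in A$ makes Lemma \ref{vf} applicable pointwise on $A$: for every $x \in A$, the scalar $R'(r)$ appearing in equation \ref{nabla} cannot vanish at $r = t(x)$, since otherwise the nonsingular Jacobian matrix $M$ would force $\nabla_Y R(x) = 0$, contrary to hypothesis. Hence $t(x)$ is a simple root of $R(F(x))(T)$ for every $x \in A$, which is exactly the statement that $t$ determines only simple roots on $A$ in the terminology introduced just before Theorem \ref{InjCrit}.

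Next I would invoke Lemma \ref{simple} directly: because $F$ is locally bianalytic everywhere (the Jacobian condition), the injectivity criterion of Theorem \ref{InjCrit} applies with $A$ connected, and yields that $F$ is injective on $A$.

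There is really no main obstacle, the work having been done in setting up Lemma \ref{vf} and Theorem \ref{InjCrit}. The only fussy point is the marginal case $d = 1$ (the birational case, where $t$ could be constant and $\nabla_Y R$ is then identically zero, so the hypothesis forces $A = \emptyset$ and the conclusion is vacuous); one sentence dispatches it, or one simply appeals to the standing convention in the paragraph before Lemma \ref{vf} that $t$ is nonconstant.
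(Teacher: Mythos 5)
Your proof is correct and takes essentially the same route as the paper: apply Lemma \ref{vf} pointwise on $A$ to conclude $t$ determines only simple roots there, then invoke Lemma \ref{simple}. The extra discussion of the $d=1$ marginal case is careful but not needed given the paper's standing assumption that $t$ is nonconstant.
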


\begin{proof}
$t$ determines only simple roots 
on $A$, by Lemma \ref{vf}. 
Now use Lemma \ref{simple}. 
\end{proof}

Since $\nabla_YR$ is an everywhere defined real rational 
vector field, its set of zeros is a real algebraic set. 
Points, at which all the components of a vevtor field 
vanish, are usually called singular points of the vector 
field. So the set of zeros of $\nabla_YR$ will be 
denoted by $S$. 
Note that $S$ is of codimension at least $1$ in $\R^n$, 
since $\nabla_YR$ is not the zero vector field.

\begin{Cor}\label{codimS}
If $S$ has codimension $2$ or more ( in particular, 
if $S$ is empty), them $F$ is globally injective.
\end{Cor}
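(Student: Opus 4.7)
The plan is to apply Proposition \ref{1:1} with $A=\R^n\setminus S$ and then use Lemma \ref{injective} to promote injectivity on that complement to injectivity on all of $\R^n$. Before doing so I would dispose of the degenerate case $d=1$: in that case $\R(F)=\R(X)$, the map $F$ is birational, and Theorem \ref{bicase} already yields global injectivity. So I may assume $d\ge 2$, which guarantees that $t$ is nonconstant and keeps us in the setting of Lemma \ref{vf}, where $\nabla_YR$ is known to be a nonzero everywhere defined rational vector field.

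Next I would verify that $S$ is a closed semi-algebraic (indeed, real algebraic) subset of $\R^n$. Each component of $\nabla_YR$ is an everywhere defined real rational function, so, writing those components as reduced fractions with nowhere vanishing denominators, $S$ is exhibited as the common zero locus of finitely many real polynomials. Under the hypothesis $\dim S \le n-2$, Jelonek's connectedness bound (\cite[Lemma 8.1]{geometry}, invoked in section \ref{conj3}) gives that $\R^n\setminus S$ is connected; the case $S=\emptyset$ is trivial, with $\R^n\setminus S=\R^n$.

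Proposition \ref{1:1} now applies on the connected set $A=\R^n\setminus S$, yielding that $F$ is injective there. Since $S$ is Zariski closed, its complement is Zariski open; and since $F$ is rational, everywhere defined, and locally diffeomorphic (the hypothesis that $j(F)$ vanishes nowhere), it is a continuous, open, semi-algebraic map. Lemma \ref{injective} therefore promotes injectivity on the Zariski open subset $\R^n\setminus S$ to injectivity on all of $\R^n$, which is the desired conclusion.

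There is no real obstacle: all three ingredients (Proposition \ref{1:1}, Jelonek's connectedness lemma, and Lemma \ref{injective}) are already in hand, and the hypothesis $\mathrm{codim}\,S \ge 2$ is exactly what triggers the connectedness step. The only subtlety worth flagging is the separate handling of the trivial-extension case $d=1$, since in that case $t$ could be chosen constant and Lemma \ref{vf} would not apply; but as noted this case is subsumed by Theorem \ref{bicase}.
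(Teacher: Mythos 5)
Your proof is correct and follows essentially the same route as the paper: codimension at least $2$ gives connectedness of $\R^n\setminus S$ (via Jelonek's lemma), Proposition \ref{1:1} gives injectivity there, and Lemma \ref{injective} promotes it to global injectivity. Your separate disposal of the $d=1$ case via Theorem \ref{bicase} is a sensible piece of bookkeeping that the paper also makes, just after the corollary rather than before it, since the preceding Lemma \ref{vf} and Proposition \ref{1:1} were stated under the standing assumption that $t$ is nonconstant.
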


\begin{proof}
By reason of the codimension assumption, 
$A=\R^n \setminus S$ is connected (see section \ref{conj3}), 
so $F$ is injective on $A$. Also, $A$ is Zariski open, 
so $F$ is injective on $\R^n$ by Lemma \ref{injective}. 
\end{proof}

Remark. Of course, $F$ is then invertible. This situation 
is similar to that in section \ref{conj3}, where the 
codimension of the asymptotic variety was considered.
Chances seem better here, since the naive expectation 
for the dimension of the singular points of a vector 
field is zero.

Perhaps more practically, one always has injectivity 
on each connected component of $\R^n \setminus S$.

Back to the case $d=1$. This is the birational case, so 
$F$ is globally injective. Since $\R[Y]$ is integrally closed in 
$\R(Y)=\R(X)$, $t$ must be a polynomial in $\R[Y]$. 
If that polynomial is a constant, then $\nabla_YR$ is 
identically zero. If not, then $S$ is the inverse image 
under $F$ of the set of zeros of 
the gradient vector field of $t \in \R[Y]$ on the 
codomain.

\begin{Ex}\label{Sex}
This example works out the details for the specific
Pinchuk map defined in  section \ref{specific}. 
$F=(P(x,y),Q(x,y))$ and the auxiliary polynomials $t,h,f,q,u$ 
have their meanings here as there. 
The selected primitive element is $h$, and section \ref{ext} 
shows that 
\begin{equation*}
R(h) = 
(197/4) h^6 + \cdots +(2PQ - 170P^3)h -P^2Q = 0.
\end{equation*}
for a polynomial $R(T) \in \R[P,Q][T]$, which is 
fully written out in the appendix (section \ref{sup}, 
equation \ref{fullR}). 
The companion vector field is 
$(\partial R/\partial P, \partial R/\partial Q)$,
evaluated at $T=h$. 

$R$ has only three terms involving $Q$ making 
it easy to 
compute 
$\partial R/\partial Q = -(T-P)^2$. 
The expression for $\partial R/\partial P$ is 
considerably more complicated, but on substituting 
$T$ for $P$, it simplifies to $ T^3(6T^2+14T+8)$,
which is conveniently independent of $Q$. 
At any singular point of the vector field, therefore, both 
$P=h$ and $ h^3(6h^2+14h+8)=0$, must hold. 
And, conversely, any such point 
belongs to $S$, the set of all singular points 
of the vector field.

$S$ can be determined more specifically from the 
equations 
$t=xy-1,h=t(xt+1),f=(xt+1)^2(t^2+y),P=f+h$ that 
define $P$. 
And $F(S)$ can be determined by evaluating $Q=q-u$,
 where $q=-t^2-6th(h+1)$ and $u=u(f,h)$ is defined by 
equation \ref{ueq} in section \ref{specific}. 
If $P=h$, then $f=(xt+)^2(t^2+y)=0$. 
The case $xt+1=x^2y-x+1=0$ is equivalent to 
$x \ne 0$ and $y=1/x-1/x^2$ and it is easy to show that
$P=h=0$ and $Q=-1/x^2$ on the two curves.
The case $t^2+y=x^2y^2-2xy+y+1=0$ is equivalent to
$y<0$ and $x=1/y \pm 1/\sqrt{-y}$, and on these two 
curves $P=h=-1$ and $Q=y-u(0,-1)=y-163/4$. 
Since both $0$ and $-1$ are roots of $h^3(6h^2+14h+8)$, 
$S$ is the union of these four curves. They are disjoint, 
since the curves in each pair are disjoint and the image 
under $F$ of each curve in the first case is the negative 
$Q$-axis, while in the second case it is the portion of 
the line $P=-1$ satisfying $-\infty <Q<-163/4$. 
All four branches are asymptotic to the $y$-axis at 
$y=-\infty$ and to the $x$-axis at either $x=-\infty$ 
(three times), or at $x=+\infty$ (case 1, $x>0$). 
Also, $y$ is bounded above on all branches, with a maximum 
value $y=1/4$ at $x=2$ (case 1).

$S^c=\R^2 \setminus S$ is the disjoint union 
of five unbounded connected open 
sets. Each region has a boundary consisting of one or two 
branches of $S$. 
By Proposition \ref{1:1}, $F$ is injective on each of 
those five regions. 
Recall that $B(F)=F^{-1}(A(F))$ consists of three 
curves, and its complement of four regions, with 
$F$ injective on each region. 
Although the closure of $F(S)$ intersects $A(F)$ (at 
$(-1,-163/4)$ and $(0,0)$),  $F(S) \cap A(F) = \emptyset$. 
So each of the 
curves composing $B(F)$ lies in $S^c$ and, since it 
is connected, in just one of its five component regions. 
In fact, the component curve of $B(F)$ on which 
$-1 <P<0$ lies in the region of $S^c$ on which $y$ is 
not bounded above (the 'top' region), whereas the other two 
curves lie in the adjacent region bounded by the two branches
of $S$ $x=1/y+1/\sqrt{-y}$ ($y<0$) and 
$y=1/x-1/x^2$ ($x<0$). 
 These assertions can be checked by adding a few more 
curves with known images to $B(F)$, so as to form 
connected configurations of curves, with images that do  
not intersect $F(S)$, and then determining 
the location of a single point in each configuration 
 relative to the branches of $S$. 
For details, see the appendix.

Dually, each branch of $S$ is contained in one of the four 
regions of $\R^2 \setminus B(F)$. 
Two of the regions each map diffeomorphically onto the 
connected component of $\R^2 \setminus A(F)$ containing 
the positive $P$-axis 
(see the figure in section \ref{specific}). Since 
their image region contains no point of $F(S)$, they 
cannot contain any branch of $S$.
Both the case 1, $x<0$ and the case 2, $x=1/y-1/\sqrt{-y}$ 
branches lie in the same region, because there is no 
component curve of $B(F)$ to separate them in the region of 
$S^c$ that they bound.. 
Finally, the other two branches of $S$ must lie in the 
remaining, fourth, region, because they have the same 
images as the two in the third region and $F$ is 
injective on each region. 
\end{Ex}

\subsection{The {G}alois case}\label{conj5}{\ \newline}

As before, let $F:\R^n \To \R^n$ be an everywhere defined real 
rational map with nowhere vanishing \J determinant, $A(F)$ 
the set of points at which $F$ is not proper, and 
$B(F) = F^{-1}(A(F))$. 
Let G be the group (under composition) of real birational maps 
$g:\R^n \To \R^n$, such that the corresponding automorphism 
$g*$ of $\R(X)$ belongs to the group $G*$ of 
automorphisms of $\R(X)/\R(F)$, that is, such that $g*$ 
preserves every element of $\R(F)$. 
$G$ and $G*$ are opposite groups; that is, abstractly the same 
except for a reversal of the order of the product operation. 
By construction, every $g \in G$ satisfies $F \circ g = F$. 

\begin{Lemma}\label{rigid}
Any $g \in G$ is completely determined by its value at any one 
point at which it is defined. 
\end{Lemma}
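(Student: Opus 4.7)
The plan is to use the local diffeomorphism property forced by the nowhere-vanishing Jacobian of $F$ to lift uniquely across fibers, combined with the identity principle for rational maps. Concretely, suppose $g_1, g_2 \in G$ are both defined at a point $x_0 \in \R^n$ and satisfy $g_1(x_0) = g_2(x_0) = y_0$. I want to conclude that $g_1 = g_2$ as real birational maps.

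First I would record two consequences of membership in $G$. Since every $g \in G$ satisfies $F \circ g = F$ as rational maps, the relation $F(g(x)) = F(x)$ holds at every $x$ where $g$ is defined; in particular $F(y_0) = F(x_0)$. Also, any rational map defined at $x_0$ is defined on a Zariski open neighborhood of $x_0$, hence continuous there in the strong topology.

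Next I would apply the local structure of $F$. Because $j(F)$ vanishes nowhere, $F$ is a local real analytic diffeomorphism at $y_0$; pick open sets $V \ni y_0$ and $U \ni F(x_0)$ with $F|_V : V \to U$ a diffeomorphism, and write $\psi : U \to V$ for its inverse. By continuity, $W := g_1^{-1}(V) \cap g_2^{-1}(V)$ contains an open neighborhood of $x_0$ on which both $g_1$ and $g_2$ are defined. For $x \in W$, the two conditions $F(g_i(x)) = F(x)$ and $g_i(x) \in V$ together force
\[
g_i(x) = \psi(F(x)), \qquad i = 1, 2,
\]
so $g_1$ and $g_2$ coincide on the nonempty open set $W$. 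Since two everywhere-defined (or merely rationally defined) rational maps that agree on a nonempty open subset of $\R^n$ are identically equal, $g_1 = g_2$.

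There is no serious obstacle in the argument; the only point to be careful about is that ``defined at $x_0$'' really provides a strong-topology neighborhood on which $g_i$ is continuous and into which $F^{-1}$ can be locally taken — this is where the Jacobian hypothesis and the fact (Theorem \ref{Consistency}) that a rational function defined at a point is defined on a Zariski open neighborhood of it are used together.
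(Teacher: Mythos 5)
Your argument is correct and is essentially the paper's proof: both use the local diffeomorphism property of $F$ to force $g(x)=\psi(F(x))$ on a neighborhood of the given point, and then invoke the identity principle for rational maps to conclude equality everywhere. The only cosmetic difference is that you compare two putative elements $g_1,g_2$ rather than showing a single $g$ is determined, but the mechanism is identical.
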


\begin{proof}
Let $a \in \R^n$ be a point at which $g$ is defined, and 
let $b=g(a) \in \R^n$ be its value there. Let $U_a,U_b$ be 
open sets containing $a$ and $b$, respectively, that are mapped 
homeomorphically by $F$, with $g$ defined on $U_a$. 
Since $g$ satisfies $F(g(x))=F(x)$ at points where it is 
defined, $g$ is completely determined on the open set 
$U_a \cap g^{-1}(U_b)$. The components of $g$ are rational 
functions on $\R^n$, each determined by its restriction as a rational 
function to any open subset of $\R^n$. 
\end{proof}

Let $W$ be the complement of $B(F)$ in the domain of $F$. 
Recall, from section \ref{conj}, that $W=\R^n \setminus B(F)$ 
is an open semi-algebraic subset of $\R^n$, that the  same is true  for each of the finitely many connected components $V$ of $W$, and that, 
moreover, each such $V$ is a connected cover of finite degree 
of its image $U=F(V)$ via the map induced by $F$. 

\begin{Lemma}\label{W}
Each $g \in G$ is defined at every point of $W$, and maps $W$ 
homeomorphically onto $W$. 
\end{Lemma}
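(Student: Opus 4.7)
The plan is to extend $g$ analytically from its domain of definition $D_g$ to all of $W$, then invoke Lemma \ref{Analytic} to conclude that $g$ is in fact defined on $W$ as a rational map. The key ingredient is that $F|_W \colon W \to \R^n \setminus A(F)$ is a proper local homeomorphism, which locally at each point of $W$ realizes $F$ as a finite disjoint union of sheets.

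Fix $a \in W$ and set $y_0 = F(a)$. Since $y_0 \notin A(F)$, properness of $F$ over $\R^n \setminus A(F)$ gives a finite fiber $F^{-1}(y_0) = \{b_1, \ldots, b_k\} \subseteq W$ with $a = b_1$, together with an open neighborhood $U$ of $y_0$ and disjoint open neighborhoods $U_i \ni b_i$ such that $F^{-1}(U) = \bigsqcup_i U_i$ and each $F|_{U_i} \colon U_i \to U$ is a homeomorphism; let $h_i = (F|_{U_i})^{-1} \colon U \to U_i$. Choose $U_1$ to be a convex open ball and $U$ small enough that $F(U_1) \subseteq U$. On $U_1 \cap D_g$, the identity $F(g(x)) = F(x) \in U$ forces $g(x) \in \bigsqcup_i U_i$, and continuity makes the index $\iota(x)$, defined by $g(x) \in U_{\iota(x)}$, locally constant.

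The principal technical step is to show that $\iota$ is globally constant on $U_1 \cap D_g$. Given any $a', a'' \in U_1 \cap D_g$, one can choose them generically enough that the straight segment $\sigma$ joining them (which lies in $U_1$ by convexity) meets $D_g$ in a dense subset, available since the complement of $D_g$ is a real algebraic set of positive codimension. Each component of the restriction $\tilde g = g|_\sigma$ is then a one-variable rational function, bounded on $\sigma \cap D_g$ because $g(U_1 \cap D_g) \subseteq \bigsqcup_i U_i$ is a bounded set. A bounded one-variable rational function has no poles, so $\tilde g$ is a polynomial map on $\sigma$ and continuous throughout $\sigma$. The relation $F \circ \tilde g = F|_\sigma$, valid on $\sigma \cap D_g$, extends by continuity to all of $\sigma$, so $\tilde g(\sigma) \subseteq F^{-1}(U) = \bigsqcup_i U_i$. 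Connectedness of $\tilde g(\sigma)$ places it inside a single $U_{i_0}$, hence $\iota(a') = \iota(a'')$. Consequently $g$ coincides on $U_1 \cap D_g$ with the real analytic map $h_{i_0} \circ F$, which is defined on all of $U_1$; Lemma \ref{Analytic} then forces $g$ itself to be defined as a rational map at $a$, with value $h_{i_0}(y_0) = b_{i_0} \in W$.

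This shows $g$ is defined on all of $W$ and $g(W) \subseteq W$. Applying the same argument to $g^{-1} \in G$ yields $g^{-1}$ defined on $W$ with $g^{-1}(W) \subseteq W$. The identities $g \circ g^{-1} = g^{-1} \circ g = \mathrm{id}$ hold on the open dense subset of $W$ on which both sides are defined, and extend to all of $W$ by continuity, so $g|_W$ is a homeomorphism of $W$ onto itself. The main obstacle is the "single branch" step: without exploiting rationality, $\iota$ could in principle jump between different connected components of $U_1 \cap D_g$, and it is precisely the reduction to bounded one-variable rational functions — which must be polynomials — that rules this out.
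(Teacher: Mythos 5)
Your proof is correct and reaches the same two-stage conclusion the paper does (define $g$ everywhere on $W$, then use symmetry of $G$ to get the homeomorphism), but the central technical step is handled by a genuinely different device. The paper works with one connected component $V$ of $W$ at a time, takes a path from a point of definition $a$ to a hypothetical boundary point $b$ of the domain of $g$ in $V$, and lifts the image path $F(c(t))$ through the covering map $V' \to U$; the endpoint $b'$ of the lift gives a local analytic model $h\circ F$ for $g$ near $b$, which contradicts Lemma~\ref{Analytic}. You instead work entirely locally at a single point $a$, trivialize the covering into sheets $U_i$, and face the obstacle that $g$ might a priori jump between sheets on different connected pieces of $U_1\cap D_g$; you kill this by restricting to a generic segment $\sigma$, observing that a one-variable rational function bounded on a dense subset of $\sigma$ has no poles on $\sigma$, hence is continuous there, so $\tilde g(\sigma)$ is connected and lives in a single sheet. (Minor nit: boundedness on $\sigma$ only rules out poles \emph{in} $\sigma$, so $\tilde g$ need not be polynomial — but continuity on $\sigma$ is all you use, so the argument stands.) The paper's path-lifting route is shorter and leans on standard covering-space theory; your segment argument avoids lifting entirely at the cost of the one-variable trick and the need to choose $U_i$ bounded and to perturb $a',a''$ generically, but it is more self-contained and makes the "single sheet" issue — which the paper handles implicitly via uniqueness of path lifting — completely explicit.
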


\begin{proof}
The set of points where $g$ is defined is Zariski open, 
hence it intersects each $V$. Suppose $g$ is defined at $a \in V$, 
but not at $b \in V$. Let $c(t)$ be a continuous curve 
$[0,1] \To V$ wit $c(0)=a$ and $c(1)=b$. Replacing $b$, if 
necessary, by the first point on the curve after $a$ at which $g$ is 
not defined, one can assume that $b$ is a boundary point of the 
points of definition of one or more of the components of $g$, 
with $g$ defined at all the other points on the curve. 
Lift the curve $F(c(t))$ in $U=F(V)$ to a curve starting at 
$a'=g(a)$ in whatever total space $V'$ contains $a'$. 
There is no guarantee either that $V$ and $V'$ are the same 
or that they are different, only that they share the same base 
space $U$. Let $b'\in V'$ be the endpoint of the lifted curve. 
Using two open sets, one containing $b$ and the other $b'$, 
both mapped bianalytically by $F$ onto the same open subset 
of $U$, and employing a slight variation of the argument in 
Lemma \ref{rigid}, it is clear that $g$, and thus each of its 
components, can be analytically extended to $b$. 
By Lemma \ref{Analytic} in section \ref{conj1}, that is a 
contradiction. Therefore, there is no point $b \in V$ at 
which $g$ is not defined, and since $V$ was any connected 
component of $W$, it follows that $g$ is defined on $W$. 
If $a \in W$, then $g(a)\in W$, because $F(g(a))=F(a)$ 
does not belong to $A(F)$. The same considerations apply 
to the inverse element of $g$ in the group $G$, so $g$ is a 
homeomorphism of $W$ onto $W$. 
\end{proof}

\begin{Prop}\label{Gacts}
$G$ acts freely on $W$ as a finite transformation group. 
In particular, no element of $G$, except the identity, has a 
fixed point, and the size of the orbit of any point is the 
number of elements (the order) of $G$. 
\end{Prop}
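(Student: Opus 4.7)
The plan is to establish three things in sequence: (i) $G$ is actually a finite group; (ii) the action on $W$ is free, i.e.\ the only element with any fixed point is the identity; (iii) orbit size equals $|G|$ is then automatic from freeness.

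For finiteness, I would observe that by construction the map $g \mapsto g^*$ identifies $G$ with the opposite group of $G^*$, which is a subgroup of the automorphism group of the field extension $\R(F) \subseteq \R(X)$. That extension is algebraic of finite degree $d$ (this fact is established early in section \ref{conj}), and it is a standard theorem that the automorphism group of any finite field extension has order at most the degree. So $|G| = |G^*| \le d < \infty$. Note that Lemma \ref{W} already guarantees that each $g \in G$ restricts to a homeomorphism of $W$ onto $W$, so $G$ indeed acts on $W$ by homeomorphisms.

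For freeness, suppose $g \in G$ satisfies $g(a)=a$ for some $a \in W$. The identity map $e$ also belongs to $G$, is defined at $a$, and satisfies $e(a)=a$. Both $g$ and $e$ are elements of $G$, both are defined at $a$, and they take the same value there. Lemma \ref{rigid} says that an element of $G$ is completely determined by its value at any one point at which it is defined, so $g = e$. Thus no non-identity element of $G$ has any fixed point in $W$.

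Finally, given freeness, the stabilizer of every point $a \in W$ is the trivial subgroup, so by the orbit-stabilizer theorem the $G$-orbit of $a$ has exactly $|G|$ elements. The main conceptual point is really the rigidity Lemma \ref{rigid}; there is no substantive obstacle here beyond invoking it correctly, and the argument is essentially formal once finiteness of $G$ is noted via the field-extension bound.
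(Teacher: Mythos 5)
Your proof is correct and takes essentially the same approach as the paper, which simply says ``Combine the two preceding lemmas'' — you have spelled out exactly how Lemma \ref{rigid} gives freeness and Lemma \ref{W} gives a well-defined action on $W$, and you have made explicit the finiteness of $G$ (via the degree bound on the field extension), which the paper leaves implicit.
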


\begin{proof}
Combine the two preceding lemmas.  
\end{proof}

\begin{Cor}
The map of $W$ onto $F(W)=F(\R^n)\setminus A(F)$ is 
exactly $d$-to-$1$. 
\end{Cor}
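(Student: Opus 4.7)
The plan is to combine three ingredients: the Galois hypothesis (which governs this whole subsection), the free finite action of $G$ on $W$ from Proposition \ref{Gacts}, and the global bound of $d$ on fiber sizes that was established in section \ref{conj}.

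First I would unpack the Galois assumption. Since $\R(F)\subseteq\R(X)$ is Galois of degree $d$, the automorphism group $G^*$ of $\R(X)/\R(F)$ has order $d$, and hence so does its opposite $G$. Next I would check the identification $F(W)=F(\R^n)\setminus A(F)$. Because $W=\R^n\setminus B(F)$ with $B(F)=F^{-1}(A(F))$, the inclusion $F(W)\subseteq F(\R^n)\setminus A(F)$ is immediate; conversely, if $y\in F(\R^n)\setminus A(F)$ and $y=F(x)$, then $x\notin F^{-1}(A(F))=B(F)$, so $x\in W$ and $y\in F(W)$.

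The core of the proof then compares orbits with fibers. Fix $a\in W$ and consider its $G$-orbit $G\cdot a$. By Proposition \ref{Gacts}, $G$ acts freely on $W$, so $|G\cdot a|=|G|=d$. Since every $g\in G$ satisfies $F\circ g=F$, the entire orbit $G\cdot a$ lies in the fiber $F^{-1}(F(a))$, and in fact in $F^{-1}(F(a))\cap W$. On the other hand, the analysis in section \ref{conj} (via a primitive element of minimal degree $d$) shows that $d$ is a global upper bound for the number of inverse images of any point of the codomain under $F$. Thus
\[
d=|G\cdot a|\;\le\;|F^{-1}(F(a))|\;\le\;d,
\]
which forces equality throughout. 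In particular $F^{-1}(F(a))=G\cdot a\subseteq W$, so the fiber over $F(a)$ consists of exactly $d$ points, all lying in $W$. Since $a\in W$ was arbitrary and $F(W)=F(\R^n)\setminus A(F)$, this proves that $F|_W\colon W\to F(\R^n)\setminus A(F)$ is exactly $d$-to-$1$.

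I do not expect a serious obstacle here; everything follows from already-established results. The only subtle point is making sure one actually has the upper bound $|F^{-1}(y)|\le d$ for every $y$ (not just generically), but this was pinned down by the openness/closure argument on the counting function $m(x)$ in section \ref{conj} using that $\overline{U}=\R^n$.
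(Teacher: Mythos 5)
Your argument is correct and properly fills in the paper's one‑word proof ``Clear.'' The key sandwich
\[
d \;=\; |G\cdot a| \;\le\; |F^{-1}(F(a))\cap W| \;\le\; |F^{-1}(F(a))| \;\le\; d,
\]
using Proposition \ref{Gacts} for the lower bound and the global degree bound from the discussion preceding section \ref{conj1} for the upper bound, is exactly what is needed to show that the $G$‑orbits in $W$ exhaust the fibers of $F$ over $F(W)$; that does not follow from freeness of the action alone, and the paper does not spell it out. One caution, though: your proof uses the Galois hypothesis crucially, to get $|G|=|G^*|=d$, but at the point where this corollary appears the subsection's running hypotheses are only that $F$ is an everywhere defined rational map with nowhere vanishing Jacobian determinant; the term ``Galois'' is formally defined only after Proposition \ref{G=1}, and in between the paper explicitly invokes the Pinchuk maps, which are not Galois. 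Read under those general hypotheses the corollary is actually false: for a Pinchuk map the restriction $W\to F(W)$ is exactly $2$‑to‑$1$, whereas $d=6$ and $|G|=1$. So the Galois hypothesis must be tacitly in force here, as the subsection title suggests, and your reading and proof are the right ones; your proof in fact yields the sharper conclusion that the $G$‑orbits coincide with the full fibers $F^{-1}(y)$ for $y\in F(W)$ and that those fibers lie entirely inside $W$.
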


\begin{proof}
Clear.
\end{proof}

\begin{Prop}\label{G=1}
If $F$ has an inverse, then the identity is the only automorphism 
of $\R(X)$ that preserves every element of $\R(F)$. 
\end{Prop}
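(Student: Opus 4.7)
The plan is to run the proof through the geometric group $G$ rather than directly through the algebraic group $G^*$. By construction $G$ and $G^*$ are anti-isomorphic, so it suffices to show that $G$ is trivial.

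First I would observe that if $F$ has an inverse, then $F:\R^n \to \R^n$ is a homeomorphism, hence proper, so $A(F)=\emptyset$ and $B(F)=F^{-1}(A(F))=\emptyset$. Consequently the open semi-algebraic set $W=\R^n\setminus B(F)$ of the preceding two lemmas is all of $\R^n$. This reduction to $W=\R^n$ is the entire geometric content of the inversion hypothesis, and is what makes the argument go through globally rather than just generically.

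Next, let $g \in G$. By Lemma \ref{W}, $g$ is defined at every point of $W=\R^n$, and by construction $F \circ g = F$ at every point where $g$ is defined. The injectivity of $F$, guaranteed by the existence of an inverse, then forces $g(a)=a$ for every $a \in \R^n$. Hence $g$ agrees with the identity on a Zariski dense set, so $g$ is the identity rational map, and the induced field automorphism $g^*$ is the identity on $\R(X)$.

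Finally, since every automorphism of $\R(X)/\R(F)$ belongs to $G^*$, and $G$ and $G^*$ are opposite groups (in particular have the same cardinality), the triviality of $G$ gives the triviality of $G^*$, which is exactly the statement of the proposition. There is no real obstacle to this argument: the heavy lifting was already done in Lemmas \ref{rigid} and \ref{W} and in Proposition \ref{Gacts}, which established that any would-be nontrivial element of $G$ is forced to be globally defined on $W$ and to act freely there; invertibility simply upgrades $W$ to all of $\R^n$ and forbids any nontrivial free action compatible with $F$.
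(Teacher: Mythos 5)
Your proof is correct and is essentially the paper's own argument spelled out in more detail: the paper's one-line proof ("$F$ maps all the points of an orbit to the same point, so if $F$ is injective then $G$ must be trivial") invokes exactly the same chain---$F\circ g=F$, injectivity forces $g$ to fix points, and the rigidity/freeness from Lemmas \ref{rigid}--\ref{Gacts} then forces $g=e$. Your preliminary reduction $A(F)=\emptyset$, $W=\R^n$ is correct but not actually needed, since the orbit argument only requires $g$ to be defined and fix a single point of $W$.
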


\begin{proof}
$F$ maps all the points of an orbit to the same point, so if 
$F$ is injective then $G$ must be trivial. 
\end{proof}

That is a necessary condition on the extension for the existence 
of an inverse for $F$. By Theorem \ref{extension} in section 
\ref{main}, any {P}inchuk map shows that it is not sufficient.

Call $F$ Galois over $\R$, or just Galois for short, if the extension $\R(X)/\R(F)$ is Galois. If $F$ is defined over 
a subfield $k \subset \R$, similarly define 'Galois over $k$' for  
$F$ and note that it implies that $F$ is Galois over $\R$, 
and that the extensions are of the same degree with canonically isomorphic Galois groups.  If $F$ is Galois, then $G$ is the opposite group of the Galois group. 

\begin{Thm}\label{galcase} ({G}alois case of the RRJC) 
Let $F:\R^n \To \R^n$ be a real rational everywhere defined map 
with nowhere vanishing \J determinant. Suppose that $F$ is 
{G}alois. Then the following are equivalent:
\begin{enumerate}
\item $F$ has a global real analytic inverse, 
\item $F$ has an everywhere defined rational inverse,
\item $F$ is birational, 
\item the {G}alois group is trivial.
\end{enumerate}
\end{Thm}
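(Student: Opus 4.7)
The plan is to prove the four conditions equivalent by establishing the cycle $(1)\Rightarrow(4)\Rightarrow(3)\Rightarrow(2)\Rightarrow(1)$, leaning heavily on results already developed in the paper.

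First, $(1)\Rightarrow(4)$ is essentially a restatement of Proposition \ref{G=1}: if $F$ admits a global real analytic inverse then $F$ is injective, so the action of $G$ on $W$ given by Proposition \ref{Gacts} is free with orbits equal to fibers of $F|_W$, forcing $|G|=1$. Since $F$ is Galois, $G^\ast$ is the full Galois group of $\R(X)/\R(F)$, and triviality of $G$ is equivalent to triviality of that Galois group.

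Next, $(4)\Rightarrow(3)$ is pure field theory: under the Galois hypothesis, the degree of the extension equals the order of the Galois group, so a trivial Galois group forces $\R(F)=\R(X)$, which is exactly the definition of birationality of $F$. Then $(3)\Rightarrow(2)$ is immediate from Theorem \ref{bicase}: if $F$ is birational then it already has an everywhere defined real rational inverse (and in fact a $k$-rational one when $F$ is defined over a subfield $k\subseteq\R$). Finally, $(2)\Rightarrow(1)$ is automatic: an everywhere defined real rational map is defined and real analytic at every point of $\R^n$ (by Lemma \ref{Analytic}, or just because each component is the quotient of two polynomials with nowhere vanishing denominator), so a rational inverse defined on all of $\R^n$ is in particular a global real analytic inverse.

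There is no real obstacle here; the only point that requires attention is the implication $(4)\Rightarrow(3)$, where one must invoke the Galois hypothesis in a nontrivial way. Without it, triviality of the automorphism group does not imply birationality, as is shown vividly by the Pinchuk example of section \ref{main}, where the automorphism group is trivial yet the extension has degree six. The Galois assumption is exactly what is needed to upgrade the necessary condition of Proposition \ref{G=1} to sufficiency.
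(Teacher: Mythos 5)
Your proof is correct and follows essentially the same route as the paper, which likewise establishes the cycle $(1)\Rightarrow(4)\Rightarrow(3)\Rightarrow(2)\Rightarrow(1)$ by invoking Proposition \ref{G=1} for the first implication and Theorem \ref{bicase} for the birational case. The intermediate field-theoretic step and the observation that an everywhere defined rational map is real analytic are exactly what the paper's terse proof leaves implicit.
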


\begin{proof}
Use Proposition \ref{G=1} and Theorem \ref{bicase} (the 
birational case) to prove implications in the order 1,4,3,2,1.
\end{proof}

That is not the hoped for result. A full analogue of the known 
{G}alois case ( polynomial maps with \nz constant \J determinant) would be that the equivalent conditions in the above theorem must be true. 
In other words, that any {G}alois extension in this 
situation is of degree one.

There are some special results in the {G}alois case, 
reported here without proof, 
$B(F)$ is algebraic, not just closed semi-algebraic. 
If $y$ lies in the closure 
of the image of $F$, 
then $y \in A(F)$ if, and only if, it has fewer than 
$d$ inverse images. 
Let $t$ and $R(T)$ be as in the two immediately 
preceding sections. 
Under the same conditions on $Y$, (a) all the roots of 
$R(y)(T)$ are real, and 
(b) if all $d$ roots are distinct, they are the values of 
$t$ at $d$ distinct inverse images of $y$. 
Note that if $F$ has dense image, then any 
$y \in \R^n$ qualifies, and note that only distinct roots 
are needed in (b), with no further requirement 
that $y$ be generic. 
Unfortunately, these special properties shed no light on 
the question of whether the extension must be 
birational.

\begin{Ex}
Let $F:\R^n \To \R^n$ be the map with components the $n$ 
elementary symmetric functions in the variables $\vars$. 
The extension is {G}alois, but the \J condition is not met. 
This example is useful for geometric 
visualization of the group action. 
\end{Ex}

\subsection{Modified conjectures}\label{conj5A}{\ \newline}

Let $F:\R^n \To \R^n$ be an everywhere defined real rational 
map, with a nowhere vanishing \J determinant.  
As in section \ref{conj}, the introductory section on real 
\J conjectures, let $d$ be the degree of the associated 
function field extension 
$\R(X)/\R(F)$, and let $0<N \le d$ be the 
maximum number of inverse images under $F$ of any point in 
the codomain of $F$.

\begin{Lemma}
$d-N$ is even.
\end{Lemma}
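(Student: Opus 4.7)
The plan is to reduce this to the elementary fact that non-real roots of a real polynomial come in complex conjugate pairs, by locating a point at which $N$ is attained and over which a primitive element's minimal polynomial behaves as generically as possible.

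First I would invoke Theorem \ref{better} to choose a primitive element $t \in \R(X)$ for the extension $\R(F) \subseteq \R(X)$ that is defined everywhere on $\R^n$ and integral over $\R[F]$. Let $R(T) \in \R[Y][T]$ be a nonzero constant multiple of the monic minimal polynomial of $t$. Then apply Lemma \ref{roots} to produce a nonempty Zariski open subset $U \subseteq \R^n$ such that, for every $y \in U$, the specialization $R(y)(T)$ has full degree $d$, has distinct roots, and the number of real roots of $R(y)(T)$ equals the number of inverse images of $y$ under $F$; moreover every inverse image $x$ of $y$ arises as $t(x)$ evaluated at a different real root.

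Next I would locate a point $y_0 \in U$ where the maximum $N$ is attained. By the discussion in Section \ref{conj}, $N$ is the maximum of $m(x)$ on the Zariski open set whose closure is all of $\R^n$, so we may choose $x_0$ in that set with $m(x_0) = N$; shrinking to the intersection with $F^{-1}(U)$ (which is nonempty and Zariski open) gives a point $x_0$ with $y_0 := F(x_0) \in U$ and still $m(x_0) = N$, because $N$ is realized generically. Then $R(y_0)(T)$ is a degree $d$ real polynomial whose $d$ roots are distinct, exactly $N$ of which are real (corresponding bijectively to the $N$ points in $F^{-1}(y_0)$ via $t$); the remaining $d - N$ roots are strictly complex.

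Finally, since $R(y_0)(T) \in \R[T]$, its non-real roots form complex conjugate pairs, so $d - N$ is even. The main subtlety is simply making sure that $N$ is actually attained on the Zariski open set $U$ of Lemma \ref{roots}, which is handled by the density argument in Section \ref{conj}; everything else is automatic.
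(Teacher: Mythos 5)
Your proof is correct and follows essentially the same route as the paper: both apply Lemma \ref{roots} to obtain a Zariski open set over which the specialization of the minimal polynomial has distinct roots with exactly $m(x)$ of them real, locate a point of that set where the maximum $N$ is attained (using that the set of points with $N$ inverse images is open, hence meets the dense Zariski open set), and conclude because non-real roots of a real polynomial pair off under complex conjugation. Your appeal to Theorem \ref{better} is harmless but unnecessary, since Lemma \ref{roots} applies to an arbitrary primitive element.
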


\begin{proof}
The set of points in the codomain $\R^n$ with $N$ inverse 
images is open (section \ref{conj}). 
Let $t \in \R(X)$ be a primitive element for the extension, and 
$m(T) \in \R(f)[T]$ its minimal polynomial over $\R(F)$. 
By Lemma \ref{roots} in section \ref{conj4},  
$m(T)$ is defined and has distinct roots, 
with exactly $N$ of them real, 
over a nonempty  open subset of the codomain 
$\R^n$. It suffices to note that at a point of that subset, 
$d-N$ is the number of complex roots, which occur in complex conjugate pairs. 
\end{proof}

\begin{Cor}
If $F$ is invertible, then $d$ is odd.
\end{Cor}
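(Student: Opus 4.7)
The plan is to combine the preceding lemma with the observation that invertibility forces $N=1$. By the lemma just established, $d - N$ is even, where $N$ is the maximum number of inverse images of any point in the codomain. If $F$ is invertible, then in particular $F$ is injective, so every point in the image has exactly one inverse image, and no point has more; consequently $N = 1$. Substituting into the parity statement, $d - 1$ is even, hence $d$ is odd.

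The only subtlety worth noting is that $N$ is defined as the maximum over all points of the codomain, including points not in the image (which have $0$ inverse images). Since an invertible $F$ is, by the conventions used in this paper (bijectivity is part of having an inverse, via the surjectivity theorem for injective rational maps of $\R^n$ to itself, cited as the ST in the proof of Theorem \ref{promoted}), a bijection of $\R^n$ onto itself, every point in the codomain has exactly one inverse image, so $N = 1$ unambiguously. No obstacle is expected; the proof is a one-line consequence of the lemma.
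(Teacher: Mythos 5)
Your proof is correct and matches the paper's (the paper's proof is simply ``$N=1$,'' invoking the preceding lemma that $d-N$ is even). Your expanded version, including the remark that invertibility gives bijectivity so the maximum is indeed $1$, is just a fuller writing-out of the same one-line argument.
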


\begin{proof}
$N=1$.
\end{proof}

That suggests the following 

\begin{Conj*}\label{MRRJC} 
((MRRJC) Let $F:\R^n \To \R^n$ be an everywhere 
defined real rational 
map, whose \J determinant vanishes nowhere on $\R^n$. 
If the associated function field extension is of odd degree, 
then $F$ has a global real analytic inverse. 
\end{Conj*}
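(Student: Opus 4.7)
The plan is to deduce injectivity of $F$; combined with the Bia{\l}ynicki-Birula--Rosenlicht surjectivity theorem cited in Theorem \ref{bicase} and the topological Hadamard reasoning used in Theorem \ref{promoted}, injectivity yields a real analytic inverse. First I would assemble the parity setup: by Corollary \ref{odd:dense} the image $F(\R^n)$ is dense, so Theorem \ref{DI} forces the coimage into $A(F)$ with positive codimension. Choose a primitive element $t$ as in Theorem \ref{better}, everywhere defined and integral over $\R[F]$, and let $R(T)\in\R[Y][T]$ be the associated degree-$d$ polynomial. By Lemma \ref{roots}, the parity lemma preceding the conjecture, and the constancy of the preimage count on each connected component $U$ of $\R^n\setminus A(F)$ (see the discussion following Example \ref{uni}), the number $k_U$ of preimages of any point of $U$ has the same parity as $d$, hence is odd and at least $1$; in particular every such $U$ lies in $F(\R^n)$.

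Next I would attack injectivity through Theorem \ref{InjCrit}. Let $S\subset\R^n$ be the zero set of the vector field $\nabla_Y R$ from Lemma \ref{vf}; Proposition \ref{1:1} then gives injectivity of $F$ on each connected component of $\R^n\setminus S$. The task is to upgrade this patchwise injectivity to global injectivity. Here odd degree is intended to rule out the generic two-to-one sheet swapping exhibited in the Pinchuk case of section \ref{aut}, where the involution $\tau$ interchanging the two preimages over a point on one side of $A(F)$ is available precisely because $d=6$ is even: an odd number of sheets admits no global fixed-point-free involution of this kind. A cleaner potential route would be to pass to the Galois closure $L$ of $\R(X)/\R(F)$, whose degree over $\R(F)$ has only odd prime factors and whose Galois group is solvable by the Feit--Thompson theorem, and then invoke Theorem \ref{galcase} to collapse the tower down to $d=1$.

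The main obstacle is this last reduction. Realizing $L$ as the rational function field of an everywhere-defined real rational map $F' : Y' \to \R^n$ with nowhere-vanishing Jacobian is not automatic: $L$ need not be purely transcendental over $\R$, and even when it is, the Jacobian condition does not obviously transfer to the normal cover. Failing this, a Smith-theoretic argument along the lines used for condition (4) of Theorem \ref{promoted} would demand a free action by a cyclic group of odd prime order on the whole of $\R^n$, but Proposition \ref{Gacts} supplies such an action only on $W = \R^n \setminus B(F)$, so applying the classical fixed-point theorems would first require knowing $A(F) = \emptyset$, which is exactly what one is trying to prove. Breaking this circularity is, I expect, where the real difficulty lies, and is presumably why the statement is presented as a conjecture rather than a theorem.
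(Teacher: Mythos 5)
You should note first that the statement you were asked to prove is presented in the paper as a \emph{conjecture}, not a theorem: the paper offers no proof of the MRRJC, and the whole point of section \ref{conj5A} is to formulate it as an open problem whose hypotheses exclude the Pinchuk counterexamples while remaining nonvacuous. Your proposal correctly recognizes this in its final paragraph, and the partial steps you assemble are all genuinely available in the paper. Corollary \ref{odd:dense} does give dense image, Theorem \ref{DI} then places the coimage inside $A(F)$, and applying the parity argument componentwise via Lemma \ref{roots} shows that each connected component of $\R^n\setminus A(F)$ has an odd, hence positive, number of preimages. Proposition \ref{1:1} supplies injectivity on each component of $\R^n\setminus S$. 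Marshalling all of this as partial evidence is exactly the right instinct.

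The routes you sketch for closing the gap, however, each fail for the reasons you name, and it is worth being precise about why. The heuristic that an odd number of sheets admits no global fixed-point-free involution is suggestive but not an argument: outside the Galois case there is no group action to exploit, and nothing in the hypotheses forces a failure of injectivity to be witnessed by a deck transformation. The Galois-closure idea founders because the closure $L$ is an abstract field extension of $\R(F)$, not in any canonical way the function field of an everywhere-defined nonsingular real rational self-map of a Euclidean space; even if $L$ is rational over $\R$, the resulting map to $\R^n$ need not extend to all of its domain, need not have nowhere-vanishing \J determinant, and certainly need not be a self-map, so Theorem \ref{galcase} cannot be applied to it. And, as you observe, the Smith-theory route used for condition (4) of Theorem \ref{promoted} presupposes a free finite group action on all of $\R^n$, whereas Proposition \ref{Gacts} gives one only on $W=\R^n\setminus B(F)$; removing that restriction already amounts to $A(F)=\emptyset$, which is the conclusion. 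So your proposal does not prove the statement, but you have correctly identified that it cannot, and you have pinpointed the genuine obstruction, which is precisely why the paper states the MRRJC as an open conjecture rather than a theorem.
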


The acronym MRRJC stands for  modified 
rational real \J conjecture. 
This conjecture is not vacuous, as is shown by the examples 
$y=f(x)=x+x^d$ for $d > 1$ odd. The condition that 
$d$ is odd can be replaced by the geometrically more 
natural condition that $N$ is odd. 
These conditions are equivalent and necessary. 
In the RRJC context, 
the grandiose conclusion is equivalent to the simple 
statement that $F$ is injective, or that $N=1$. 
Indeed, injectivity implies not only the existence of 
an inverse, but also that the inverse is both real analytic 
and semi-algebraic, hence a Nash diffeomorphism. 
In sum, the conjecture is that $N$ odd implies $N=1$. 
One piece of evidence in favor of the conjecture is that 
its hypotheses imply, by Theorem \ref{Dense} and 
Corollary \ref{odd:dense}, that the image of $F$ is dense 
in $\R^n$ and its complement, $\R^n \setminus F(\R^)$, 
is contained in a real algebraic strict subset of $\R^n$.

Remark. There is another necessary condition for 
invertibility that applies to the function field extension. 
Namely, by Proposition \ref{G=1} in section \ref{conj5}, 
the automorphism 
group of the extension must be trivial. It has not been included as an additional hypothesis in the MRRJC by deliberate choice, 
partly because it does not, by itself, even exclude 
the Pinchuk counterexamples, whose extensions have 
trivial automorphism groups by Theorem \ref{extension} 
in section \ref{main}.

Turn next to a definition of $N$, suitable for this 
section, in a somewhat more general situation. 
It is a familiar fact that if either the rationality or the 
\J condition is dropped, there may be no finite upper  bound on 
the number of inverse images. Traditional examples are 
$F=(e^xcos(y),e^xsin(y))$ and $F=(x,xy)$. 
For such maps, assign $N$ the symbolic 
value $\infty$, regardless of specifics of the cardinalities 
of various fibers. 
This may occur even if the map is quasifinite, meaning 
each individual fiber is a finite set. 
If there is a finite bound, let $N$ be the least such bound. 
In this way, $N$ is defined for any map 
$F:\R^n \To \R^n$ whatsoever, and if it is finite, then it is 
the (finite) maximum of the (finite) cardinalities of all the 
fibers of $F$.

Two maps, $F$ and $G$, from a topological space $A$ to 
another one $B$, are called topologically equivalent 
if $F = h_B \circ G \circ h_A$, where $h_A$ and $h_B$ are 
homeomorphisms, respectively of 
$A$ to itself and of $B$ to itself. 
In other words, $F$ and $G$ are the same map up to 
coordinate changes in the domain and codomain by 
topological automorphisms. 
Topological stable equivalence for the set of all maps 
$F:\R^n \To \R^n$ in all dimensions $n>0$ is the 
equivalence relation generated by (1) topological 
equivalences, and (2) the equivalence of any map 
$F=(\comps)$, and its extension 
$(f_1,\ldots,f_n,x_{n+1},\ldots,x_m)$
to any larger dimension $m$. 
There are many other types of stable equivalence, such as 
real analytic or polynomial, each characterized by the type 
of automorphisms allowed for (global) coordinate changes. 
Stable equivalence, unqualified, will refer to the 
least restrictive, purely set theoretic, type, 
with all bijections allowed as automorphisms. 
Clearly
stable equivalence preserves $N$, as defined above. 
That is, two stably equivalent maps have the same value 
of $N$, whether finite or $\infty$. 
Of course, this stable equivalence does not preserve 
rationality or even the existence of a \J matrix. 
Nonetheless, if $F$ and $G$ are stably equivalent and 
both are everywhere defined real rational maps with nowhere 
vanishing \J determinant, then they are equivalent as 
far as the conjecture is concerned, since both $N$ odd 
and $N=1$ are preserved.

For brevity, call $F:\R^n \To \R^n$ (1) nondegenerate if 
$j(F)$ is not identically zero, 
(2) nonsingular if $j(F)\ne 0$ everywhere, and (3) a Keller 
map if $j(F)$ is a nonzero constant. 
These terms are meant to imply that $J(F)$, the  \J matrix 
of $F$, exists at 
every point of $\R^n$, and can be applied to any such 
$F$ if the corresponding restriction on 
$j(F)$ is satisfied.
For polynomial stable equivalence, the applicable 
automorphisms are \p maps with \p inverses, making 
it obvious that such equivalence preserves (in both 
directions) each of the above three properties.

There are two classic reductions of the ordinary JC 
to Yagzhev maps \cite{Jagzhev,BCW82} and to \Druz maps \cite{Effective}.
A Yagzhev map is a polynomial map of the form 
$F=X+H$, where $X=(\vars)$, and each 
component of $H$ is a cubic homogeneous polynomial in
the variables $\vars$. 
Yagzhev maps are also called maps of cubic homogeneous type. 
A \Druz map (or map of cubic linear type)
is a Yagzhev map, 
for which the components of $H$ are cubes of linear 
forms ($h_i=l_i^3$). In a departure from the convention in 
some other works, 
these definitions impose no restriction on $j(F)$, 
beyond the obvious $j(F)(0)=1$. 
Note, however, that a Yagzhev map $F=X+H$ is a Keller map 
if, and only if, $J(H)$ is nilpotent, since both assertions are 
just different ways of saying that the formal power series 
matrix inverse of $J(F)$ is polynomial.

Reduction theorem proofs use the strategy of 
transforming an original map into a map 
of the desired form in a succession of steps that preserve 
the truth value of certain key properties (and typically 
increase the number of variables). 

For the JC, $\C$ is usually selected as the ground field and 
the key properties are the Keller property 
and the existence of a polynomial inverse.
But the strategy and specific steps can be applied more 
generally than just to polynomial Keller maps and yields, 
for instance, a reduction of the SRJC to the cubic linear 
case. \cite{Effective}.

Historical Note. 
At the 1997 conference in Lincoln, Nebraska, to honor 
the mathematical work of Gary H. Meisters. it was suggested 
by T. Parthasarathy that the SRJC reduction be attempted 
for the 1994 counterexample of Pinchuk. 
The challenge was taken up by Engelbert Hubbers, and 
in 1999 he demonstrated the existence of a 
counterexample to the SRJC of cubic linear type, 
coincidentally in dimension 1999. 
He started with exactly the specific Pinchuk map of 
section \ref{specific}, used a computer algebra system
to verify a human guided reduction path to a Yagzhev map
in dimension 203, then explicitly computed a
Gorni-Zampieri pairing \cite{GZpairs} 
to a \Druz map in dimension 1999, using sparse matrix 
representations as necessary.
These details are excerpted from a comprehensive 
unpublished note by Hubbers, which he made available.

If the MRRJC is considered only for polynomial maps, it 
becomes the MSRJC, a modified strong real \J conjecture. 
Because of the long standing and continuing interest in 
the SRJC, a separate full statement is warranted.

\begin{Conj*}\label{MSRJC} 
(MSRJC) Let $F:\R^n \To \R^n$ be a real polynomial
map, whose \J determinant vanishes nowhere on $\R^n$. 
Suppose that the associated function field extension 
is of odd degree, 
or, equivalently, that the maximum cardinality of 
the fibers of $F$ is odd. 
Then $F$ has a global real analytic inverse. 
\end{Conj*}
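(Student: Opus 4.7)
The plan is to deduce that $F$ is injective, from which Theorem~\ref{bicase} (together with the Bia{\l}ynicki-Birula--Rosenlicht surjectivity theorem used in its proof) produces an everywhere defined real rational inverse, which is automatically real analytic. Equivalently, the goal is to show that the maximum fiber cardinality $N$ equals $1$, knowing from the lemma immediately preceding the conjecture that $N$ and $d$ share parity, so $N$ is odd.

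First I would apply Theorem~\ref{better} to select a primitive element $t \in \R(X)$ that is everywhere defined and integral over $\R[F]$, with monic minimal polynomial $R(T) \in \R[Y][T]$ of odd degree $d$. Since the leading coefficient of $R$ is then a \nz real constant, $R(y)(T)$ is a genuine degree-$d$ real \p for every $y \in \R^n$ and so has at least one real root; in particular $F(\R^n)$ is dense by Corollary~\ref{odd:dense}. The continuity-of-roots framework of section~\ref{conj4A} supplies $d$ continuous sheet functions $r_1,\ldots,r_d : \R^n \To \C$, an odd number of which are real-valued and the remainder paired by complex conjugation. The discriminant locus $\Delta = \{D = 0\} \subset \R^n$ is a proper real algebraic subset, and by Theorem~\ref{InjCrit} (which applies because the nowhere vanishing \J determinant forces $F$ to be locally injective everywhere) $F$ is already injective on each connected component of $F^{-1}(\R^n \setminus \Delta)$.

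The next step is to promote these local injectivities to global injectivity. Following section~\ref{conj5}, let $G$ be the group of real birational self-maps of $\R^n$ over $F$; it acts freely on $W = \R^n \setminus B(F)$ with orbits of size $|G|$, and $F|_W$ is exactly $|G|$-to-one (Proposition~\ref{Gacts} and its corollary). Since $|G|$ divides $d$, $|G|$ is also odd. The plan is then: (a) upgrade the local picture to the global identity $N = |G|$, reducing to the Galois case of odd degree $|G|$ with nowhere vanishing \J determinant; and (b) deduce $|G|=1$ from a Smith-theoretic rigidity argument on the free $G$-action on $W$, in the spirit of the Byrnes--Lindquist proof recalled in Theorem~\ref{promoted}(4).

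The main obstacle, and the reason the conjecture remains open, is step~(a). The \pk counterexamples show concretely that $|G|$ can be trivial while $N>1$, so oddness of $d$ alone does not force $F|_W$ to be a cover governed by $G$; in the \pk case $d$ is even and the parity hypothesis excludes them, but no mechanism is known that converts oddness of $d$ into triviality of the non-Galois monodromy. Step~(b) is a separate difficulty: the cleanest Smith-theoretic statements apply to free actions on $\R^n$ itself, whereas $G$ only acts on the open semi-algebraic set $W$, and the properness that would let one extend the action to $\R^n$ is, by Theorem~\ref{promoted}(4), equivalent to the very conclusion one is trying to prove. I expect real progress will require a new ingredient that uses the odd-degree real-root count in an essential way near the asymptotic variety $A(F)$ — for instance, a parity obstruction on sheet collisions along $\Delta$ forcing complex-pair annihilations to be balanced by real-pair ones, or a rational-context refinement of Jelonek's $\R$-uniruledness of $A(F)$ sensitive to this parity.
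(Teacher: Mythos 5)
The statement you were asked to prove is stated in the paper as a \emph{Conjecture} (environment \texttt{Conj*}), not as a theorem, and the paper provides no proof of it; the abstract itself says ``the general case of odd degree remains open.'' So there is no proof to compare against, and your honest conclusion that the argument stalls is the correct one, not a defect. What the paper does offer in the vicinity of the conjecture are the \emph{reduction} theorems \ref{reduce-1} and \ref{reduce-2} (reduction to cubic homogeneous type and to the symmetric/Hessian case), plus the accompanying remark that the trivial-automorphism-group condition was deliberately omitted because it does not exclude the Pinchuk examples; your proposal independently recovers that same observation when you note that $|G|$ can be trivial while $N>1$.

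Your preliminary steps are sound and consistent with the paper's machinery: the parity lemma gives $N$ odd; Theorem \ref{better} supplies an everywhere-defined integral primitive element so that $R(y)(T)$ is a genuine degree-$d$ polynomial for every $y$, whence density of the image by Corollary \ref{odd:dense}; and Theorem \ref{InjCrit} (via simplicity of roots off the discriminant locus) yields injectivity on each connected component of the complement of a real algebraic set, just as the paper obtains in Corollary \ref{codimS} and Example \ref{Sex}. Your identification of the two obstacles is also accurate and tracks the paper's own discussion. First, the degree $d$ of the extension and the order $|G|$ of the automorphism group need not coincide when the extension is not Galois, and nothing in the parity hypothesis forces $|G|=N$ or controls the ``non-Galois monodromy''; the paper makes precisely this point in section \ref{conj5}, where the Galois case (Theorem \ref{galcase}) collapses to the already-settled birational case rather than proving the conjecture. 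Second, the Smith-theoretic argument of Byrnes--Lindquist, recalled in the proof of Theorem \ref{promoted}(4), depends on $\R^n$ itself being the total space; $G$ only acts on the open set $W=\R^n\setminus B(F)$, and the properness needed to replace $W$ by $\R^n$ is, via Theorem \ref{promoted}, logically equivalent to the conjecture's conclusion. You have correctly diagnosed the circularity. In short: no gap to report, because there is no proof to reach; your write-up is a fair account of why the MSRJC is open and of what partial tools the paper provides.
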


There are reduction theorems for the M	SRJC parallel to those 
just discussed for the JC. 
To reduce the MSRJC to the cubic homogeneous case, 
it suffices to take 
reduction steps that preserve $N$ and to transform only 
nonsingular maps, in which case (2) below can be stated more 
simply as the equality of the maximum cardinality of the 
fibers of $F$ and $G$. 
The proof of the following theorem 
basically follows \cite{Effective}, with simplifications 
suggested by Michiel de Bondt.

\begin{Thm}\label{reduce-1}
There is an algorithm that transforms a nondegenerate, 
polynomial map 
$F:\R^n \To \R^n$ into a map $G:\R^m \To \R^m$ of cubic homogeneous type, where $m$ is generally much larger 
than $m$, such that
\begin{enumerate}
\item $F$ is nonsingular if, and only if, $G$ 
is nonsingular, and
\item a finite bound on the cardinality of fibers 
holds for $F$ if, and only if, the same bound 
holds for $G$. 
\end{enumerate}
\end{Thm}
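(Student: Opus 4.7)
My plan is to adapt the classical chain of reductions from Bass--Connell--Wright and \Druz, as simplified by de Bondt in \cite{Effective}, so that every intermediate transformation preserves not only the nonsingularity of the \J determinant but also the maximum cardinality $N$ of the fibers of $F$. I would first isolate a short list of elementary operations on polynomial maps and verify, one operation at a time, that (1) nonsingularity transfers in both directions and (2) a finite bound on fiber sizes transfers in both directions; the theorem then follows by composing these operations. The algorithm itself consists of running the usual reduction, with an explicit bijection on fibers recorded at every step.

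The elementary operations I would use are: (a)~translation of the domain to move some point $p$ with $j(F)(p) \ne 0$ to the origin; (b)~pre- and post-composition with invertible real linear maps so that $F(0) = 0$ and $J(F)(0) = I$; (c)~a homogenization step that adjoins an auxiliary variable $t$, replacing $F(X)$ by a homogenization $\tilde F(X,t) = (t^d F(X/t),\,t)$ whose restriction to $t = 1$ recovers $F$; and (d)~the key degree-reduction step. For (d), given $F = X + H$ with $H$ of top degree $d \ge 4$, I introduce one new variable $y_{ij}$ for each degree-two monomial $x_ix_j$ appearing in the top part of $H$, and form a map $G$ on the larger space by substituting $y_{ij}$ for the chosen factor $x_ix_j$ in every high-degree monomial and appending the coordinate equations $y_{ij} - x_ix_j$ as new components. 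The new map $G$ has strictly smaller top degree, and a direct block-triangular computation of $J(G)$ shows $j(G) = \pm\, j(F)$, settling (1) for this step. Iterating (d) drives the degree down to $3$, after which one last homogenization places the result in cubic homogeneous form.

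The main obstacle, I expect, is verifying (2) for the degree-reduction operation. In one direction it is essentially bookkeeping: any point in a fiber of $G$ over a codomain point $(y,z)$ restricts to a point in the fiber of $F$ over $y$, with the extra coordinates then forced by $y_{ij} = x_ix_j$. The reverse direction is more delicate: I must show that at the codomain points $(y,z)$ realizing the maximum fiber size of $G$, the new block $z$ is matched to $y$ in a unique way, so that no fiber of $G$ can exceed the corresponding fiber of $F$. This reduces to showing that the image of the new block of coordinates under $G$ is exactly the graph of the substitution polynomial $Q(X) = (x_ix_j)_{ij}$, an essentially algebraic observation, and to ruling out nongeneric codomain points producing unexpectedly large fibers, which follows from the openness of the set where the fiber size is maximal (see section \ref{conj}) combined with the finite bound inherited from $F$. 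Once this bijection between fibers is proved for a single application of (d), it propagates through the iteration, and combining it with the routine bookkeeping for (a)--(c) yields both halves of the theorem.
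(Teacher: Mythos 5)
Your overall architecture (normalize, reduce degree, homogenize with an auxiliary variable) matches the paper's, but two of your operations are not correct as stated, and the degree-reduction step is a genuinely different scheme that in fact fails to preserve fiber cardinalities.

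\textbf{The degree-reduction step (d) does not preserve $N$.} Your substitution replaces a quadratic monomial $x_ix_j$ by a new variable $y_{ij}$ and appends the component $y_{ij}-x_ix_j$. This is \emph{not} a polynomial stable equivalence: the fiber of $G$ over $(w,z)$ is in bijection with the fiber over $w$ of the \emph{perturbed} map $F_z(X):=\hat F(X,Q(X)+z)$, and $F_z$ can have larger fibers than $F=F_0$. Concretely, take $n=1$ and $F(x)=x+x^4$; your rule gives $G(x,y)=(x+y^2,\,y-x^2)$. Here $N(F)=2$ (one critical point), but $F_z(x)=x+(x^2+z)^2$ has derivative $4x^3+4zx+1$, which for $z\ll 0$ has three real roots, so $F_z$ has four preimages over suitable $w$, and $N(G)\ge 4>2=N(F)$. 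Both $F$ and $G$ are nondegenerate, so (2) is violated. The openness argument you invoke from section~\ref{conj} cannot rescue this: it requires nowhere-vanishing $j$, whereas the theorem only assumes nondegeneracy, and in any case openness of the maximal-fiber locus is consistent with the maximum going up. The paper's degree reduction avoids this entirely by removing one product $ab$ at a time via $(f_1-(y+a)(z+b),f_2,\ldots,f_n,y+a,z+b)$: the two new variables enter \emph{linearly}, so the step factors as a stabilization sandwiched between polynomial automorphisms, and both (1) and (2) are automatic.

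\textbf{The homogenization (c) has the wrong formula.} You propose $\tilde F(X,t)=(t^dF(X/t),t)$. After normalizing to $F=X+Q+C$ (so $d=3$) this is $(t^2X+tQ(X)+C(X),\,t)$, whose Jacobian at $(0,0)$ is $\det J(C)(0)=0$; so a nonsingular $F$ produces a singular $\tilde F$, breaking (1). It also breaks (2): the slice $t=0$ is sent by $(X,0)\mapsto(C(X),0)$, and the fiber over $(0,0)$ is the real zero set of $C$, which is unbounded (hence infinite) whenever $C$ has a nonzero real zero. The correct replication is $(t^{-1}F(tX),t)=(X+tQ(X)+t^2C(X),\,t)$, which restricts to the identity at $t=0$ and to an affine conjugate of $F$ on each slice $t=a\ne 0$. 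This is also the only step of the paper's proof that is \emph{not} a stable equivalence, and it is handled by exactly this hyperplane decomposition. Note too that even the corrected replication does not land in Yagzhev form directly (the term $t^2C$ has degree $5$); the paper needs one further polynomial stable equivalence, $G=A_1\circ(X+tQ+t^2C,Y,t)\circ A_2$ with $A_1=(X-t^2Y,Y,t)$ and $A_2=(X,Y+C,t)$, yielding $(X-t^2Y+tQ,\,Y+C,\,t)$. Your proposal omits this step.

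In short: the paper makes every step except one a polynomial stable equivalence, which renders (1) and (2) automatic, and handles the one exceptional step by an explicit fiberwise affine conjugacy. Your plan replaces the equivalence-based degree reduction with a Tseitin-style substitution that is not an equivalence and demonstrably increases $N$, and uses the $t^dF(X/t)$ homogenization that degenerates at $t=0$; both would need to be replaced by the constructions above.
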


\begin{proof}
$N=\infty$ may occur 
 for singular maps, but that does not occasion
any problems. 
In each step below a map $F$ is replaced by a map $G$, 
which becomes the new $F$ for the next step. At each 
step both $F$ and $G$ are nondegenerate, and they 
satisfy both (1) and (2) above, whether singular or not.  
For all but one step, that is true automatically, because 
the step is an equivalence or stable equivalence using 
polynomial automorphisms.

Step 1. Lower the degree. 
Suppose $F=(\comps)$. 
$F$ is polynomially stably equivalent to 
$(f_1 - (y+a)(z+b),f_2,\ldots,f_n,y+a,z+b)$, 
where $a,b$ are polynomials that depend only on 
$\vars$. Thus, if a term of $f_1$ has the form 
$ab$, with $\deg(a)>1$ and $\deg(b)>1$, it 
can be removed at the cost of introducing two new 
variables and some terms of degree less than $\deg(ab)$.
Repeating this for terms of maximum degree until 
there are no more maximal degree 
terms of the specified form in any component, one 
finally obtains a polynomial map $G$ (in a generally 
much higher dimension), all of whose terms are of degree 
no more than three.
This is a standard algorithm \cite{BCW82,ArnoBook}.  
There is flexibility in the choice of term to 
remove next, and one can opportunistically remove a 
product $ab$ that is not a single term, making choices to 
reach a cubic map more quickly. This step is a polynomial 
stable equivalence.

Step 2. Normalize. 
 $F$ is now cubic and (still) nondegenerate. 
Let $n$ be the current dimension. 
Choose $x_0 \in \R^n$ with $j(F)(x_0) \ne 0$. 
After suitable translations, 
$(J(F)(x_0))^{-1}F$ becomes a cubic map $G$, such  
that $G(0)=0$ and $G'(0)=J(G)(0)$ is the identity 
matrix $I$. This step is an affine (in the vector space sense) 
equivalence.

Step 3. Replicate. 
Now $F=X+Q+C$, where $Q$ and $C$ are, respectively, 
 the quadratic and cubic homogeneous components of $F$. 
Let $t$ be a new variable, and put 
$G=(X+tQ+t^2C,t)$. 
This is the step at which nondegeneracy, (1), and (2) will be 
explicitly verified. Let $x$ be any point of $\R^n$. 
For $t \ne 0$, $G(x,t)=(t^{-1}F(tx),t)$, so 
$j(G)(x,t)=j(F)(tx)$, and by continuity of polynomials, 
$j(G)(x,0)=1$. That yields nondegeneracy and  (1). 
For (2), identify $\R^n$ with
 $\R^n \times \{0\}\subset \R^{n+1}$, and
observe that each hyperplane $t=a$ is mapped to itself 
by a map affinely equivalent to $F$ for $a \ne 0$, and 
by the identity for $a=0$.
This step is generally not a stable equivalence.

Step 4. Final step. 
Now $F=(X+tQ+t^2C,t)$, with $Q$ quadratic 
homogeneous and $C$ cubic homogeneous, 
and both independent of $t$. 
Define  two polynomial automorphisms
$A_1,A_2$ in$X,Y,t$, where $Y$ is 
a sequence of $n$ additional variables, 
by $A_1=(X-t^2Y,Y,t)$ and $A_2=(X,Y+C,t)$. 
Then $G=A_1 \circ (X+tQ+t^2C,Y,t) \circ A_2$ is 
the map of cubic homogeneous type 
$(X-t^2Y+tQ,Y+C,t)$. 
This step is a polynomial stable equivalence. 
\end{proof}

Remark. The theorem and proof are valid over $\C$ as well as 
over $\R$, and, indeed, more generally. There are also a
number of preservation results not stated in the theorem. 
For instance, 
$F$  is a Keller map if, and only if, 
$G$ is a Keller map. 
In particular, applying steps 1 through 4 to a Pinchuk map 
yields a Yagzhev map $G$, for which $j(G)$ is not constant 
and $J(G)$ is not unipotent.

On inquiry, both Gianluca Gorni and Michiel de Bondt 
confirmed that Gorni-Zampieri pairing preserves $N$, and 
sent proofs. Since any Yagzhev map can be paired to a 
\Druz map, and nonsingularity is preserved 
in both directions \cite{GZpairs},
there is a further 
reduction of the MSRJC to the cubic linear case.

More recently, reductions of the ordinary JC to the 
symmetric case have been considered, primarily over 
$\R$ and $\C$. 
Let $k$ denote a field of characteristic zero. 
In the JC world  a polynomial map 
$F:k^n \To k^n$ is often called symmetric, in a 
startling abuse of language, if $J(F)$ is a symmetric 
matrix. In that case, $F$ is the gradient map of a polynomial 
function $h:k^n \To k$ and $J(F)$ is the Hessian matrix of second 
order partial derivatives of $h$. So in the symmetric case, 
the JC becomes the Hessian conjecture (HC), namely that gradient 
maps of polynomials with constant nonzero Hessian 
determinant have polynomial inverses. 
In \cite{Meng}, Guowu Meng proves, among many other 
results, the equivalence of the JC and the HC, using what 
he refers to as a trick. Meng's trick is the construction 
featured in the proof below, and works over any $k$. 
In \cite{SymmetricCase}, Michiel 
de Bondt and Arno van den Essen prove a more targeted 
reduction over $\C$, namely to symmetric Keller Yagzhev maps. 
The reduction process involves the use of $\sqrt{-1}$, and 
if applied to a real Keller map may yield a Yagzhev map 
that is not real. Interestingly, they later show 
that all complex symmetric Keller \Druz maps 
have polynomial inverses \cite{SymmetricDmaps}. 

The following theorem 
reduces the entire MRRJC, not just the MSRJC, to 
the symmetric case.

\begin{Thm}\label{reduce-2}
Any nonsingular $\mathcal{C}^2$ map $F:\R^n \To \R^n$,
can be extended to a $\mathcal{C}^1$ 
nonsingular map $G:\R^{2n} \To \R^{2n}$ , 
such that 
\begin{enumerate}
\item the \J matrix of $G$ is symmetric, 
\item $F$ is an everywhere defined real rational map 
if, and only if, $G$ is, 
\item $F$ is polynomial if, and only if, $G$ is, and
\item a finite bound on the cardinality of fibers 
holds for $F$ if, and only if, the same bound 
holds for $G$. 
\end{enumerate}
\end{Thm}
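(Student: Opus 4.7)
The plan is to apply Meng's trick. Introduce $n$ auxiliary variables $y=(y_1,\ldots,y_n)$ and define
\[
G(x,y) \;=\; \bigl(J(F)(x)^{T} y,\; F(x)\bigr) \;\in\; \R^n \times \R^n.
\]
The guiding idea is that $G$ is (after a coordinate reordering) the gradient of the scalar pairing $h(x,y)=y^{T}F(x)$, so $J(G)$ will come out as the Hessian of $h$ and hence be symmetric automatically. With this single formula the entire theorem reduces to a short direct verification of items (1)--(4), so I do not anticipate any real obstacle: the work is bookkeeping, not ideas.

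First I would compute $J(G)$ in block form to obtain
\[
J(G)\;=\;\begin{pmatrix} H(x,y) & J(F)(x)^{T}\\ J(F)(x) & 0 \end{pmatrix},
\]
where $H(x,y)_{ik}=\sum_{j} y_{j}\,\partial^{2}F_{j}/\partial x_{i}\partial x_{k}$. Symmetry of $H$, and hence of $J(G)$, is precisely Clairaut's theorem on equality of mixed partials; this is the one place where the $\mathcal{C}^{2}$ hypothesis is genuinely used, and it settles (1). The entries of $J(G)$ are continuous in $(x,y)$, so $G\in\mathcal{C}^{1}$ as required by the statement.

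Second, I would handle nonsingularity by a single block row swap: interchanging the two block rows of $J(G)$ produces a block lower triangular matrix with diagonal blocks $J(F)$ and $J(F)^{T}$, giving $\det J(G)=(-1)^{n}(\det J(F))^{2}$. This is nowhere zero iff $\det J(F)$ is, establishing the nonsingularity equivalence in both directions. For item (4), the equation $G(x,y)=(u,v)$ forces $x\in F^{-1}(v)$ and then, because $J(F)(x)$ is invertible under the nonsingularity hypothesis, uniquely determines $y=(J(F)(x)^{T})^{-1}u$. So the fiber of $G$ over $(u,v)$ is in canonical bijection with the fiber of $F$ over $v$, and the maximum fiber cardinalities agree exactly (including the case where one, hence both, are infinite).

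Third, for (2) and (3) both directions are immediate. If $F$ is polynomial, then its first-order partials are polynomial, hence $J(F)^{T}y$ is polynomial in $(x,y)$ and so is $G$. If each component of $F$ is written as a reduced fraction $p/q$ with $q$ nowhere vanishing, then $\partial(p/q)/\partial x_{i}=(p_{x_{i}}q-p\,q_{x_{i}})/q^{2}$ has denominator $q^{2}$ nowhere vanishing and so is again everywhere defined on $\R^{n}$, whence $G$ is everywhere defined rational. The converses of both statements are trivial because $F$ occurs verbatim as the second block of $G$. The only points that deserve even a moment's care are the appeal to Clairaut in step one and the fact that step two produces a genuine bijection of fibers (not just an inequality of bounds), which is what yields $N(G)=N(F)$ on the nose rather than merely $N(G)\le N(F)$.
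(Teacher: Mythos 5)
Your proof is correct and follows essentially the same route as the paper: both are Meng's trick, defining $G$ as the gradient of the pairing $h = \sum_i v_i f_i$, reading off symmetry from the Hessian, computing $j(G)=(-1)^n(j(F))^2$ from the block structure with a zero $n\times n$ block, and obtaining item (4) from the canonical fiber bijection supplied by invertibility of $J(F)(x)$. The only difference is cosmetic (you place the Hessian block in the upper left and $F$ as the second component, while the paper reverses both), and your parenthetical ``after a coordinate reordering'' is actually unnecessary since $\nabla h = (J(F)^T y,\, F(x))$ already in your ordering.
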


\begin{proof}
Suppose $F=(\comps)$, for twice continuously 
differentiable functions 
$f_i$ in the variables $\vars$. Use coordinates 
$v_1,\ldots,v_n,\vars$ on $\R^{2n}$. Define a real valued 
function on $\R^{2n}$ by 
$h(v,x)=v_1f_1+\cdots+v_nf_n$. 
Let $G$ be the gradient of $h$. 
Then $G(v,x)=(F(x),v\cdot F'(x))$, where $\cdot$ 
denotes a vector matrix product and $F'$ denotes 
$J(F)$, the \J matrix of $F$. 
Viewing $G$ as a map from $\R^{2n}$ to $\R^{2n}$, 
it is $\mathcal{C}^1$ and also clearly satisfies  (2) and (3). 
It satisfies (1) because $J(G)$ is the Hessian matrix of 
a $\mathcal{C}^2$ function. 
To show that $G$ is nonsingular, just note that $J(G)$ has 
a leading $n$ by $n$ block of zeros, flanked 
on the right by $F'$ and below, therefore, by 
the transpose of $F'$, and so 
$j(G)(v,x)=(-1)^n(j(F)(x))^2$. 
Now $G(v,x)=(w,y)$ if, and only if, 
$F(x)=y$ and $v \cdot F'(x)=w$. Since $F'(x)$ is an 
invertible matrix at any point $x$, there is a bijection 
between the inverse images of a point $(w,y) \in \R^{2n}$ under 
$G$ and the inverse images of $y$ under $F$, and that 
clearly establishes (4). 
\end{proof}

Remark. Application to the JC involves noting that 
$F$ is Keller if, and only if, $G$ is, and that $F$ has a 
polynomial inverse if, and only if, $G$ has.

\section{Appendix - supplemental data}\label{sup}

This final section supplies additional data about
the map that is at the heart of this article, namely
the \pk map $F$ of total degree $25$ defined and described in section \ref{specific}. It includes a discussion of how the geometric behavior was determined, equations for the asymptotic variety as a \p curve, and complete details of the minimal \p of section \ref{ext}. 

The key to the geometry is the following table.
The table shows that the number of connected components of a level set $P=c$ can vary from $2$ to $5$.
That number and the range of $Q$ on each connected component can be found by
parametrizing the locus of zeros of
$P-c$ or of  any factors, expressing $q$
as a function of the parameter, and taking limits.
A bit tricky, but important, is the fact that $q^+ <q^-$
for every line of the table in which they appear.
Since $j(F)$ vanishes nowhere, $Q$ is monotone on each connected component of a level set.
The description of the number of inverse images of various points given in section \ref{specific} is then easily verified.

\begin{table}[htb]
\begin{center}
\begin{tabular}{||c||c||}
\hline
$P=c$  & Ranges of Q on the components \\
\hline \hline
$c>0$ & $(-\infty,q+),\; (q+,q-),\; (q-,+\infty),\; (-\infty,+\infty)$ \\
\hline
$c=0$ & $(0,208),\; (-\infty,0),\; (0,+\infty),\; (-\infty,0),\; (208,+\infty)$\\
\hline
$-1<c<0$ & $(-\infty,\; q+),\; (q+,\; +\infty),\; (-\infty,\; q-),\; (q-,\; +\infty)$ \\
\hline$c=-1$ & $(-\infty,-163/4),\; (-\infty,-163/4),\; (-163/4,+\infty),\; (-163/4,+\infty)$ \\
\hline
$c<-1$ & $(-\infty,+\infty),\; (-\infty,+\infty)$ \\
\hline \hline\multicolumn{2}{|| c ||}{Legend: $(a,b)$ denotes the
open interval from $a$ to $b$, with $a < b$;} \\
\multicolumn{2}{|| c ||}
{$q+$ ($q-$) = the value of $Q$ at $h =-1+\sqrt{1+c}$ (resp., $-1-\sqrt{1+c}$);} \\
\hline \hline
\end{tabular}
\end{center}
\caption{Ranges of $Q$ on level sets $P=c$ for Pinchuk's map}
\end{table}

Remark. An equivalent table appeared in \cite{PPR}, but a number of incorrect conclusions about $F$ were drawn from it.
The rational parametrization for level sets $P=c$, with $c \ne 0$ and $c \ne -1$, also appeared, unfortunately with a typographical error.
However, the author used  the correct parametrization in deriving the table.
There were corrections in the unpublished manuscript 
\cite{Picturing}
and in \cite{PPRErr}.
The following parametrizations were also used in \cite{PPR}, but not given explicitly there.
For $P=-1$ the parametrizations are 
$x=-t^{-1}-t^{-2},y=-t^2$ for $t \ne 0$ and
$x=-s^2,y=-s^{-2}+s^{-3}-s^{-4}$ for $s \ne 0$.
For $P=0$ they are
$x=-t^{-1},y=-t-t^2$ for $t \ne 0$ (two components) and
$x=-(h+1)h^{-1}(h+2)^{-2},y=-h(h+1)(h+2)^2$ for $h \notin \{0,-2\}$ (three components).


The following details about the equations defining $A(F)$ are
reproduced from \cite{aspc}.
$A(F)$ has the bijective polynomial parametrization by  $s \in \R$:
\begin{equation*}
P(s) = s^2 - 1
\end{equation*}
\begin{equation*}
Q(s) = -75s^5 +
\frac{345}{4}s^4 - 29s^3 + \frac{117}{2}s^2
 - \frac{163}{4}
\end{equation*}
and that its points satisfy the minimal \p equation
\begin{equation*}
(Q -(345/4)P^2 -231P -104)^2 = (P+1)^3(75P + 104)^2.
\end{equation*}
These  equations allow the easy computation of the
earlier mentioned points $a$ and $b$ at which the line $P=3$ intersects $A(F)$ (take $s = \pm 2$), and of the point
$(-104/75,-18928/375)$ (approximately $(-1.38,-50.47)$)
in the Zariski closure of $A(F)$ that does not lie on the curve $A(F)$ itself.
Note that for $P=c \ge -1$, $s=h+1$, where $h=-1\pm\sqrt{1+c}$.
This $s$ has nothing to do
with the $s$ used above to parametrize two components of
$P=-1$.

In section \ref{ext}  a \p $R(T)$ was defined, but not fully written out.
It has degree $6$ in $T$, coefficients in $\Q[P,Q]$, and $h$
as a root in $\R[x,y]$.
It was shown in Proposition \ref{R=cm} that $R(T)=(197/4)m(T)$,
where $m(T)$ is the monic minimal \p of $h$ over both $\R[P,Q]$ and $\R(P,Q)$.
Straightforward computations show that
\begin{align}\label{fullR}
R(T) &=(197/4)T^6 + (104 -(363/2)P)T^5
+(63-421P+(825/4)P^2)T^4\tag{6}\\
 &+(-306P+510P^2-75P^3)T^3
+(-Q+412P^2-195P^3)T^2\notag\\
 &+(2PQ-170P^3)T -P^2Q,\notag
\end{align}
and  this formula makes it trivial to evaluate the effect of setting
$P$ and/or $Q$ equal to zero.

The following  partial derivatives of $R(T)$ were used, 
but not fully written out, in the example in section \ref{conj4B}.

\begin{align*}
\partial R/\partial P &=
(-363/2)T^5+(-421+(825/2)P)T^4 \\
&+(-306+1020P-225P^2)T^3+(824P-585P^2)T^2 \\
&+(2Q-510P^2)T -2PQ
\end{align*}

\begin{align*}
\partial R/\partial Q &=
 -T^2 +(2P)T -P^2 = -(T-P)^2
\end{align*}

If one sets $P=T$ in the expression for $\partial R/\partial P$ above, then the terms of degrees $0$ and $1$ in $T$ drop out, 
and the result is $T^3$ times the following quadratic polynomial
in $T$ alone. 

\begin{align*}
 &T^2(-363/2+825/2-225)\\
&+T(-421+1020-585)\\
&+(-306+824-510)\\
&= 6T^2 + 14T + 8
\end{align*}

The same example postponed to this appendix the 
verification of the location of the 
three component curves 
of $B(F)$ relative to the branches of $S$.

The image of the $y$-axis is easily shown to be the straight line 
$4Q=200P+33$. The line contains the points $(0,33/4)$ and 
$(-1,-167/4)$ of the $(P,Q)$-plane, which 
lie, respectively, above and below the points $(0,0)$ and 
$(-1,-163/4)$ of $A(F)$. 
So the line crosses $A(F)$ between those points. 
 Since the crossing point has only one inverse 
image, the $y$-axis intersects the component curve of   
$B(F)$ on which $P$ is bounded above. 
Since $P=y$ on the $y$-axis, the intersection point 
lies in the 'top' region.

The rational parametrization of the level set $P = 3$, 
described in some detail near the beginning of section \ref{ext}, 
is given,   in part, by the curve $(x(h),y(h))$ for $h>3$. 
The image of that  
curve is the entire line $P = 3$, which crosses $A(F)$ twice. 
So the curve itself crosses both the other 
components of $B(F)$, by the same reasoning as before.  
At $h=4$, the parametrization yields the point
$A=(-5/441, 441(-17))$ of the $(x,y)$-plane. 
The vertical line $x=-5/441$ intersects three of the 
four branches of $S$, each at a single point, in the order, 
from top to bottom, of $x=1/y+1/\sqrt{-y}$ ($y<0$) first, 
then $y=1/x-1/x^2$ ($x<0$), and finally
$x=1/y-1/\sqrt{-y}$ ($y<0$). 
The point $A$ is between the first two points of intersection, 
and the two component curves of $B(F)$ at issue must lie 
in the same region of $S^c$ as $A$. 
Interestingly, the just added curve and the $y$-axis do not 
intersect, even though their images obviously do.


\end{document}